\documentclass[10pt]{amsart}
\usepackage{tikz-cd,enumerate,ulem}
\usepackage{mathtools}
\usetikzlibrary{arrows}

\usepackage[utf8]{inputenc}
\usepackage{xspace}

\usepackage[english]{babel}
\usepackage[T1]{fontenc}
\usepackage{amsmath,amssymb}

\usepackage[a4paper,top=3cm,bottom=2cm,left=3cm,right=3cm,marginparwidth=1.75cm]{geometry}

\usepackage{amsthm}
\usepackage{amsfonts}
\usepackage{graphicx}
\usepackage[colorinlistoftodos]{todonotes}
\usepackage[colorlinks=true, allcolors=blue]{hyperref}
\usepackage{thmtools}
\usepackage{cleveref}
\usepackage{url}
\newtheorem{theorem}{Theorem}[section]
\newtheorem{thm}[theorem]{Theorem}

\newtheorem{proposition}[theorem]{Proposition}

\newtheorem{lemma}[theorem]{Lemma}

\newtheorem{example}[theorem]{Example}

\newtheorem{lmm}[theorem]{Lemma}

\newtheorem{prop}[theorem]{Proposition}
\newtheorem{cor}[theorem]{Corollary}

\theoremstyle{definition}

\newtheorem{rem}[theorem]{Remark}
\newtheorem{remark}[theorem]{Remark}

{
      \theoremstyle{plain}
      
  }
  {
      \theoremstyle{plain}
      
  }

\DeclareMathOperator{\FPdim}{FPdim}

\usepackage{enumitem}
\setlist[itemize,1]{label={--\,}}
\setlist{leftmargin=8mm}

\newtheorem*{fact*}{Fact} 
\newcommand{\Z}{{\mathbb{Z}}}

\newcommand{\Q}{{\mathbb{Q}}}

\newcommand{\ccirc}{\kern0.5ex\vcenter{\hbox{$\scriptstyle\circ$}}\kern0.5ex}

\newtheorem*{theorem*}{Theorem}
\newtheorem*{remark*}{Remark}

\numberwithin{equation}{section}

\newcommand{\poubelle}[1]{}

\title[Vanishing fusion rings of rank four]{Classification of vanishing fusion rings of rank four}
\author{Jingcheng Dong}
\address{College of Mathematics and Statistics, and Center for Applied Mathematics of Jiangsu Province, Nanjing University of Information Science and Technology, Nanjing 210044, China}
\email{jcdong@nuist.edu.cn}
\author{S\'ebastien Palcoux}
\address{Beijing Institute of Mathematical Sciences and Applications, Huairou District, Beijing 101408, China}
\email{sebastienpalcoux@gmail.com}
\author{Arnaud Plessis}
\address{Beijing Institute of Mathematical Sciences and Applications, Huairou District, Beijing 101408, China}
\email{plessis@bimsa.cn}
\date{}

\begin{document}
\begin{abstract}
A fusion ring is said to be vanishing if one of its multiplication matrices has determinant zero. 
We classify all vanishing fusion rings of rank four. 
We then provide a more explicit classification in the integral case, which leads to new consequences: there are infinitely many nonisomorphic simple integral vanishing fusion rings of rank four and no simple integral fusion ring of rank four is $1$-Frobenius. We also classify all Burnside fusion rings of rank four.  
\end{abstract}
\maketitle

\tableofcontents

\section{Introduction}
We start this paper by introducing the terminology that we will use throughout the paper. 
Let $\mathcal R$ be a free $\mathbb Z$-module with basis
$\{X_1,\ldots,X_r\}$ and multiplication
\[ X_iX_j=\sum_{k=1}^r N_{ijk}X_k, \] where $N_{ijk}\in\mathbb Z_{\geq0}$ are the \textit{structure constants} of the multiplication. 
Given an index $i$, the \textit{multiplication matrix} of $X_i$ is $N_{X_i}=(N_{ijk})_{1\leq j,k\leq r}$, where 
$j$ indexes rows and $k$ indexes columns.

The \emph{basic Frobenius--Perron dimension} $\FPdim(X_i)$ of $X_i$ is the spectral
radius of $N_{X_i}$ and the \textit{global Frobenius--Perron dimension} $\FPdim(\mathcal{R})$ of $\mathcal{R}$ is the sum of all basic Frobenius--Perron dimensions squared.  
We say that $\mathcal{R}$ is \textit{integral} if $\FPdim(X_i)$ is an integer for all $i$.

We now introduce the following four axioms:
\begin{itemize}
\item (Associativity) $\forall i,j,k$, one has $X_i(X_jX_k)=(X_iX_j)X_k$; 
\item (Unit) $\forall i$, one has $X_1X_i=X_iX_1=X_i$; 
\item (Dual) $\forall i, \exists! i^*\in\{1,\dots,r\}, \forall j$ one has $N_{ij1}=N_{ji1}=\delta_{i^*j}$, where $\delta_{lm}$ is the Kronecker symbol; 
\item (Frobenius reciprocity) $\forall i,j,k$, one has $N_{ijk}=N_{i^*kj}=N_{kj^*i}$. 
\end{itemize}

The free module $\mathcal{R}$ is called \emph{self-dual} if $i^*=i$ for every $i$ and is called a \textit{fusion ring} if the four axioms above hold ($N_{X_1}$ is then the identity matrix and $N_{X_{i^*}}=N_{X_i}^T$ for all $i$, where $A^T$ denotes the transpose of a matrix $A$). 
In the latter case, a fusion ring $\mathcal{R}'$ with basis $\{X'_1,\dots, X'_r\}$ and structure constants $N'_{ijk}$ is \textit{isomorphic} to $\mathcal{R}$ if there is a permutation $\sigma$ of $\{1,\dots,r\}$ such that $N'_{ijk}=N_{\sigma(i)\sigma(j)\sigma(k)}$ for all $i,j,k\in\{1,\dots,r\}$. 

A fusion ring is said to be \textit{simple} if it has no proper nontrivial fusion subring. 

The new condition studied here is the singularity of a multiplication
matrix. 
Letting $\det(A)$ be the determinant of a square matrix $A$, we say that $\mathcal{R}$ is \textit{vanishing} if $\det(N_{X_i})=0$ for some $i$. 

\subsection{Motivation for studying the vanishing property} \label{ss-sec:motivation}
For a commutative fusion ring, the eigenvalues of $N_X$, where $X$ is any basis element, are the values of its ring characters; thus vanishing is equivalent to the occurrence of a zero in the
character table. This gives the condition both a spectral meaning and
a character-theoretic interpretation. 

A first motivation comes from Burnside's vanishing theorem:
every nonlinear irreducible character of a finite group vanishes on
some conjugacy class \cite[Theorem 3.15]{Isaacs}. Consequently, the Grothendieck ring of
$\operatorname{Rep}(G)$ is vanishing for every finite nonabelian group $G$, where $\operatorname{Rep}(G)$ denotes the category of finite-dimensional complex representations of $G$.
Burnside's theorem is substantially stronger than the existence of a
single zero: it requires a zero for every noninvertible irreducible
representation. Burciu and the second author extend this perspective in \cite[Subsection 12.1]{BP}: a fusion ring with unit $\mathbf{1}$ is said to be \textit{Burnside} if $\det(N_X)=0$ for any basis element $X$ satisfying $XX^*\neq \mathbf{1}$. 
Every nonpointed Burnside fusion ring is therefore vanishing.
The weaker vanishing condition is a natural object of study in its own
right: it isolates the presence of a zero without requiring the
vanishing in every such row supplied by Burnside's theorem.

The vanishing condition occurs frequently among Grothendieck rings of small
rank. Of the fifteen based rings of rank four admitting a pivotal categorification
over $\mathbb C$ classified by the authors
\cite[Theorem~1.1]{DPP}, exactly seven are vanishing.
They are listed below; the second column gives the Frobenius--Perron
dimension of one simple object whose multiplication matrix has determinant zero. 
The count follows directly from the fusion rules in that classification.
Here, $D_{10}$ denotes the dihedral group of order $10$ and
$\operatorname{NG}(C_3,m)$ denotes a near-group category with
noninvertible simple $X$ and fusion rules $gX=Xg=X$ and
$X^2=\sum_{g\in C_3}g+mX$.

\begin{center}
\small
\renewcommand{\arraystretch}{1.2}
\begin{tabular}{lc}
\hline
Categorical model & $\FPdim(X)$ with $\det(N_X)=0$ \\
\hline
$\operatorname{Rep}(D_{10})$ & $2$ \\
$\operatorname{Ad}(\operatorname{SU}(2)_7)$
  & $1+2\cos(2\pi/9)$ \\
Tambara--Yamagami on $C_3$ & $\sqrt{3}$ \\
Haagerup $\mathcal H_1$ & $(1+\sqrt{13})/2$ \\
$\operatorname{Rep}(A_4)$ & $3$ \\
$\operatorname{NG}(C_3,3)$ (Izumi--Xu) & $(3+\sqrt{21})/2$ \\
$\operatorname{NG}(C_3,6)$ & $3+2\sqrt{3}$ \\
\hline
\end{tabular}
\end{center}

Likewise, fourteen of the seventeen presently known rank-five
Grothendieck rings are vanishing; the known examples are supplied by
\cite{LPR,ABDP,BNRW,Izumi}. This is a count among known based rings,
not an assertion of a complete classification in rank five.
Different categorifications of the same fusion ring are counted only
once. These small-rank examples suggest that the vanishing condition captures a
substantial part of the arithmetic of categorifiable rings. They also
motivate an asymptotic question: for integers $r,B\geq 1$, let $\mathcal{G}_{r,B}$ be the set of based-ring isomorphism classes of Grothendieck rings of rank $r$ admitting a pivotal categorification over $\mathbb{C}$ and having all structure constants at most $B$. 
Let $\mathcal{V}_{r,B}$ be its subset of vanishing rings, and put $\delta_r$ to be the limit inferior of $\vert \mathcal{V}_{r,B} \vert / \vert \mathcal{G}_{r,B} \vert$ as $B\to +\infty$, where $\vert S\vert$ is the size of a finite set $S$. 
Does $\delta_r$ tend to $1$ as $r$ grows?

We now provide one final motivation, phrased as a question: Can a fusion ring admit a categorification in positive characteristic but none in characteristic zero \cite[Question~5.3]{LPR}? To our knowledge, no such example is known. 
Such an integral fusion ring must have rank at least four: every integral fusion ring of rank at most three is isomorphic to $\mathbb Z$, $\mathbb Z[C_2]$, $\mathbb Z[C_3]$, or $K_0(\operatorname{Rep}(S_3))$ \cite[proof of Theorem~5.1]{Cze}, and all of them admit a characteristic-zero categorification.

A consequence of \cite[Theorem 1.1]{ABDP} is that no simple integral fusion ring of rank four admits a categorification in characteristic zero. 
Therefore, obtaining such explicit fusion rings is of interest, as they are candidates for the first examples admitting a categorification in positive characteristic but none in characteristic zero. 

We construct in Theorem \ref{thm:classintegral1} an explicit infinite family of simple integral vanishing fusion rings. 
If any member of this family admits a categorification $\mathcal{C}$ over an algebraically closed field of positive characteristic, then the categorical global dimension of $\mathcal{C}$, defined as the sum of the squared norms of the simple objects, must be $0$ \cite[Theorem~9.3]{ENO05}, see also \cite[Theorem~9.16.1]{EGNO}. 
 \subsection{Main results}
The first aim of this paper is to provide a complete parametrization, possibly with repetitions, of the isomorphism classes of vanishing fusion rings of rank four.
Let us fix once and for all a free $\Z$-module $\mathcal{R}$ with basis $\{X_1, X_2, X_3, X_4\}$ and multiplication
\[ X_iX_j=\sum_{k=1}^4 N_{ijk}X_k, \qquad N_{ijk}\in\mathbb Z_{\geq0},\] satisfying the unit, duality, and Frobenius reciprocity axioms above. 
As fusion rings of rank four are commutative \cite[Proposition 9.2]{ABDP}, we also suppose that the multiplication is commutative. 
Since we are interested in isomorphism classes of vanishing fusion rings, we may assume without loss of generality that $\det(N_{X_2})=0$. 
For brevity, put $\mathbf{1}=X_1, X=X_2, Y=X_3$, and $Z=X_4$. 

Thus, only associativity remains to be checked. This is equivalent to showing that $N_X, N_Y$, and $N_Z$ commute pairwise \cite[Definition~1.2 and Remark~1.5]{BDD}, see also \cite[Lemma A.1]{DPP} for a result more in the spirit of this paper (the matrix of $L_{X_i}$ in the basis $\{X_1,X_2,X_3,X_4\}$ is $N_{X_i}^T$).

We now explain the content of this text.
 Section \ref{sec:nonselfdual} treats the
non-self-dual case. In the self-dual case, the tensor $(N_{ijk})$ is
completely symmetric and the multiplication matrices are of the form
\begin{equation}\label{eq:matrices}
\begin{gathered}
 N_X=\begin{pmatrix}
 0&1&0&0\\1&a&b&c\\0&b&d&e\\0&c&e&f
 \end{pmatrix},\qquad
 N_Y=\begin{pmatrix}
 0&0&1&0\\0&b&d&e\\1&d&g&h\\0&e&h&i
 \end{pmatrix},\; \; \; \; \text{and} \; \; \; \; 
 N_Z=\begin{pmatrix}
 0&0&0&1\\0&c&e&f\\0&e&h&i\\1&f&i&j
 \end{pmatrix},
\end{gathered}
\end{equation}
where the entries are nonnegative integers. 
Section \ref{sec:selfdualdeg} settles the degenerate case $df=0$. 
In these preliminary sections, none of the fusion rings is simple and there are exactly four isomorphism classes of integral vanishing fusion rings of rank four, one of which is non-self-dual (Remark \ref{rem:isod=0f>0}).

Let $\gcd(x,y)$ denote the greatest common divisor of positive integers $x$ and $y$. 
The nondegenerate case $df>0$ is described by the next theorem, whose proof is given in Subsections \ref{subsec:reductions} and \ref{subsec:linearmatrix}. 
Set
\begin{equation} \label{eq:kappapq}
 \kappa=b^2+c^2+(a-f-d)\gcd(d,f),\; \;  \; \; \; \; p=\sqrt{\frac{f}{\gcd(d,f)}},\; \; \; \; \; \; \text{and} \; \;  \; \; \; \; q=\sqrt{\frac{d}{\gcd(d,f)}} .
\end{equation}

We will show at the very beginning of Subsection \ref{subsec:reductions} that $p$ and $q$ are coprime positive integers.
Swapping $Y$ and $Z$ sends the tuple $(a,b,c,d,e,f,g,h,i,j)$ to $(a,c,b,f,e,d,j,i,h,g)$. In particular, this changes the sign of $qc-pb$ and we can therefore always assume that it is a nonnegative integer. 

\begin{theorem}\label{thm:main}
Assume that $\mathcal{R}$ is self-dual and that $df>0$.
Then the following are equivalent:
\begin{enumerate}
\item $\mathcal R$ is a fusion ring.
\item The ten structure constants satisfy:
 \begin{enumerate}
 \item $qc-pb=1$;
 \item $bq+pc$ is a positive integer dividing $c^2+b^2+(p^2+q^2)\kappa$;
 \item
 \[
  \begin{pmatrix}g&h\\h&i\end{pmatrix}=cR-pK \; \; \text{and} \; \; 
  \begin{pmatrix}h&i\\i&j\end{pmatrix}=-bR+qK,
 \]
 where \[ \tau =\frac{c^2+b^2+(p^2+q^2)\kappa}{bq+pc}-(bq+pc), \qquad R =\begin{pmatrix}
   2qb-q^2\tau&qc+pb-pq\tau \\
   qc+pb-pq\tau&2cp-p^2\tau
   \end{pmatrix},\] and  \[ K=\begin{pmatrix}
   b^2-1+q^2(c^2+b^2-\kappa)&bc+pq(c^2+b^2-\kappa)\\
   bc+pq(c^2+b^2-\kappa)&c^2-1+p^2(c^2+b^2-\kappa)
   \end{pmatrix}.\]
 \end{enumerate}
\item $\mathcal R$ is a simple fusion ring.
\end{enumerate}
\end{theorem}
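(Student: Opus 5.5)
The plan is to extract everything from the kernel of $N_X$, then treat the remaining commutation relations as a linear problem. First, expanding $\det(N_X)$ along its first row gives $\det(N_X)=-(df-e^2)$, so the vanishing hypothesis is exactly $e^2=df$. With $m=\gcd(d,f)$, the integers $d/m$ and $f/m$ are coprime and their product $(e/m)^2$ is a square, so each is a square. This yields the coprime positive integers $p,q$ of \eqref{eq:kappapq} with $d=mq^2$, $f=mp^2$ and $e=mpq$.

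A direct computation then shows that
\[
w=(qc-pb,\,0,\,p,\,-q)^T
\]
spans $\ker N_X$. It spans and is not merely contained in the kernel, because the $3\times 3$ minor $\begin{pmatrix}1&b\\0&d\end{pmatrix}$-type block has rank $3$ once $d>0$.

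Now assume (1). By \cite[Proposition 9.2]{ABDP} and self-duality, the $N_{X_i}$ are commuting symmetric matrices. Hence they preserve the line $\ker N_X$, so $w$ is a common eigenvector. Common eigenvectors of a commutative fusion ring are proportional to $(\chi(X_i))_i$ for a ring character $\chi$, so the first entry satisfies $qc-pb\neq 0$. After normalising, $\chi(Y)=p/(qc-pb)$ and $\chi(Z)=-q/(qc-pb)$. These are rational algebraic integers, hence integers, so $qc-pb$ divides both $p$ and $q$. Since $\gcd(p,q)=1$ and we arranged $qc-pb\geq 0$, this gives (a).

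Next I would write the eigen-equations $N_Yw=pw$ and $N_Zw=-qw$, which are linear in $(g,h,i)$ and $(h,i,j)$. Together with the remaining entries of $[N_X,N_Y]=0$ and $[N_X,N_Z]=0$, they form a linear system in the six unknowns $g,h,i,j$ (and the redundant copies of $h,i$). I expect to solve this system in closed form, with the coefficients of $N_X$ as parameters. The solution should naturally be organised as the matrix identities in (c): the matrix $R$ comes from the part forced by the eigenvector $w$, and $K$ from the quadratic relations involving $\kappa$. The solution involves one free parameter, namely the second nontrivial character value, and the last commutator $[N_Y,N_Z]=0$ should pin it down as $\tau$. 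Requiring that $\tau$ and the resulting $g,h,i,j$ are integers gives the divisibility in (b). Positivity of $bq+pc$ follows from (a), since $b,c\ge 0$ are not both zero. The converse (2)$\Rightarrow$(1) is then a verification: with these formulas, the matrices commute pairwise, and the nonnegativity of the entries is assumed as part of the data. By \cite[Lemma A.1]{DPP}, commutation gives associativity.

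For (1)$\Rightarrow$(3), I would enumerate the possible proper nontrivial subrings, which are spanned by $\mathbf 1$ and one or two of $X,Y,Z$:
\begin{itemize}
\item A subring containing $X$ and $Y$ contains $Z$, because $XY$ contains $Z$ with multiplicity $e>0$. The same argument applies to $X$ and $Z$.
\item The subrings $\{1,X\}$ and $\{1,Y,Z\}$ would force $b=c=0$, contradicting (a).
\item The subring $\{1,Y\}$ forces $b=0$ and $h=0$, which with (a) gives $q=c=1$.
\item The subring $\{1,Z\}$ forces $c=0$ and $i=0$, which with (a) is impossible because it would give $-pb=1$.
\end{itemize}
The remaining case $\{1,Y\}$ must be excluded by substituting $b=0$, $q=c=1$ into the formula for $h$ in (c) and checking that $h>0$, possibly using $f>0$. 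Finally, (3)$\Rightarrow$(1) is trivial.

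The main obstacle is the linear-algebra step: solving the commutation system cleanly and identifying the free parameter with $\tau$ so that the answer takes the stated $R,K$ form. I would first diagonalise as far as possible in the orthogonal basis adapted to $w$ and $(1,0,0,0)$, where the equations decouple.
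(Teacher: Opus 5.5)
\textbf{Verdict: genuine gap.} Your proof of (a) matches the paper's. The heart of the theorem, however, is the equivalence of (1) with (2c), and you only announce it (``I expect to solve this system''). The structure you anticipate is also wrong, so your plan would not lead to the statement.

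\emph{There is no free parameter.} Once $qc-pb=1$ holds, the two conditions $[N_X,N_Y]=[N_X,N_Z]=0$ already determine $(g,h,i,j)$ uniquely from $(a,\dots,f)$. The quantity $\tau$ is an explicit function of $a,\dots,f$ through $\kappa$; it is not a character value to be fixed afterwards. Nor can $[N_Y,N_Z]=0$ pin anything down, because it follows from the other two relations. Indeed, $A=[N_Y,N_Z]$ is skew-symmetric with zero first column and commutes with $N_X$. Comparing the first rows and the $(4,4)$-entries of $N_XA$ and $AN_X$, and using $e>0$, gives $A=0$.

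\emph{The missing idea.} Put $M(\mathbf v)=v_1N_Y+v_2N_Z$ and rewrite $[N_X,M(\mathbf v)]=0$ as two linear conditions on the $2\times 2$ block $\Theta(\mathbf v)$. The first prescribes $\Theta(\mathbf v)(b,c)^T$; the second prescribes the commutator of $\Theta(\mathbf v)$ with $P=\begin{pmatrix}q^2&pq\\pq&p^2\end{pmatrix}$.
\begin{itemize}
\item Uniqueness: a solution of the homogeneous system kills $(b,c)^T$. Applying $PM=MP$ to $(b,c)^T$ and using $qb+pc>0$, it also kills $(q,p)^T$. These two vectors form a basis since $qc-pb=1$, so the solution vanishes.
\item Existence, which is exactly what (2)$\Rightarrow$(1) needs: check that $K$ solves the system for $\mathbf v=(b,c)$ and $R$ solves it for $\mathbf v=(q,p)$, using Lagrange's identity. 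Linearity in $\mathbf v$ then gives $\Theta(1,0)=cR-pK$ and $\Theta(0,1)=-bR+qK$.
\end{itemize}

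\emph{Part (b).} This does not come from ``requiring $\tau$ to be an integer''; you must derive it. For instance, $R=q\Theta(1,0)+p\Theta(0,1)$ is an integer matrix, so $q^2\tau$ and $p^2\tau$ are integers, and B\'ezout with $\gcd(p,q)=1$ gives $\tau\in\mathbb Z$.

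\emph{Simplicity.} Your subring analysis reads the wrong multiplicities off the matrices. Since $Y^2=\mathbf 1+dX+gY+hZ$ and $Z^2=\mathbf 1+fX+iY+jZ$:
\begin{itemize}
\item the span of $\{\mathbf 1,Y\}$ is closed exactly when $d=h=0$;
\item the span of $\{\mathbf 1,Z\}$ is closed exactly when $f=i=0$;
\item the span of $\{\mathbf 1,Y,Z\}$ is closed exactly when $d=e=f=0$.
\end{itemize}
All three are excluded at once by $df>0$.

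Your criterion ``$b=h=0$'' for $\{\mathbf 1,Y\}$, together with the proposed patch of checking $h>0$, actually fails. When $b=0$ and $q=c=1$, (2c) gives $h$ equal to the $(1,1)$-entry of $K$, namely $-\kappa$, which can vanish. Concretely, $(a,b,c,d,e,f,g,h,i,j)=(1,0,1,1,1,1,0,0,1,1)$ is the fusion ring of $\operatorname{Ad}(\operatorname{SU}(2)_7)$. It satisfies (2) with $b=h=0$, yet it is simple because $Y^2=\mathbf 1+X$.

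The correct argument is the paper's direct one:
\begin{itemize}
\item $X^2$ contains $Y$ or $Z$, since $(b,c)\neq(0,0)$;
\item $Y^2$ and $Z^2$ contain $X$, since $d,f>0$;
\item $XY$ contains $Z$ and $XZ$ contains $Y$, since $e>0$.
\end{itemize}
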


\begin{remark}\label{rem:divisibility}
Part $(2b)$ of Theorem~\ref{thm:main} is redundant, since it is implied by $(2a)$ and $(2c)$, but it highlights that $\tau$ is an integer.  Indeed, $bq+pc$ cannot be $0$ since either $b$ or $c$ is positive by $(2a)$. 
Next, $(2a)$ and $(2c)$ give
\[
 R=q\begin{pmatrix}g&h\\h&i\end{pmatrix}
   +p\begin{pmatrix}h&i\\i&j\end{pmatrix}.
\]
Thus, $R$ has integer entries, and its diagonal entries show that
$q^2\tau$ and $p^2\tau$ are integers. Since $p$ and $q$ are coprime,
B\'ezout's identity gives $\alpha,\beta\in\Z$ such that $\alpha p^2 +\beta q^2 =1$, and so $\tau=\alpha p^2 \tau + \beta q^2\tau \in\mathbb Z$.
\end{remark}

\begin{remark}
The overlap of entries $h$ and $i$ in Theorem \ref{thm:main} $(2c)$ is not an additional restriction: the relation $qc-pb=1$, which implies $(q^2+p^2)(b^2+c^2)-(bq+cp)^2=(qc-pb)^2=1$ by Lagrange identity, forces the $(2,1)$ and $(2,2)$-entries of the symmetric matrix $cR-pK$ to match $(1,1)$ and $(1,2)$-entries of the symmetric matrix $-bR+qK$, respectively. 
\end{remark}

We now illustrate through a concrete example how this theorem applies. 

\begin{example}\label{ex:family}
\rm{Fix an integer $\kappa$ as well as 
\begin{equation} \label{eq:example}
\det(N_X)=0,\; \; \; \; b=2,\; \; \; \;  c=q=3,\; \; \text{and}\; \;  p=4.
\end{equation}
Then, $qc-pb=1$ and Theorem \ref{thm:main} $(2b)$ tells us that $18$ divides $13+25\kappa$, that is, $\kappa\equiv 11\pmod{18}$.
Next, Theorem \ref{thm:main} $(2c)$ gives
\[
 g=\frac{45-3\kappa}{2},\qquad h=25-2\kappa,\qquad
 i=\frac{112-8\kappa}{3},\; \; \text{and} \; \; j=\frac{424-32\kappa}{9}.
\]
Under the congruence $\kappa\equiv 11\pmod{18}$, these numbers are integers and they are all
nonnegative exactly when $\kappa\leq12$. Moreover,
\[
 d=9\gcd(d,f),\qquad e=12\gcd(d,f),\qquad f=16\gcd(d,f),
 \; \; \text{and} \; \; a=25\gcd(d,f)+\frac{\kappa-13}{\gcd(d,f)}.
\]

Consequently, the fusion rings satisfying $\kappa\leq 12, \kappa\equiv 11\pmod{18}$, and \eqref{eq:example} are in bijection with
the positive divisors $\lambda$ of $\kappa-13$ satisfying $25\lambda^2+\kappa-13\geq 0$.
Explicitly, $\lambda=\gcd(d,f)$ and \[(a,b,c,d,e,f,g,h,i,j) = \left(
 25\lambda+\frac{\kappa-13}{\lambda}, 2, 3, 9\lambda,
 12\lambda,16\lambda,
 \frac{45-3\kappa}{2}, 25-2\kappa,
 \frac{112-8\kappa}{3},\frac{424-32\kappa}{9}
 \right).\]
 Since the choice $\lambda=13-\kappa$ is admissible, there is at least one fusion ring satisfying the prescribed values above. 
Finally, there is no fusion ring satisfying \eqref{eq:example} if $\kappa\geq13$ or $\kappa\not\equiv 11 \pmod{18}$. }
\end{example}

Recall that an integral fusion ring of rank at most three is isomorphic to $\mathbb Z$, $\mathbb Z[C_2]$, $\mathbb Z[C_3]$, or $K_0(\operatorname{Rep}(S_3))$. 
What about the rank-four case? 
The existence of infinitely many simple integral fusion rings of rank four was raised
in \cite[slide~9]{Pal23}, \cite[slides~8 and~19]{Pal24}, and
\cite[Question~1]{Pal}. The same expectation is supported by the
computations of Bruns and the second author \cite{BPData}.
We answer this question affirmatively by constructing an explicit
infinite family of pairwise nonisomorphic simple integral vanishing
fusion rings of rank four.

However, we cannot immediately deduce anything about the integral case from Theorem \ref{thm:main}, which will be studied in Subsection \ref{ss-section:integral case}. 
We will show, among other things, that it corresponds precisely to the case $\kappa=\gcd(d,f)^2$ (Proposition \ref{prop:firstcharinteger}). 
In this proposition, we will also explicitly compute the basic Frobenius--Perron dimensions of $X,Y$, and $Z$.

The special case $\kappa=\gcd(d,f)^2$ is very rigid. 
For instance, the rich theory of Markov equations, only a tiny part of which is described in Subsection \ref{ss-sec:Markov},
forces the quotient of $c^2+b^2+(p^2+q^2)\kappa$ by $bq+pc$ to be $3\gcd(d,f)$ (Lemma \ref{lem:scale-onebis}). 
With this new equation at hand, it is then possible to provide a more explicit description of simple integral vanishing fusion rings of rank four. 
The proof is given in Subsection \ref{ss-sec:proofintegralcase}.
Set $(F_n)_{n\in \Z}$ to be the usual Fibonacci sequence, that is, $F_0=0, F_1=1$, and $F_{n+2}=F_{n+1}+F_n$ for all integers $n\in\Z$. 
In particular, $F_{-1}=1$ and $F_{-2}=-1$.

\begin{thm} \label{thm:classintegral}
Assume that $\mathcal{R}$ is self-dual and that $df>0$. 
Then $\mathcal{R}$ is a simple integral fusion ring if and only if there is an integer $n\geq 3$ such that \[\begin{cases} 
 \gcd(d,f)=\frac{F_{2n+1}}{p^2+q^2} \\ 
 
 \\
 
a=(1+p^2+q^2)\gcd(d,f)-\frac{b^2+c^2}{\gcd(d,f)} \\ 

\\

b=\frac{qF_{2n-1}-p}{p^2+q^2}, \; \; c=\frac{pF_{2n-1}+q}{p^2+q^2} \\

\\

d=\gcd(d,f)q^2, \; \; e=\gcd(d,f)pq, \; \; f=\gcd(d,f) p^2,  \\ 

\\

g= q^2\left(p\gcd(d,f)^2-3c\gcd(d,f)+b\right)+p+2b+pb^2 \\

\\

h = q\left(pb+c^2-3\gcd(d,f)pc+p^2\gcd(d,f)^2\right) \\

\\

i= p \left( qc+c^2-3pc \gcd(d,f) + p^2 \gcd(d,f)^2\right) \\ 

\\ 

j= (p^2-qc+2)c +p^2\gcd(d,f)\left( 3b - \gcd(d,f) q\right)-q
\end{cases}. \]
\end{thm}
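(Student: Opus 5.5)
Let $\lambda=\gcd(d,f)$, $s=bq+pc$ and $M=c^2+b^2+(p^2+q^2)\kappa$. The plan is to combine Theorem~\ref{thm:main} with the integral criterion of Proposition~\ref{prop:firstcharinteger}, which says that integrality is equivalent to $\kappa=\lambda^2$, and with Lemma~\ref{lem:scale-onebis}, which then forces $M/s=3\lambda$. By Theorem~\ref{thm:main}, simplicity is automatic once $\mathcal R$ is a fusion ring with $df>0$. So $\mathcal R$ is a simple integral fusion ring if and only if conditions (2a)--(2c) hold together with $\kappa=\lambda^2$.

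First I simplify the conditions under $\kappa=\lambda^2$ and $M=3\lambda s$. Unwinding the definition of $\kappa$ gives $a=(1+p^2+q^2)\lambda-(b^2+c^2)/\lambda$, since $d+f=\lambda(p^2+q^2)$. We also have $d=\lambda q^2$ and $f=\lambda p^2$. The integer $e$ comes from the determinant condition $\det(N_X)=0$ together with commutation, which should give $e=\lambda pq$ (as in Example~\ref{ex:family}). Next, $\tau=3\lambda-s$. Substituting this into $R$ and $K$ in (2c) and using $qc-pb=1$, a direct expansion should yield the stated polynomial expressions for $g,h,i,j$. This is routine bookkeeping: rewrite $pq\tau$, $q^2\tau$, etc., and eliminate $qc$ via $qc=1+pb$.

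The arithmetic heart is the Diophantine system
\[
qc-pb=1,\qquad b^2+c^2+(p^2+q^2)\lambda^2=3\lambda(bq+pc).
\]
Put $u=bq+pc$ and $v=p^2+q^2$. Lagrange's identity gives $v(b^2+c^2)=u^2+1$, so the second equation becomes $u^2+1+v^2\lambda^2=3\lambda uv$, that is,
\[
u^2+(v\lambda)^2+1=3u(v\lambda).
\]
This is the Markov equation with one coordinate equal to $1$. By the theory recalled in Subsection~\ref{ss-sec:Markov}, its positive solutions are exactly the pairs of consecutive odd-indexed Fibonacci numbers $\{F_{2n-1},F_{2n+1}\}$. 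I would then determine which of the two is $v\lambda$. Positivity constraints (nonnegativity of $a$, and the size of $\tau$) should force $v\lambda=F_{2n+1}$ and $u=F_{2n-1}$. This gives $\lambda=F_{2n+1}/(p^2+q^2)$. Inverting the linear system $bq+pc=F_{2n-1}$, $qc-pb=1$ gives
\[
b=\frac{qF_{2n-1}-p}{p^2+q^2},\qquad c=\frac{pF_{2n-1}+q}{p^2+q^2}.
\]

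For the converse, given such data, I check in reverse that (2a) and (2c) hold, that $M=3\lambda s$ follows from the Fibonacci identity $F_{2n-1}^2+F_{2n+1}^2+1=3F_{2n-1}F_{2n+1}$, and that $\kappa=\lambda^2$; Proposition~\ref{prop:firstcharinteger} then gives integrality. The main obstacle is the threshold $n\geq 3$. Small $n$ (with $F_3=2$, $F_5=5$) must either give no valid data, through nonnegativity of $a$ or $j$ or integrality of $b$ and $c$, or reduce to $df=0$. I would settle these cases by direct inspection. A second delicate point is nonnegativity of $g,h,i,j$ for $n\geq3$. The hypotheses give divisibility of $F_{2n+1}$ by $p^2+q^2$ and integrality of $b$ and $c$, but nonnegativity must come from growth estimates such as $\lambda$ being large compared with $b/p$ and $c/q$. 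I would try to bound using $b/q\approx c/p\approx F_{2n-1}/v$ and $3\lambda\approx \phi^2 F_{2n-1}/v$, where $\phi$ is the golden ratio.
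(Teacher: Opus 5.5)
Your core argument follows the paper's route. Theorem \ref{thm:main} and Proposition \ref{prop:firstcharinteger} reduce the claim to (2a)--(2c) together with $\kappa=\lambda^2$. Lemma \ref{lem:scale-onebis} gives $t=3\lambda$ and, through the Markov equation with $a\ge 0$ selecting $\lambda(p^2+q^2)=F_{2n+1}$, the Fibonacci values. Lemma \ref{lem:scale-one} rewrites $g,h,i,j$, and the converse runs the same chain backwards. That part is sound. One small point: $e=\lambda pq$ follows directly from $e^2=df$, with no commutation needed.

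However, your ``second delicate point'' comes from a misreading, and the step would fail if you attempted it. The theorem does not assert that the displayed expressions are nonnegative. The structure constants of $\mathcal R$ are nonnegative integers by the standing setup, so in the converse direction you only need (2a)--(2c) and $\kappa=\lambda^2$. The nonnegativity you plan to prove by growth estimates is in fact false. Take $n=4$ and $(p,q)=(1,4)$: then $\lambda=2$, $b=3$, $c=1$, and $j=-3$, as the paper notes right after the theorem. Drop that step.

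Similarly, the threshold $n\ge 3$ matters only in the forward direction, and there Lemma \ref{lem:scale-onebis}, which you already invoke, supplies it. If you redo $n=1,2$ by hand, the only integral candidates have $\lambda=1$, $b=0$, $c=1$. What rules them out is $h=-1$, the $(1,1)$-entry of $qK$, not $a$, $j$, or $df=0$.
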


Note that it suffices to choose $n,p$, and $q$ to compute all these constants. 
We cannot yet deduce from this theorem that there are infinitely many simple integral vanishing fusion rings of rank four; the constants defined in the displayed system can fail to be both integral and nonnegative.
For instance, the case $n=4, p=1$, and $q=4$ gives $b=3, c=1, \gcd(d,f)=2$, but $j=-3$.
On the other hand, the case $n=7, p=17$, and $q=4$ provides $\gcd(d,f)=2$ and \[(a,b,c,d,e,f,g,h,i,j)=(523, 3, 13, 32, 136, 578, 64, 200, 867, 3681). \]
 
Remarkably, if $\gcd(d,f)=1$, then for all integers $n\geq 3$, there are parameters $p$ and $q$ making all these constants both integral and nonnegative, see Subsection \ref{ss-sec:casegcd1} for a proof of this claim. 

\begin{thm} \label{thm:classintegral1}
Assume that $\mathcal{R}$ is self-dual and that $df>0$. 
If $\gcd(d,f)=1$, then $\mathcal{R}$ is a simple integral fusion ring if and only if there is an integer $n\geq 3$ such that \[ \begin{cases} 
a=1+F_{2n+1}-F_{2n-3} \\ 

\\ 

b=F_{k-2}, \; \; \; \; c=F_{l-2}, \; \; \; \; d=F_k^2, \; \; \; \; e=F_nF_{n+1}, \; \; \; \; f=F_l^2 \\

\\

g= F_k^2(F_{k-2}-F_{l-4})+F_l+2F_{k-2}+F_lF_{k-2}^2 \\ 

\\ 

h=F_k(F_{k-2}F_l-1), \; \; \; \; \; \; i=F_l(F_kF_{l-2}-1) \\ 

\\ 

j= F_{l-2}(F_l^2-F_{l-2}F_k+2)+F_l^2F_{k-4}-F_k
\end{cases}, 
\] where $l$ and $k$ are the odd and even integers belonging to $\{n,n+1\}$, respectively. 
In addition, all the constants defined in the displayed system are both integral and nonnegative. 
Finally, we have $(\FPdim(X), \FPdim(Y), \FPdim(Z))=(1+F_{2n+1}, \; F_{k-2}+F_kF_{2n-1}, \; F_{l-2}+F_lF_{2n-1})$.  
\end{thm}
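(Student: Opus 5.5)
Since Theorem~\ref{thm:classintegral} already characterises simple integral fusion rings with $df>0$ in terms of $n\geq 3$ and coprime $p,q$, the plan is to specialise it to $\gcd(d,f)=1$. With $\lambda:=\gcd(d,f)=1$, the first equation of Theorem~\ref{thm:classintegral} becomes $p^2+q^2=F_{2n+1}$. The formulas $b=(qF_{2n-1}-p)/(p^2+q^2)$ and $c=(pF_{2n-1}+q)/(p^2+q^2)$ must give nonnegative integers with $qc-pb=1$, as required by Theorem~\ref{thm:main}(2a). The key step is to determine all such $(p,q)$. The natural candidate is Cassini-type: $F_n^2+F_{n+1}^2=F_{2n+1}$. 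So we try $\{p,q\}=\{F_l,F_k\}$, with $q=F_k$ for $k$ even and $p=F_l$ for $l$ odd, which is the assignment forced by the sign requirement $qc-pb=1$ after the normalisation $qc-pb\geq 0$ made before Theorem~\ref{thm:main}. Then $d=q^2=F_k^2$, $e=pq=F_nF_{n+1}$, $f=p^2=F_l^2$, matching the statement.

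The main obstacle is uniqueness: showing that $p^2+q^2=F_{2n+1}$ together with the integrality of $b$ and $c$ forces $(p,q)=(F_l,F_k)$. The congruences reduce to $F_{2n-1}\equiv p/q \pmod{F_{2n+1}}$ and $F_{2n-1}\equiv -q/p \pmod{F_{2n+1}}$. Hence $p\equiv qF_{2n-1}$, and $F_{2n-1}^2\equiv -1\pmod{F_{2n+1}}$, which is itself a Cassini-type identity. I would then argue that the lattice $\{(x,y)\in\Z^2: x\equiv yF_{2n-1}\pmod{F_{2n+1}}\}$ has covolume $F_{2n+1}$. Its vectors of squared norm $F_{2n+1}$ are therefore the minimal vectors of a lattice isometric to $\sqrt{F_{2n+1}}\,\Z^2$, and there are exactly four of them, $\pm(p,q)$ and $\pm(-q,p)$. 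Positivity and the sign of $qc-pb$ pin down a unique pair. Checking that $(F_l,F_k)$ lies in this lattice reduces to the identity $F_{2n-1}F_k\equiv F_l \pmod{F_{2n+1}}$, which follows from d'Ocagne/index-reduction identities such as $F_{2n-1}=F_{n-1}^2+F_n^2$ and $F_{m}F_{n+1}-F_{m+1}F_n=(-1)^nF_{m-n}$.

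Once $(p,q)$ is fixed, compute $b$ and $c$ explicitly. Using $F_kF_{2n-1}-F_l=F_{k-2}F_{2n+1}$ and the analogous identity for $c$, we get $b=F_{k-2}$ and $c=F_{l-2}$. Substituting $\lambda=1$, $p=F_l$, $q=F_k$ into the formulas for $a,g,h,i,j$ from Theorem~\ref{thm:classintegral} and simplifying with Fibonacci identities gives the displayed expressions. For example, $a=1+F_{2n+1}-(F_{k-2}^2+F_{l-2}^2)=1+F_{2n+1}-F_{2n-3}$, and $h=q(pb+c^2-3pc+p^2)$ is rewritten as $F_k(F_{k-2}F_l-1)$ via $F_l^2+F_{l-2}^2-3F_lF_{l-2}=\pm1$; the forms of $g$ and $j$ use $F_l-3F_{l-2}=-F_{l-4}$ and $3F_{k-2}-F_k=F_{k-4}$.

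Nonnegativity follows from the closed forms, since each is a sum or product of Fibonacci numbers with nonnegative indices for $n\geq 3$; $j$ is the only expression needing a short estimate. Finally, the Frobenius--Perron dimensions are read off from Proposition~\ref{prop:firstcharinteger} with $\lambda=1$, giving $\FPdim(X)=1+F_{2n+1}$, and $\FPdim(Y)$, $\FPdim(Z)$ as stated after simplifying with $b=F_{k-2}$, $c=F_{l-2}$.
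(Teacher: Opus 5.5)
Your proof is correct, but the decisive uniqueness step takes a genuinely different route from the paper.

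\textbf{The paper's route.} The paper never uses the explicit formulas for $b$ and $c$ from Theorem~\ref{thm:classintegral}. In Lemma~\ref{lmm:tracedet} it proceeds as follows.
\begin{itemize}
\item From $\kappa=1$, $t=3$ and $qc-pb=1$ it builds a symmetric integer matrix of trace $3$ and determinant $1$.
\item It extracts $p^2+c^2-3pc=\pm1$ and $q^2+b^2-3qb=\mp1$.
\item The classification of $x^2+y^2\pm1=3xy$ (Theorems~\ref{thm:solveMarkov} and~\ref{thm:solveMarkov2}) gives $(p,c)=(F_r,F_{r-2})$ and $(q,b)=(F_s,F_{s-2})$.
\item Brown's theorem (Theorem~\ref{thm:Brown}) forces $\{r,s\}=\{n,n+1\}$, and Vajda's identity fixes the parities.
\end{itemize}

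\textbf{Your route.} You read off from the integrality of $b$ that $p\equiv F_{2n-1}q \pmod{F_{2n+1}}$. You then use only $F_{2n-1}^2\equiv-1\pmod{F_{2n+1}}$, which is Catalan's identity $F_{2n-1}^2-F_{2n-3}F_{2n+1}=-1$. This is a Gaussian-integer uniqueness argument. It dispenses with Brown's theorem and Theorem~\ref{thm:solveMarkov2}; Theorem~\ref{thm:solveMarkov} is still used, through Lemma~\ref{lem:scale-onebis}, inside Theorem~\ref{thm:classintegral}. The price is that you must guess the candidate $(F_l,F_k)$ in advance, whereas the paper's route explains where the Fibonacci numbers come from.

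\textbf{Tightening the lattice step.} Covolume $F_{2n+1}$ alone does not make your lattice a scaled copy of $\Z^2$; the congruence $F_{2n-1}^2\equiv-1$ is what does it. The cleanest way to say this is the following. Suppose $(p,q)$ and $(p',q')$ both lie in the lattice with $p^2+q^2=p'^2+q'^2=N:=F_{2n+1}$. Then $N$ divides both $pp'+qq'$ and $pq'-qp'$, while $(pp'+qq')^2+(pq'-qp')^2=N^2$. Hence one of them vanishes and $(p',q')\in\{\pm(p,q),\pm(-q,p)\}$. Positivity alone then forces $(p,q)=(F_l,F_k)$; the sign of $qc-pb$ is not needed.

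\textbf{Smaller corrections.}
\begin{itemize}
\item Nonnegativity of $g$ also needs an estimate, namely $F_{k-2}\geq F_{l-4}$. When $n=3$ the index $l-4=-1$ is negative, and equality $F_2=F_{-1}=1$ holds.
\item The identity giving $h$ and $i$ is $F_l^2+F_{l-2}^2-3F_lF_{l-2}=-1$, with the minus sign forced because $l$ is odd; writing $\pm1$ is not enough.
\item State the converse explicitly. Given the displayed values, $\gcd(d,f)=1$ yields $\lambda=1$, $q=F_k$ and $p=F_l$, and your identities run backwards to the system of Theorem~\ref{thm:classintegral}.
\end{itemize}
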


In particular, $\FPdim(\mathcal{R})=1+ (1+F_{2n+1})^2+ (F_{n-2}+F_nF_{2n-1})^2+(F_{n-1}+F_{n+1}F_{2n-1})^2$. 
Since the Fibonacci sequence is strictly increasing with $n\geq 2$, this global dimension, which is invariant under fusion ring isomorphism, is strictly increasing with $n\geq 3$. Hence, there are infinitely many simple integral vanishing fusion rings of rank four. 
Due to its strong connection to the Fibonacci numbers, the infinite family stated in Theorem \ref{thm:classintegral1} is called the \textit{rank-four Fibonacci family}.

We now present two applications of our study, which will be proved in the final Section \ref{subsec:frobenius}.

A second question asks whether a simple integral fusion ring of rank
four can be $1$-Frobenius, meaning that every basic Frobenius--Perron
dimension divides the global one. This question, with nonexistence
suggested by the computations made in \cite{BPData}, appears in \cite[slide~9]{Pal23},
\cite[slide~8]{Pal24}, and \cite[Question~2]{Pal}. 
We prove that no such ring exists. 
More precisely:

\begin{thm} \label{thm:frobenius-obstruction}
In a simple integral fusion ring $\mathcal{F}$ of rank four, there is at most one nonunit basis element whose basic Frobenius--Perron dimension divides $\FPdim(\mathcal{F})$. 
Moreover, such an element exists if and only if, up to isomorphism, $\mathcal{F}$ belongs to the rank-four Fibonacci family.
\end{thm}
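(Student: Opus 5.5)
We reduce Theorem~\ref{thm:frobenius-obstruction} to the explicit classification of Theorem~\ref{thm:classintegral}. First we show that every simple integral fusion ring of rank four is, up to isomorphism, self-dual with $df>0$. A non-self-dual ring of rank four has a dual pair $Y^*=Z$, and the preliminary Section~\ref{sec:nonselfdual} shows it is never simple. In the self-dual case with $df=0$, Section~\ref{sec:selfdualdeg} likewise produces a proper fusion subring. We also need such a ring to be vanishing, so that we may place the singular matrix at $X$. For this we use \cite[Theorem 1.1]{ABDP} and the known integral rank-four data, or alternatively a direct character-table argument: the product of the character values along the row of a simple integral ring forces a zero. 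Once this is settled, $\mathcal F$ is given by Theorem~\ref{thm:classintegral} for some $n\ge3$ and coprime $p,q$. Proposition~\ref{prop:firstcharinteger} then gives explicit formulas for $\FPdim(X),\FPdim(Y),\FPdim(Z)$ and for $D=\FPdim(\mathcal F)$ in terms of $n,p,q$ and $\lambda=\gcd(d,f)$.

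Next we compute divisibility. We expect $\FPdim(X)=1+\lambda F_{2n+1}$-type expressions and, by analogy with Theorem~\ref{thm:classintegral1}, $\FPdim(Y)=b+\lambda qF_{2n-1}$ and $\FPdim(Z)=c+\lambda pF_{2n-1}$. We reduce $D$ modulo each of these three dimensions using the Markov-type identity of Lemma~\ref{lem:scale-onebis}, the relation $qc-pb=1$, and Cassini/d'Ocagne identities such as $F_{2n+1}F_{2n-3}-F_{2n-1}^2=1$. The aim is to show that $D \bmod \FPdim(Y)$ and $D\bmod\FPdim(Z)$ are nonzero residues that are small relative to the modulus, so that they cannot vanish. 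For $X$ we would show $D\equiv 0 \pmod{\FPdim(X)}$ exactly when $\lambda=1$, where Theorem~\ref{thm:classintegral1} gives $D=1+(1+F_{2n+1})^2+\cdots$ and the identity reduces to a Fibonacci identity. Conversely, for $\lambda>1$ we would show the residue is a nonzero multiple of $\lambda-1$ that is smaller than $\FPdim(X)$. This gives both assertions: at most one element, namely $X$, and such an element exists exactly in the Fibonacci family.

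The main obstacle is the residue computation, in particular proving that the residues for $Y$ and $Z$ never vanish. These residues are polynomial in $b,c,p,q,\lambda,F_{2n\pm1}$, and we must bound them strictly between $0$ and the modulus. We would first use the coprimality of $\FPdim(Y)$ with $q$, obtained from $qc-pb=1$, to clear denominators. Then we would express $D\cdot q^2$ modulo $\FPdim(Y)$ via the substitution $\lambda qF_{2n-1}\equiv -b$ and look for a closed form such as $\pm(\lambda^2 q^2+\dots)$ that can be bounded directly. If the closed form is not small, we would instead use the Frobenius--Perron eigenvector relation $\FPdim(Y)^2=\sum_k N_{YYk}\FPdim(X_k)$ to relate $D/\FPdim(Y)$ to a non-integral rational number.
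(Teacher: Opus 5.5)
There is a genuine gap at the start. You assume that every simple integral fusion ring of rank four is vanishing, so that the singular matrix can be placed at $X$ before anything else, and nothing supports this. The categorification result recalled in the introduction says nothing about singular multiplication matrices of abstract rings. The ``product of character values'' at $X$ is just $\det(N_X)=e^2-df$, an integer with no a priori reason to vanish. The paper never claims that all such rings are vanishing, and Theorem~\ref{thm:frobenius-obstruction} concerns \emph{all} simple integral rings of rank four.

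For the same reason, invoking Sections~\ref{sec:nonselfdual} and~\ref{sec:selfdualdeg} to obtain self-duality and $df>0$ is circular, since both sections assume $\det(N_X)=0$ throughout. The missing idea is to let the divisibility hypothesis \emph{produce} the vanishing. Self-duality is proved independently of vanishing (Lemma~\ref{lmm:selfdual}, via the dimension equations). Then Proposition~\ref{prop:determinant-general} shows that a self-dual $X$ with $x=\FPdim(X)$ satisfies $x\gcd(x,D)\mid\det(N_X)$, because $\operatorname{adj}(xI-N_X)=(\chi'(x)/D)\mathbf v\mathbf v^T$ is an integer matrix whose $(1,1)$-entry forces $D\mid\chi'(x)$. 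It also gives $|\det(N_X)|<x^2$, by Perron--Frobenius comparison with the lower $2\times2$ block. Hence $x\mid D$ forces $\det(N_X)=0$. Only after this can you put the divisible element in the singular slot and run the classification. A non-vanishing ring then has no divisible element at all, which is a case your argument never addresses.

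The second gap is the treatment of $Y$ and $Z$. You flag it as the main obstacle and leave it as a search for small residues, but no bound is produced. The formulas you plan to use are also off: Proposition~\ref{prop:firstcharinteger} gives $\FPdim(X)=\lambda(1+p^2+q^2)=\lambda+F_{2n+1}$, $\FPdim(Y)=b+qF_{2n-1}$ and $\FPdim(Z)=c+pF_{2n-1}$, with no factor $\lambda$ on the Fibonacci terms.

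The paper avoids residues altogether. By Corollary~\ref{cor:2integers2quad}, the eigenvalues of $N_X$ are $0$, $\FPdim(X)$ and the two irrational roots of $T^2-(a-\lambda)T-1$. If $\det(N_Y)=0$, its one-dimensional kernel is spanned by a rational vector $\mathbf v$ with $v_3=0$. Since $N_X$ and $N_Y$ commute, $\mathbf v$ is an $N_X$-eigenvector with nonzero rational eigenvalue, hence proportional to $(1,\FPdim(X),\FPdim(Y),\FPdim(Z))^T$, a contradiction. Thus $\det(N_Y)\neq0$, and $\det(N_Z)\neq0$ by symmetry (Corollary~\ref{cor:unique-vanishing}). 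Proposition~\ref{prop:determinant-general} then excludes $\FPdim(Y)\mid D$ and $\FPdim(Z)\mid D$.

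Your plan for $X$ itself is sound. Writing $D=(1+p^2+q^2)\bigl(b^2+c^2+\lambda^2(1+p^2+q^2)+(bq+pc)^2\bigr)$, the second factor is $\equiv-1\pmod{\lambda}$. So $D\equiv(\lambda-1)(1+p^2+q^2)\pmod{\FPdim(X)}$, which is Corollary~\ref{cor:basicdividesglobal}. Theorem~\ref{thm:classintegral1} then identifies the case $\lambda=1$ with the Fibonacci family.
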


Any pointed fusion ring is (vacuously) Burnside. 
Our study completes the classification of Burnside fusion rings of rank four (note that a vanishing fusion ring is necessarily nonpointed). 

\begin{thm} \label{thm:Burnside}
A nonpointed fusion ring of rank four is Burnside if and only if it is non-simple vanishing. 
\end{thm}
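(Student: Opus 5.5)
We must prove that a nonpointed rank-four fusion ring $\mathcal{F}$ is Burnside if and only if it is vanishing but not simple. The two directions are quite different in nature. One direction rests on the explicit classification of all vanishing rank-four rings given earlier in the paper. The other is a single structural fact that applies to every simple vanishing ring. In both directions we work with the same bookkeeping: since $\mathcal{F}$ is commutative, $\det(N_W)=0$ means the character table has a zero in the column of $W$.

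\emph{Burnside implies vanishing and non-simple.} Vanishing is immediate, since a nonpointed Burnside ring has some $W$ with $WW^*\neq\mathbf 1$ and hence $\det(N_W)=0$. For non-simplicity it suffices to show that a simple vanishing ring is never Burnside.

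Every simple vanishing ring of rank four is covered by Theorem~\ref{thm:main}. Indeed, by Section~\ref{sec:nonselfdual} and Section~\ref{sec:selfdualdeg}, the non-self-dual case and the self-dual case with $df=0$ contain no simple rings. So we may assume we are in the situation of Theorem~\ref{thm:main}, normalized so that $\det(N_X)=0$. There $\mathcal{F}$ is simple and has no invertible nonunit element, so Burnside would require $\det(N_Y)=\det(N_Z)=0$ as well.

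Our plan is to exploit $qc-pb=1$ together with the parametrization in (2c). We first compute $\det(N_X)=0$ in terms of $(a,b,c,d,e,f)$, which gives the relation encoded by $\kappa$, and similarly express $\det(N_Y)$ from the matrix in \eqref{eq:matrices}. For the latter:
\begin{itemize}
\item expanding along the first row gives $\det(N_Y)=-\det\begin{pmatrix}0&b&e\\1&d&h\\0&e&i\end{pmatrix}=bi-e^2$;
\item similarly $\det(N_Z)=cg\cdot(\ldots)$, but more precisely $\det(N_Z)=-(ch-e^2)$ up to sign after the analogous expansion;
\item and $\det(N_X)=df-e^2$.
\end{itemize}

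Vanishing of $\det(N_X)$ gives $e^2=df$, which explains $d=\gcd q^2$, $e=\gcd\, pq$ and $f=\gcd\, p^2$. If also $\det(N_Y)=0$, then $bi=e^2=df$. Substituting $i$ from (2c), we expect this to contradict $qc-pb=1$. The cleanest route is probably to use the linear relation $R=q\begin{pmatrix}g&h\\h&i\end{pmatrix}+p\begin{pmatrix}h&i\\i&j\end{pmatrix}$ from Remark~\ref{rem:divisibility} to show that $bi=pq\,\gcd(d,f)^2$ and $ch=pq\,\gcd(d,f)^2$ cannot both hold. Computing these products and reducing modulo a suitable quantity (for instance modulo $p$ or $q$, using $qc\equiv 1 \pmod p$) should give the contradiction. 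This step is the main obstacle, and I would first test it on Example~\ref{ex:family} and on the Fibonacci family to see which determinant is nonzero.

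\emph{Vanishing and non-simple implies Burnside.} Here we go through the explicit list of non-simple vanishing rings from Section~\ref{sec:nonselfdual} and Section~\ref{sec:selfdualdeg}; case $df>0$ contributes none by Theorem~\ref{thm:main}. A non-simple nonpointed rank-four ring has a proper nontrivial subring, either of rank two or of rank three. In each family we identify the invertible elements and check $\det(N_W)=0$ for every remaining $W$. We expect this to reduce to one or two determinant computations per family, such as $\det(N_Y)$ in the case $d=0$, using the character table induced from the subring. This is routine given those classifications.
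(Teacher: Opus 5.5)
Your overall structure matches the paper's: vanishing is immediate, and the converse is read off from Lemmas~\ref{lem:nonself} and~\ref{lem:fusionringselfdeg}. Non-simplicity then reduces to showing that, in the nondegenerate self-dual case, one cannot have $\det(N_X)=\det(N_Y)=\det(N_Z)=0$, i.e.\ $e^2=df=bi=ch$.

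\textbf{The gap.} That last step is the entire content of the forward direction, and you do not prove it. You only say that substituting (2c) and reducing modulo $p$ or $q$ ``should give the contradiction''. The suggested route is also doubtful as stated. From (2c) one finds $i=p\bigl(c^2+1-cp\tau-p^2(b^2+c^2-\kappa)\bigr)$ and $h=q\bigl(q^2(b^2+c^2-\kappa)+bq\tau-b^2-1\bigr)$. Here $e^2=\lambda^2p^2q^2$, not $pq\lambda^2$ as you wrote. Reducing $bi=e^2$ modulo $p$ and $ch=e^2$ modulo $q$ then only gives $p\mid b(c^2+1)$ and $q\mid c(b^2+1)$. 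These carry no information when $p=q=1$. There is also a small slip in the converse: in the family of Lemma~\ref{lem:fusionringselfdeg} one has $Y^2=\mathbf 1$. So the determinants that must vanish are those of $N_X$ and $N_Z$ (each has two equal rows), not $\det(N_Y)$.

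\textbf{The missing idea.} Exploit the symmetry of the hypotheses instead of the parametrization. Because $bi=ch=e^2>0$, you can relabel so that $Y$, respectively $Z$, plays the role of $X$. The tuple becomes $(g,d,h,b,e,i,a,c,f,j)$, respectively $(j,i,f,h,e,c,g,d,b,a)$, and this is again a self-dual ring in the nondegenerate vanishing case. So Lemma~\ref{lmm:2a} applies to all three labellings and gives $\gcd(b,c)=\gcd(d,h)=\gcd(i,f)=1$. Together with $bi=ch=df$, this yields $b\mid h\mid f\mid b$ and $c\mid i\mid d\mid c$, hence $b=h=f$ and $c=i=d$. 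Then $\lambda=\gcd(b,c)=1$, $p^2=b$, $q^2=c$, and $qc-pb=q^3-p^3=1$, which is impossible for positive integers $p,q$. With this argument inserted, your proof becomes the paper's.
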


In particular, no simple fusion ring of rank four is Burnside. 
Indeed, on the one hand, a nonpointed Burnside fusion ring of rank four is non-simple by Theorem \ref{thm:Burnside}. 
On the other hand, a pointed fusion ring of rank four is isomorphic to $\Z[G]$ for some group $G$ of order four. 
It therefore cannot be simple since $G$ contains a subgroup of order two. 

\section{The non-self-dual setting} \label{sec:nonselfdual}

Recall that throughout the classification, the multiplication is commutative and
satisfies the unit, duality, and Frobenius reciprocity axioms. 
Also, the structure constants are nonnegative integers and $\det(N_X)=0$. Thus, $\mathcal{R}$ is a fusion ring if and only if the associativity axiom holds, or, equivalently, the multiplication matrices of $X, Y$, and $Z$ pairwise commute. 

Suppose in this section that $\mathcal{R}$ is not self-dual and we determine explicit necessary and sufficient conditions on the structure constants that turn $\mathcal{R}$ into a fusion ring.

\begin{lemma}\label{lem:singular-dual}
If $\mathcal R$ is a fusion ring, then $2^*=2$.
\end{lemma}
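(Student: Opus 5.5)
We argue by contradiction: suppose $\mathcal R$ is a fusion ring, $\det(N_X)=0$ and $2^*\neq 2$. Since $\mathbf 1^*=\mathbf 1$ and duality is an involution on $\{1,2,3,4\}$, we have $2^*\in\{3,4\}$. Relabelling $Y$ and $Z$, we may assume $X^*=Y$, so $Z^*=Z$. Then $N_Y=N_X^T$, and in particular $\det N_Y=\det N_X=0$.

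The first step is to write $N_X$ explicitly using Frobenius reciprocity and commutativity. The first row is $(0,1,0,0)$, since $X\cdot\mathbf 1=X$. The column indexed by $\mathbf 1$ has a single $1$, in the row of $X^*=Y$. The remaining entries are $N_{2jk}$ for $j,k\in\{2,3,4\}$. Because the tensor $N_{ijk}=N_{i^*kj}=N_{kj^*i}$ has the full $S_3$-type symmetry up to duals, these reduce to a few parameters. The basic ones are $N_{XXX}=N_{X X Y^*}$-type constants, $N_{XXY}$, $N_{XXZ}$, $N_{XYZ}$ and $N_{XZZ}$.

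The second step is to compute $\det N_X$. Expanding along the first row and then the first column, one finds
\[
\det N_X = \pm\det\begin{pmatrix} N_{X X X} & N_{XXZ}\\ N_{XZX} & N_{XZZ}\end{pmatrix},
\]
the $2\times2$ minor obtained by deleting the rows of $\mathbf 1$ and $Y$ and the columns of $\mathbf 1$ and $X$ (up to the exact index bookkeeping). So vanishing means that this $2\times 2$ block, whose entries are nonnegative integers, is singular.

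The third step uses commutativity of $N_X$ with $N_X^T$, i.e. normality of $N_X$, which follows from associativity. It also uses commutativity with $N_Z$. Here is the key spectral idea. $N_X$ is a normal matrix, so its eigenvalues are the character values $\chi(X)$. Moreover $\chi(Y)=\overline{\chi(X)}$, and $\sum_\chi |\chi(X)|^2/\dim$-type orthogonality holds. Since $\det N_X=0$, some character $\chi$ has $\chi(X)=0=\chi(Y)$. Column orthogonality of the $4\times4$ character table against the trivial character gives
\[
1+\chi(Z)\FPdim(Z)=0,
\]
because the $X$ and $Y$ terms vanish. Orthogonality of $\chi$ with itself gives
\[
1+|\chi(Z)|^2 = \FPdim(\mathcal R)/n_\chi>0.
\]
Using $\chi(Z)\in\mathbb R$ (as $Z$ is self-dual), we get $\chi(Z)=-1/\FPdim(Z)$. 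Applying $\chi$ to the relation $Z\cdot Z=\mathbf 1+\sum N_{ZZk}X_k$ then yields a rational-integer constraint. Combined with algebraic integrality of $\chi(Z)$, this forces $\FPdim(Z)=1$, so $Z$ is invertible with $Z^2=\mathbf 1$.

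Finally, if $Z$ is invertible of order $2$, the set $\{\mathbf 1,Z\}$ acts on $\{X,Y\}$. Either $ZX=X$, or $ZX=Y$ and then $ZX=X^*$. Writing $X^2=\alpha Y+\beta X+\gamma Z+\dots$ and testing with $\chi$, where $\chi(X)=0$, $\chi(Z)=-1$ and $\chi(XZ)=0$, one checks that $N_X$ is then invertible, e.g. $XY=\mathbf 1+Z+mX+mY$-type rules give a nonzero determinant. This contradicts the hypothesis.

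The main obstacle is the third step, namely forcing $\FPdim(Z)=1$ from the character relation. If that fails, the fallback is brute force: impose $[N_X,N_X^T]=0$ and $[N_X,N_Z]=0$ on the parametrized matrices, together with the singularity of the $2\times2$ block. Then show that nonnegativity forces $N_{XXX}=N_{XXZ}=\dots=0$, which makes $N_X$ a permutation-like matrix with nonzero determinant.
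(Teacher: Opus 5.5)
You set the problem up correctly, and the approach is essentially the paper's. A character $\chi$ with $\chi(X)=\chi(Y)=0$ is the same object as the common kernel vector of $N_X$ and $N_X^T$ used there, and orthogonality with the Frobenius--Perron character does give $\chi(Z)=-1/\FPdim(Z)$. However, two of the steps that follow do not work as written. Below I use the paper's indexing: $X=X_2$, $Y=X_3=X^*$, $Z=X_4$.

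\textbf{First gap: $Z^2$ does not force $\FPdim(Z)=1$.} Because $\chi(X)=\chi(Y)=0$, applying $\chi$ to $Z^2$ only gives $\chi(Z)^2=1+N_{444}\chi(Z)$. Writing $z=\FPdim(Z)$, this is just $z^2=1+N_{444}z$. Then $-1/z$ is automatically an algebraic integer, being the other root of $T^2-N_{444}T-1$. Nothing here excludes $N_{444}\geq 1$; for instance $z$ could be the golden ratio. The missing step is to apply $\chi$ to $XX^*=XY=\mathbf 1+N_{232}X+N_{233}Y+N_{234}Z$. This gives $1+N_{234}\chi(Z)=0$, so $z=N_{234}$ is a positive integer. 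Then $z(z-N_{444})=1$ forces $z=1$ and $N_{444}=0$. The paper gets the same relation $1+N_{444}N_{234}=N_{234}^2$ from $N_XN_Z=N_ZN_X$.

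\textbf{Second gap: the final contradiction cannot be ``$N_X$ is invertible''.}
\begin{itemize}
\item Applying $\chi$ to $X^2$ and to $XZ$ gives $N_{224}=N_{244}=0$. This rules out $ZX=Y$, where $N_{224}=N_{243}=1$.
\item Taking Frobenius--Perron dimensions then gives $XZ=X$.
\item In this surviving case, the rows of $N_X$ indexed by $\mathbf 1$ and $Z$ are both $(0,1,0,0)$. So $\det N_X=0$ holds, and there is nothing to contradict.
\end{itemize}
The contradiction has to come from associativity. Set $\alpha=N_{222}$ and $\beta=N_{223}$, so that $X^2=\alpha X+\beta Y$, $XY=\mathbf 1+\alpha X+\alpha Y+Z$, and $XZ=X$. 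Taking Frobenius--Perron dimensions gives $x=\alpha+\beta$ and $x^2=2+2\alpha x$. Hence $\beta^2=\alpha^2+2$, which has no integer solutions, since two squares never differ by $2$. This is exactly the paper's relation $N_{223}^2=2+N_{222}^2$, which it obtains from $N_XN_Y=N_YN_X$.

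Your brute-force fallback aims at the same wrong endpoint, namely an invertible, permutation-like $N_X$. So it would not close the gap either.
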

\begin{proof}
Otherwise, after interchanging $Y$ and $Z$ if needed, we have $Y=X^*$
and $Z=Z^*$. Commutativity, duality, and Frobenius reciprocity axioms give
\[
 N_X=\begin{pmatrix}
 0&1&0&0\\
 0&N_{222}&N_{223}&N_{224}\\
 1&N_{222}&N_{222}&N_{234}\\
 0&N_{242}&N_{224}&N_{244}
 \end{pmatrix},\qquad N_Y=N_X^T, \; \; \text{and} \; \; N_Z=\begin{pmatrix} 
0 & 0 & 0 & 1 \\ 
0 & N_{234} & N_{224}  &N_{244} \\ 
0 & N_{224} & N_{234} & N_{244} \\ 
1 & N_{244} & N_{244} & N_{444}
 \end{pmatrix}. 
\]
Given a real column vector $\mathbf{v}$, we set as usual $\lVert \mathbf{v}\rVert=\sqrt{\mathbf{v}^T\mathbf{v}}$ to be its Euclidean norm. 
Since $N_X$ commutes with $N_Y$, and so with its transpose, we get
\[ \lVert N_X\mathbf{v} \rVert^2= (N_X\mathbf{v} )^T N_X\mathbf{v} =\mathbf{v}^TN_X N_X^T\mathbf{v}= \lVert N_X^T \mathbf{v}\rVert^2\] for every real column vector $\mathbf{v}$.
Their kernels therefore coincide. 
Since $N_X$ has determinant zero, this kernel then contains a nonzero vector
$\mathbf{v}=(v_1,v_2,v_3,v_4)^T$. 
The first rows of $N_X$ and $N_Y$ give $v_2=v_3=0$, and the remaining
equations are $N_{224}v_4=0, \; v_1+N_{234}v_4=0$, and $N_{244}v_4=0$.
It then follows that $v_4\ne0$ and $N_{224}=N_{244}=0$.

The relation $N_XN_Z=N_ZN_X$ leads to $1+N_{444}N_{234}=N_{234}^2$ and $1+N_{444}N_{242}=N_{242}^2$. 
Hence, $N_{234}(N_{234}-N_{444})=1$, that is, $N_{234}=1$ and $N_{444}=0$. It follows that $N_{242}=1$. 

The relation $N_XN_Y=N_YN_X$ gives $N_{223}^2=2+N_{222}^2$, that is, $(N_{223}-N_{222})(N_{223}+N_{222})=2$.
Thus, $N_{223}+N_{222}=2$ and $N_{223}-N_{222}=1$, leading to $N_{223}=3/2$, a contradiction.
\end{proof}

\begin{lemma}\label{lem:nonself}
Suppose that $2^*=2$. Then $\mathcal R$ is a fusion ring if and only if
\[
 N_X=\begin{pmatrix}
 0&1&0&0\\1&N_{222}&1&1\\0&1&0&0\\0&1&0&0
 \end{pmatrix},\quad
 N_Y=\begin{pmatrix}
 0&0&1&0\\0&1&0&0\\0&0&0&1\\1&0&0&0
 \end{pmatrix},\; \; \text{and} \; \; 
 N_Z=\begin{pmatrix}
 0&0&0&1\\0&1&0&0\\1&0&0&0\\0&0&1&0
 \end{pmatrix},
\]
where $N_{222}\in\mathbb Z_{\geq0}$ is arbitrary. Such a ring is never simple and it is integral if and only if $N_{222}=2$. In that case, the basic Frobenius--Perron dimensions of $X,Y$, and $Z$ are $3,1$, and $1$, respectively.
\end{lemma}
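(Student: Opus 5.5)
Since $\mathcal R$ is not self-dual and $2^*=2$, the involution $i\mapsto i^*$ fixes $1$ and $2$, so it must swap $3$ and $4$, i.e. $Z=Y^*$. I will first write the general shape of the multiplication matrices forced by commutativity, duality and Frobenius reciprocity, exactly as in the proof of Lemma~\ref{lem:singular-dual}. Then $N_Z=N_Y^T$ and $N_X=N_X^T$. Write $N_{222}=a$, $N_{223}=N_{224}=u$ (equal because $N_{223}=N_{2^*32}=N_{242^*}$ after using commutativity and $4=3^*$), $N_{233}=s$, $N_{234}=t$, and so on. As before, $N_Y$ commutes with its transpose $N_Z$, so $N_Y$ is normal, and $N_X$ is symmetric.

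The key step is to use $\det(N_X)=0$ together with the normality of the whole commuting family. The matrices $N_X$, $N_Y$, $N_Z$ are commuting normal matrices, so they are simultaneously unitarily diagonalizable, and the kernel of $N_X$ is invariant under $N_Y$ and $N_Z$. Let $\mathbf v=(v_1,v_2,v_3,v_4)^T\neq0$ lie in $\ker N_X$. The first row of $N_X$ gives $v_2=0$. The remaining equations read $v_1+u(v_3+v_4)=0$, $N_{233}v_3+N_{234}v_4=0$ and $N_{243}v_3+N_{244}v_4=0$.

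By the symmetry of the tensor under $3\leftrightarrow4$ (complex conjugation of characters), I expect to show that $\ker N_X$ can be taken to contain a vector with $v_3=v_4$. From this, the positivity of the entries forces $N_{233}=N_{234}=N_{243}=N_{244}=0$, and then $v_1=-2uv_3$. Alternatively, I can use the character interpretation directly. Since $\mathcal R$ is commutative, the eigenvalue of $N_X$ at a character $\chi$ is $\chi(X)$. Applying the relation $X^2=\mathbf 1+aX+u(Y+Z)+\dots$ at a character with $\chi(X)=0$ gives $0=1+u(\chi(Y)+\chi(Z))+\dots$, which constrains the entries.

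Having killed these constants, I will impose $N_XN_Y=N_YN_X$ and $N_YN_Z=N_ZN_Y$ entrywise. This yields small Diophantine equations of the kind seen in Lemma~\ref{lem:singular-dual}, such as $u^2=1$ and $N_{333}N_{334}$-type products summing to $0$ or $1$. Solving them should give $u=1$ and the $Y,Z$ block as the permutation matrices of $C_3$, with $XY=X$ and $XZ=X$. This is precisely the ring in the statement, with $N_{222}=a$ free.

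For the converse I will directly check that the three displayed matrices pairwise commute, which is a finite computation. Non-simplicity follows because $\{\mathbf 1,Y,Z\}$ spans a fusion subring isomorphic to $\Z[C_3]$. For integrality, the vector $(1,\lambda,1,1)$ is a Perron--Frobenius eigenvector of $N_X$ when $\lambda^2=3+N_{222}\lambda$. Thus $\FPdim(X)=\frac{N_{222}+\sqrt{N_{222}^2+12}}{2}$, which is an integer exactly when $N_{222}^2+12$ is a square, that is $N_{222}=2$, giving $\FPdim(X)=3$. The dimensions of $Y$ and $Z$ are $1$ since they are invertible.

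The main obstacle is the middle step: extracting enough vanishing of structure constants from $\det(N_X)=0$ before the commutation equations become tractable. I would try the kernel-vector argument first and fall back on the character evaluation if the kernel does not split symmetrically.
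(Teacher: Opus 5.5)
You have correctly located the crux, but the forward direction has a genuine gap there, and the mechanism you propose for it cannot work.

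\textbf{What the determinant actually gives.} Since $(XY)^*=XZ$, duality gives $N_{244}=N_{233}=:s$ and $N_{243}=N_{234}=:t$. Expanding then yields $\det(N_X)=t^2-s^2$, so the hypothesis is exactly $s=t$. With $s=t$, the vector $(0,0,1,-1)^T$ always lies in $\ker N_X$. When $s=t>0$, the kernel is precisely this line.

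\textbf{Why the symmetry argument fails.} The $3\leftrightarrow 4$ symmetry only says that $\ker N_X$ splits into a symmetric and an antisymmetric part. It gives no reason for the symmetric part to be nonzero, and in the case $s=t>0$ it is zero. So the determinant condition plus symmetry is fully compatible with $s=t>0$, and your step ``positivity forces $N_{233}=N_{234}=N_{243}=N_{244}=0$'' has no input. Your character fallback stops at ``constrains the entries'' without naming the relation that kills $s$. As written, nothing establishes the vanishing on which your subsequent Diophantine analysis rests.

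\textbf{How to close the gap.} You need to use commutation at this point. Any of the following works.
\begin{itemize}
\item \emph{Kernel invariance.} $\ker N_X$ is $N_Y$-invariant, yet $N_Y(0,0,1,-1)^T$ has first coordinate $1$, while every vector on the line $\mathbb{C}(0,0,1,-1)^T$ has first coordinate $0$. This excludes $s=t>0$, in the same spirit as Lemmas~\ref{lem:singular-dual} and~\ref{lmm:2a}.
\item \emph{Characters.} Take a character $\chi$ with $\chi(X)=0$; it exists because the commuting normal matrices are simultaneously diagonalizable and $\det(N_X)=0$. The relation $X^2=\mathbf{1}+aX+u(Y+Z)$ gives $\chi(Y)+\chi(Z)=-1/u$. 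Then $XY=uX+sY+sZ$ gives $0=s(\chi(Y)+\chi(Z))=-s/u$, so $s=0$.
\item \emph{The paper's route.} Compare entries of $N_XN_Y=N_YN_X$ directly. This gives $u(N_{334}-N_{333})=1$ and $s(N_{334}-N_{333})=0$, hence $u=1$ and $s=0$.
\end{itemize}

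\textbf{The rest of your plan.} Once $s=t=0$ is established, your argument goes through. The $(2,3)$-entries of $N_XN_Y=N_YN_X$ give $u(u-2N_{333})=1$, so $u=1$ and $N_{333}=0$, and then $N_{334}=1$. Your converse, non-simplicity and Perron--Frobenius integrality arguments are fine as written.
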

\begin{proof}
For the converse, the displayed matrices easily satisfy the axioms of a fusion ring. Also, $Y^2=Z$, $YZ=\mathbf{1}$, and $Z^2=Y$, so
$\{\mathbf{1},Y,Z\}$ spans a proper nontrivial fusion subring of $\mathcal{R}$.

We now treat the forward direction. Since $2^*=2$, we must have $X^*=X, Y^*=Z,$ and $Z^*=Y$. 
The commutativity, duality, and Frobenius reciprocity axioms give \[N_X= \begin{pmatrix} 
0 & 1 & 0 & 0 \\ 
1 & N_{222} & N_{223} &N_{223} \\ 
0 & N_{223} & N_{233} & N_{234} \\ 
0 & N_{223} & N_{234} & N_{244}
 \end{pmatrix}, \qquad N_Y=\begin{pmatrix} 
0 & 0 & 1 & 0 \\ 
0 & N_{223} &N_{233} & N_{234} \\ 
0 & N_{234} & N_{333} & N_{334} \\ 
1 & N_{233} &N_{333} & N_{333} 
 \end{pmatrix}, \; \; \text{and} \; \;  N_Z=N_Y^T.\] 
 Comparing the $(4,1)$-entries of $N_XN_Y=N_YN_X$ gives $N_{244}=N_{233}$. 
The determinant condition now yields
$0=\det(N_X)=N_{234}^2-N_{233}^2$, hence $N_{233}=N_{234}$. 
Comparing the different entries of the equality $N_XN_Y=N_YN_X$ leads to $N_{233}N_{333}=N_{233}N_{334}$ and \[N_{223}^2+2N_{233}^2=1+N_{222}N_{233}+2N_{333}N_{223}=N_{222}N_{233}+N_{223}N_{334}+N_{223}N_{333}.\]
We get $N_{223}(N_{334}-N_{333})=1$, that is, $N_{223}=1$ and $N_{334}=1+N_{333}$. 
Since $N_{233}N_{333}=N_{233}N_{334}$, we conclude $N_{233}=0$; whence $N_{333}=0$ and $N_{334}=1$. 
The equivalence is now established.

We have $\FPdim(Y)=\FPdim(Z)=1$. 
The eigenvalues of $N_X$, which are algebraic integers, are
\[ 0,\quad0,\quad \frac{N_{222}+\sqrt{N_{222}^2+12}}2,\quad \frac{N_{222}-\sqrt{N_{222}^2+12}}2.\]
The positive eigenvalue is integral exactly when $N_{222}^2+12=l^2$ for
some positive integer $l$. The factorization
$(l-N_{222})(l+N_{222})=12$, together with a straightforward case analysis, shows that the unique solution in positive integers is $(l,N_{222})=(4,2)$. Note that $\FPdim(X)=3$.
\end{proof}

\section{The self-dual setting: the degenerate case} \label{sec:selfdualdeg}

From now on, we assume that $\mathcal{R}$ is self-dual and we will use the matrices defined in \eqref{eq:matrices} throughout the text.

Recall that $\mathcal{R}$ is a fusion ring if and only if the multiplication matrices of $X, Y$, and $Z$ commute pairwise. 
We also recall that $\det(N_X)=0$. Expanding the determinant of $N_X$ along the first row, then the resulting $3\times 3$ determinant along the first column, gives $0=\det(N_X)=e^2-df$. 

The goal of this section consists in determining when $\mathcal{R}$ is a fusion ring in the degenerate case $df=0$, which forces $e=0$. 
Swapping $Y$ and $Z$ sends the tuple $(a,b,c,d,e,f,g,h,i,j)$ to $(a,c,b,f,e,d,j,i,h,g)$. 
As we are interested in isomorphism classes of vanishing fusion rings, we may assume that $(d,h)$ is less than or equal to $(f,i)$ with respect to the usual lexicographic order.  
In particular, it forces $d=0$. 

\begin{lemma}\label{lem:fusionringselfdeg}
The free module $\mathcal R$ is a fusion ring if and only if $c^2+f^2=2+af+cj$ and 
\[N_X= \begin{pmatrix} 
0 & 1 & 0 & 0 \\ 
1 & a & 1 & c \\ 
0 & 1 & 0 & 0 \\ 
0 & c & 0 & f
 \end{pmatrix}, \; \; N_Y=\begin{pmatrix} 
0 & 0 & 1 & 0 \\ 
0 & 1& 0 &0 \\ 
1 & 0 & 0 & 0 \\ 
0 & 0 &0 & 1
 \end{pmatrix}, \; \; \text{and} \; \;  N_Z=\begin{pmatrix} 
0 & 0 & 0 & 1 \\ 
0 & c &0 & f \\ 
0 & 0 & 0 & 1 \\ 
1 & f & 1 & j
 \end{pmatrix}. \]  
 In other words, $\mathcal{R}$ is a fusion ring if and only if $(a,b,c,d,e,f,g,h,i,j)=(a,1,c,0,0,f,0,0,1,j)$ and $c^2+f^2=2+af+cj$.  
 In particular, $\mathcal{R}$ cannot be a simple fusion ring. 
\end{lemma}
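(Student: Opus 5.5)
We work directly with the matrices \eqref{eq:matrices} under $d=e=0$, and we use that $\mathcal R$ is a fusion ring exactly when $N_X$, $N_Y$, $N_Z$ commute pairwise. The sufficiency direction is a routine check: for the displayed matrices we verify $N_XN_Y=N_YN_X$, $N_YN_Z=N_ZN_Y$ and $N_XN_Z=N_ZN_X$. The last product should reduce to the single relation $c^2+f^2=2+af+cj$. Non-simplicity is also immediate, since $Y^2=\mathbf 1$. Thus $\{\mathbf 1,Y\}$ spans a proper nontrivial fusion subring isomorphic to $\mathbb Z[C_2]$.

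For the forward direction, we substitute $d=e=0$ into \eqref{eq:matrices} and compare entries of $N_XN_Y=N_YN_X$ one at a time, looking for products that force small values.
\begin{itemize}
\item The $(1,\cdot)$ and $(\cdot,1)$ entries are the simplest, since the first rows are unit vectors. They should give linear relations such as $b\cdot(\text{something})=1$ or $g=\dots$.
\item Explicitly, the $(1,2)$ entry of $N_XN_Y$ is $b$. The corresponding entry of $N_YN_X$ is the third row of $N_X$ times the second column, namely $1\cdot 1+\dots$. This kind of comparison should pin down relations among $b$, $g$ and $h$.
\end{itemize}
We expect to extract equations of the form $b^2=1+bg$ or $b(b-g)=1$, which force $b=1$ and $g=0$. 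Equations like $bh=0$ or $hb=\ldots$ should then give $h=0$. Once $b=1$, $g=h=0$, the relation $N_YN_Z=N_ZN_Y$ should yield $i=1$, because $N_Y$ must then be the permutation-like matrix of an invertible element with $YZ=Z$.

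The main obstacle is organizing the quadratic entry comparisons so that each unknown is forced in turn. We plan to handle this with the factorization trick already used in Lemmas \ref{lem:singular-dual} and \ref{lem:nonself}. That is, we rewrite each relevant equation as a product of nonnegative integers equal to $1$ or $0$, which rules out every case except the claimed one.

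The degenerate assumption $d=0$ makes $N_Y$ very sparse. We hope this means a careful order (first $b$, then $g$ and $h$, then $i$) suffices. If it does not, we fall back on the kernel argument from Lemma \ref{lem:singular-dual}: the vector $(0,0,\ast,\ast)$ annihilated by $N_X$ should force $e$-related entries to vanish. Finally, with $(b,d,e,g,h,i)=(1,0,0,0,0,1)$ fixed, the remaining commutation $N_XN_Z=N_ZN_X$ leaves exactly the stated relation.
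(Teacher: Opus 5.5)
There is a genuine gap, and it sits exactly at the step you flag as the ``main obstacle''. With $d=e=0$, pairwise commutation of $N_X,N_Y,N_Z$ is equivalent to the six equations $b^2=1+bg+ch$, $bc=bh+ci$, $hf=0$, $c^2+f^2=1+af+bi+cj$, $bf=if$ and $h^2+i^2=1+gi+hj$. The first-row and first-column entries you propose to start from carry no information, since each commutator is skew-symmetric with zero first row and column. There is no relation $b^2=1+bg$ or $bh=0$. Because of the term $ch$, you cannot determine $b$ before $h$, so the order ``first $b$, then $g$ and $h$, then $i$'' never gets started.

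If $f>0$, then $hf=0$ and $bf=if$ give $h=0$ and $i=b$ at once, and your endgame works: $b(b-g)=1$, then $i=1$, then the single relation $c^2+f^2=2+af+cj$. But if $f=0$, no manipulation of these equations can give $h=0$, because it is false. For instance, $(a,b,c,d,e,f,g,h,i,j)=(a,2,1,0,0,0,1,1,0,0)$ satisfies all six equations. It therefore defines a fusion ring with $b=2$ and $h=1$; it is the image of $(a,1,2,0,0,0,0,0,1,1)$ under $Y\leftrightarrow Z$. The factorization trick cannot rescue this. Neither can the kernel argument, since for $f=0$ the kernel of $N_X$ is two-dimensional.

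What you are missing is that the lemma relies on the normalization fixed just before it: $(d,h)\le(f,i)$ lexicographically. This gives $d=0$ and, when $f=0$, also $h\le i$. The paper's argument for $f=0$ runs as follows. The first and fourth equations force $b,c\ge 1$, and $b(b-g)-ch=1$ gives $\gcd(b,c)=1$. Then $bc=bh+ci$ yields $c\mid h$ and $b\mid i$; writing $h=cr$ and $i=bs$, one gets $r+s=1$. The choice $(r,s)=(1,0)$ gives $h=c>0=i$, contradicting $h\le i$, so $h=0$ and $i=b$. Only after this, in both cases $f>0$ and $f=0$, does the first equation become $b(b-g)=1$. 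That forces $b=1$, $g=0$ and $i=1$, after which the fourth equation is exactly $c^2+f^2=2+af+cj$.
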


\begin{proof}
Since $e=0$, the multiplication matrices commute if and only if 
\[ b^2=1+bg+ch,\; \; bc=bh+ci,\; \; hf=0, \; \; c^2+f^2=1+af+bi+cj,\; \; bf=if, \; \; h^2+i^2=1+gi+hj\] and the backward direction of the first part of the lemma follows. 

We now assume that $\mathcal{R}$ is a fusion ring and we claim that $h=0$ and $b=i$. 
This is clear if $f>0$ since $hf=0$ and $bf=if$. 
If $f=0$, the first and fourth equations above give $b,c>0$ and the first also gives
$\gcd(b,c)=1$. The second then implies that $c$ divides $h$ and $b$ divides $i$.
Write $h=cr$ and $i=bs$ with $r,s\in\mathbb Z_{\geq0}$, and note that the second equation yields $r+s=1$.
The case $(r,s)=(1,0)$ being excluded since $h\leq i$, we actually have $(r,s)=(0,1)$, that is, $h=0$ and $i=b$. 

Thus, $b(b-g)=1$; whence $b=1$ and $g=0$. All remaining displayed equations above reduce to
$c^2+f^2=2+af+cj$, proving the equivalence.
Finally, $\mathcal{R}$ cannot be simple since $Y^2=\mathbf{1}$. 
\end{proof}

Assume that $\mathcal{R}$ is a fusion ring. 
The matrix $N_Z$ has determinant zero since its first and third rows coincide. 
Since we are interested in isomorphism classes of fusion rings, we can therefore swap $X$ and $Z$ if needed, that is, interchange the tuples $(a,b,c,d,e,f,g,h,i,j)=(a,1,c,0,0,f,0,0,1,j)$ and $(j,i,f,h,e,c,g,d,b,a)=(j,1,f,0,0,c,0,0,1,a)$.
Lemma \ref{lem:fusionringselfdeg} states that $(c,f)\neq (0,0)$ (otherwise, $XZ$ would be zero), and so we can reduce our study to the case $f>0$. 

We characterize below the integral case of Lemma \ref{lem:fusionringselfdeg} up to isomorphism. 

\begin{lemma}\label{lem:one-zero}
Assume that $\mathcal{R}$ is a fusion ring and that $f>0$. 
Then $\mathcal{R}$ is integral precisely when $(a,c,f,j)=(0,1,1,0), (1,0,2,1)$ or $(1,0,2,5)$. 
In these three cases, the basic Frobenius-Perron dimensions in the basis $\{\mathbf{1},X,Y,Z\}$ are then $(1,2,1,2), (1,2,1,3)$, and $(1,2,1,6)$, respectively. 
\end{lemma}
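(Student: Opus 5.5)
The plan is to reduce integrality to a small Diophantine problem on the Frobenius--Perron dimensions, using the explicit form of Lemma~\ref{lem:fusionringselfdeg}. There $Y^2=\mathbf 1$, $YX=X$ and $YZ=Z$. Put $x=\FPdim(X)$ and $z=\FPdim(Z)$. Since $\FPdim$ is a ring character with $\FPdim(Y)=1$, applying it to $X^2$, $XZ$ and $Z^2$ gives
\[
x^2=2+ax+cz,\qquad xz=cx+fz,\qquad z^2=2+fx+jz,
\]
with $x,z>0$. The middle equation can be rewritten as $(x-f)(z-c)=cf$. Since $\mathcal R$ is a fusion ring, we also have $c^2+f^2=2+af+cj$.

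For the converse direction I will simply check the three tuples. Each one satisfies $c^2+f^2=2+af+cj$, so Lemma~\ref{lem:fusionringselfdeg} applies. I will then exhibit $(x,z)=(2,2),(2,3),(2,6)$ as a common positive eigenvector, so these are the stated dimensions by Perron--Frobenius.

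For the forward direction I assume $x,z\in\mathbb Z_{>0}$ and split on $c$.

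\emph{Case $c=0$.} The middle equation gives $x=f$. Then $f^2=2+af$, so $f\mid 2$, and $a\ge 0$ forces $f=2$ and $a=1$. The last equation becomes $z(z-j)=6$ with $z>j\ge 0$. Its solutions are $(z,j)=(6,5)$ and $(3,1)$; the pair $(2,-1)$ is excluded by $j\ge0$. This yields $(1,0,2,1)$ and $(1,0,2,5)$, and the relation $c^2+f^2=2+af+cj$ holds automatically.

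\emph{Case $c>0$.} Here I expect to show $c=f=1$. Once that is known, the relation gives $a+j=0$, hence $a=j=0$, and then $x=z=2$.

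\begin{itemize}
\item First I determine the signs in $(x-f)(z-c)=cf>0$. If $x<f$ and $z<c$, then $xz=cx+fz>2xz$, which is impossible. So $x>f$ and $z>c$. Write $s=x-f$ and $t=z-c$, so $st=cf$ with $s,t\ge1$.
\item Next I substitute $x=f+s$ and $z=c+t$ into the two quadratic equations. This gives $a=(x^2-2-cz)/x$ and $j=(z^2-2-fx)/z$, so integrality forces $x\mid 2+cz$ and $z\mid 2+fx$.
\item Then I feed these into $c^2+f^2=2+af+cj$. Using $xz=cx+fz$ to simplify $af+cj$, the goal is to reach an identity bounding $x$ and $z$ in terms of $c$ and $f$, ideally one showing $x\mid 2z$ and $z\mid 2x$ or similar.
\item Combining with $x>f$ and $z>c$, that identity should force $c=f=1$.
\end{itemize}

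The main obstacle is this last elimination. If it resists, my fallback is the other ring characters. The quotient by $Y-\mathbf 1$ is a rank-three based ring whose $3\times3$ matrix for $X$ has eigenvalue $x$. In the integral case its characteristic polynomial splits over $\mathbb Z$, and the two remaining roots, with trace $a$ and product determined by $c$ and $f$, give extra integrality constraints that bound $c$ and $f$.
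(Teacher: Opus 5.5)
\textbf{Gap in the case $c>0$.} Your converse and your case $c=0$ are fine; the latter coincides with the paper's subcase $x=f$, $c=0$. The case $c>0$, however, is the heart of the lemma, and there you give only a list of intended steps. The step you plan to rely on cannot work, because the relation $c^2+f^2=2+af+cj$ carries no information beyond the three dimension equations. To see this, take $a=(x^2-2-cz)/x$, $j=(z^2-2-fx)/z$ and use $xz=cx+fz$. This gives $af+cj+2=\bigl(fx^2(z-c)+cz^2(x-f)\bigr)/(xz)$. Now write $x=f+s$, $z=c+t$ with $st=cf$. The numerator expands to $(c^2+f^2)(2cf+ft+cs)=(c^2+f^2)xz$. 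So ``feeding'' the dimension equations into that relation only returns a tautology, and no bound on $x,z$ can come out of it.

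Your fallback also rests on a false claim. Integrality makes the Perron--Frobenius root an integer; it does not make the characteristic polynomial split over $\mathbb Z$. For $(a,c,f,j)=(0,1,1,0)$ the cubic factor is $(T-2)(T^2+T-1)$.

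\textbf{The missing divisibility.} You were one line away from it. The middle equation reads $fz=x(z-c)$, so $x\mid fz$. Combined with $x\mid 2+cz$, this gives $x\mid f(2+cz)-c\cdot fz=2f$. Symmetrically, $z\mid 2c$. The paper reads these off the constant terms of the characteristic polynomials of $N_X$ and $N_Z$. Together with your correct sign analysis $x>f$, $z>c$, this forces $x=2f$ and $z=2c$.

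\textbf{Finishing the argument.} You still need to show $c=f=1$. The outer dimension equations become $2f^2=1+af+c^2$ and $2c^2=1+f^2+jc$. Hence $\gcd(c,f)=1$ and $c^2+f^2+1=f(3f-a)=c(3c-j)$, so $cf\mid c^2+f^2+1$. Moreover $2<(c^2+f^2+1)/(cf)=(3f-a)/c=(3c-j)/f\le\min\{3f/c,3c/f\}\le 3$. The quotient is therefore $3$, which gives $a=3(f-c)$ and $j=3(c-f)$. Nonnegativity of both then forces $c=f$, hence $c=f=1$.

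Nonnegativity of both $a$ and $j$ is indispensable at this last step. The Markov pair $(c,f)=(1,2)$, with $(x,z)=(4,2)$ and $(a,j)=(3,-3)$, satisfies every other constraint, including $c^2+f^2=2+af+cj$.
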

\begin{proof}
It is easy to check that $\mathcal{R}$ is integral if $(a,c,f,j)$ is one of the three prescribed values, and that its basic Frobenius--Perron dimensions coincide with what was claimed. We now assume that $\mathcal{R}$ is integral and we show below that $(a,c,f,j)$ is one of these three prescribed values. 
For brevity, put $x=\FPdim(X)$ and $z=\FPdim(Z)$.
Note that $\FPdim(Y)=1$. 

The characteristic polynomials of $N_X$ and $N_Z$ are \[ T(T^3-(a+f)T^2+(af-c^2-2)T+2f) \; \; \mathrm{and} \; \; T(T^3-(j+c)T^2+(cj-f^2-2)T+2c),   \] respectively. 
Hence, $x$ must divide $2f$ and $2c$ is a multiple of $z$.
Write $2f=mx$ and $2c=nz$, where $m$ and $n$ are nonnegative integers. 
Note that $m$ is positive since $f>0$.
Next, the equations \[X^2=\mathbf{1}+aX+Y+cZ, \; \; XZ=cX+fZ, \; \; \text{and} \; \; Z^2=\mathbf{1}+fX+Y+jZ \] lead to 
\begin{equation} \label{eq: lemd=0f>0}
 x^2=2+ax+cz, \; \; \; \; xz=cx+fz, \; \;  \text{and} \; \; z^2=2+fx +jz. 
\end{equation}
Multiplying the second equation by $2$ implies $n+m=2$.

Suppose first that $(m,n)=(2,0)$. 
Then $f=x, c=0$, and the first equation in \eqref{eq: lemd=0f>0} gives $x(x-a)=2$; whence $x=f=2$ and $a=1$.
Plugging these constants in the third equation of \eqref{eq: lemd=0f>0} now leads to $z^2=jz+6$. 
Since $z$ is an integer, it means that $j^2+24=l^2$ for some integer $l$.
Again, the factorization $(l-j)(l+j)=24$, together with an easy case analysis, shows that either $j=1$ or $j=5$, which ends this case. 

Suppose now that $(m,n)=(1,1)$. 
Then $x=2f, z=2c$, and the first and third equations in \eqref{eq: lemd=0f>0} provide $1+af+c^2=2f^2$ as well as $1+f^2+jc=2c^2$. 
In particular, $\gcd(c,f)=1$ and
\[
 c^2+f^2+1=f(3f-a)=c(3c-j).
\]
Consequently, $c$ divides $3f-a$, and so $cf$ divides $c^2+f^2+1$.  We obviously have $c^2-2cf+f^2+1 >0$ and the nonnegativity of $a$ and $j$
gives
\[
 2<\frac{c^2+f^2+1}{cf}
   =\frac{3f-a}{c}=\frac{3c-j}{f}
   \leq\min\left\{\frac{3f}{c},\frac{3c}{f}\right\}\leq3. 
\]
Since $(3f-a)/c$ is an integer by the foregoing, we must have $a=3(f-c)$ and $j=3(c-f)$. The nonnegativity of $a$ and $j$ leads to $c=f$. 
This implies that $a=j=0$, and so $c=f=1$.
\end{proof}

\begin{rem} \label{rem:isod=0f>0}
For dimension reasons, the three integral fusion rings of Lemma \ref{lem:one-zero} are not isomorphic. 
In conclusion, there is one isomorphism class of integral vanishing fusion ring in the non-self-dual case (Lemma \ref{lem:nonself}) and there are three in the self-dual degenerate case (Lemma \ref{lem:one-zero}).
\end{rem}

\section{The self-dual setting: the nondegenerate case} \label{sec:selfdualnondegenerate}

We now study the nondegenerate case $df>0$ in this section and the next. 
Recall that the integer $qc-pb$ is nonnegative.
This section aims to prove Theorem \ref{thm:main} and to derive from it a characterization of the integral case. 
From now on, we use the constants defined in \eqref{eq:kappapq}. 

The structure of this section is as follows: the first subsection states a few intermediate lemmas allowing us to reduce the proof of Theorem \ref{thm:main} to establishing the equivalence between $(1)$ and $(2c)$. 
This equivalence constitutes the computational part of the proof. 

We show in the second subsection that the pairwise commutativity of the matrices $N_X, N_Y$, and $N_Z$ leads to strong arithmetic conditions, which turn out to be equivalent to two systems of two linear matrix equations. 
Solutions of the first and second systems are given by the matrices \[ \begin{pmatrix}
g & h  \\ 
h & i
 \end{pmatrix} \; \; \text{and} \; \; \begin{pmatrix} 
h & i  \\ 
i & j
 \end{pmatrix}, \] respectively. 
 Next, a linear algebra computation proves that these two systems have a unique solution, namely $cR-pK$ for the first and $-bR+qK$ for the second. 
 The theorem follows. 
 
The third and last subsection shows that the integral case of Theorem \ref{thm:main} precisely matches $\kappa=\lambda^2$, where $\lambda=\gcd(d,f)$ until the end of this text. 

\subsection{Reductions} \label{subsec:reductions}
Write $d=\lambda x$ and $f=\lambda y$.
The equality $e^2=df$ implies that $xy=(e/\lambda)^2$ is a perfect square.
The integers $x$ and $y$ being coprime, every prime exponent in $x$ or in $y$ is therefore even. 
Hence, $x=q^2$ and $y=p^2$ for some coprime positive integers $p$ and $q$. 
From now on, and until the end of this text, we will extensively use without further explanation the fact that $d=\lambda q^2, e=\lambda pq$, and $f=\lambda p^2$.   

\begin{lmm} \label{lmm:2a}
Assume that $\mathcal{R}$ is a fusion ring. Then $qc-pb=1$. 
\end{lmm}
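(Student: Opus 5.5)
The plan is to look at the kernel of $N_X$. Since $N_X$ is a symmetric matrix that commutes with both $N_Y$ and $N_Z$, these two matrices preserve $\ker N_X$. Once we know this kernel is a line, it is spanned by a common eigenvector of $N_Y$ and $N_Z$. Reading off one coordinate of that eigenvector relation will force $qc-pb$ to divide both $p$ and $q$.

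\emph{Step 1: compute $\ker N_X$.} Write $\mathbf v=(v_1,v_2,v_3,v_4)^T$ and use $d=\lambda q^2$, $e=\lambda pq$, $f=\lambda p^2$.
\begin{itemize}
\item The first row of $N_X$ gives $v_2=0$.
\item The third row then gives $dv_3+ev_4=\lambda q(qv_3+pv_4)=0$, so $qv_3+pv_4=0$.
\item The fourth row gives $\lambda p(qv_3+pv_4)=0$, which is the same condition.
\item The second row gives $v_1+bv_3+cv_4=0$.
\end{itemize}
These three independent conditions show that $\ker N_X$ is one-dimensional, regardless of whether $qc=pb$. It is spanned by
\[
\mathbf v=(qc-pb,\;0,\;p,\;-q)^T,
\]
which is nonzero because $p>0$.

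\emph{Step 2: get common eigenvalues.} Since $\mathcal R$ is a fusion ring, $N_Y$ commutes with $N_X$. If $N_X\mathbf w=0$, then $N_X N_Y\mathbf w=N_YN_X\mathbf w=0$, so $N_Y$ maps $\ker N_X$ to itself, and likewise for $N_Z$. Because this kernel is a line, there are scalars $\mu,\nu$ with $N_Y\mathbf v=\mu\mathbf v$ and $N_Z\mathbf v=\nu\mathbf v$. Each of $\mu,\nu$ is an eigenvalue of an integer matrix, hence an algebraic integer. Since $\mathbf v$ has rational entries, $\mu$ and $\nu$ are also rational, so they are rational integers.

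\emph{Step 3: compare first coordinates.} The first row of $N_Y$ is $(0,0,1,0)$ and the first row of $N_Z$ is $(0,0,0,1)$. Comparing first coordinates therefore gives
\[
p=\mu\,(qc-pb)\qquad\text{and}\qquad -q=\nu\,(qc-pb).
\]
If $qc-pb=0$, the first equation would give $p=0$, a contradiction. So $m:=qc-pb$ is a positive integer (recall it is nonnegative by our normalization). By the two displayed equations, $m$ divides both $p$ and $q$. Since $p$ and $q$ are coprime, $m=1$.

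The only delicate point is Step 1: we must check that the kernel is genuinely one-dimensional even in the a priori possible case where rows three and four of $N_X$ are proportional. This is handled because the relevant constraint $qv_3+pv_4=0$ comes out the same in both cases. The integrality of $\mu$ and $\nu$ is the other ingredient, and it is immediate from the algebraic-integer argument in Step 2.
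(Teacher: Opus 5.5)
Your proposal is correct and follows essentially the same argument as the paper: the kernel of $N_X$ is the line spanned by $(qc-pb,0,p,-q)^T$, it is preserved by $N_Y$ and $N_Z$, and comparing first coordinates together with algebraic integrality shows that $qc-pb$ divides the coprime integers $p$ and $q$. The only cosmetic difference is how one-dimensionality of the kernel is obtained: the paper notes that the leading $3\times 3$ minor of $N_X$ has determinant $-d\neq 0$, whereas you solve the row equations directly.
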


\begin{proof}
The leading $3\times3$ minor of $N_X$ has determinant $-d\ne0$.
Since $\det(N_X)=0$, the kernel of $N_X$ is one-dimensional and it is easy to check that it is spanned by $\mathbf{v}=(qc-pb, 0, p, -q)^T$.

Since $\mathcal{R}$ is a fusion ring, we know that the matrices $N_X, N_Y$, and $N_Z$ commute pairwise. 
Consequently, $N_Y\mathbf{v}$ and $N_Z\mathbf{v}$ lie in the kernel of $N_X$ as well, and are therefore collinear to $\mathbf{v}$.
The first coordinate of the vector $N_Y\mathbf{v}$ being $p>0$, we get $qc-pb\neq 0$ and a direct calculation yields \[ N_Y\mathbf{v}= \frac{p}{qc-pb}\mathbf{v} \; \; \; \; \text{and} \; \; \; \; N_Z\mathbf{v}= \frac{-q}{qc-pb}\mathbf{v}.  \]
The entries of matrices $N_Y$ and $N_Z$ being integers, it follows that their eigenvalues are algebraic integers. This forces $qc-pb > 0$ to divide both $p$ and $q$, and their coprimality gives $qc-pb=1$.
\end{proof}

We now show that the conditions $(1)$ and $(3)$ are equivalent. 

\begin{lmm} \label{lmm:(1)<=>(2)}
If $\mathcal{R}$ is a fusion ring, then it is necessarily simple. 
\end{lmm}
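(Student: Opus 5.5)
The plan is to enumerate directly every candidate proper nontrivial fusion subring and show that none is closed under multiplication. A fusion subring of $\mathcal R$ is spanned by a subset of the basis $\{\mathbf 1,X,Y,Z\}$ that contains $\mathbf 1$, is stable under duality (automatic here, since $\mathcal R$ is self-dual), and is closed under multiplication. So a proper nontrivial subring would be spanned by one of
\[
\{\mathbf 1,X\},\ \{\mathbf 1,Y\},\ \{\mathbf 1,Z\},\ \{\mathbf 1,X,Y\},\ \{\mathbf 1,X,Z\},\ \{\mathbf 1,Y,Z\}.
\]
For each of them we exhibit a product of two of its elements that has a positive coefficient on a basis element outside the subset. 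The only inputs are the shape of the matrices in \eqref{eq:matrices}, the identities $d=\lambda q^2$, $e=\lambda pq$, $f=\lambda p^2$ with $\lambda,p,q>0$ (so $d,e,f>0$), and Lemma \ref{lmm:2a}.

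From \eqref{eq:matrices}, the relevant products are
\[
X^2=\mathbf 1+aX+bY+cZ,\qquad XY=bX+dY+eZ,\qquad XZ=cX+eY+fZ,
\]
\[
Y^2=\mathbf 1+dX+gY+hZ,\qquad Z^2=\mathbf 1+fX+iY+jZ.
\]
We treat the six subsets in turn.
\begin{itemize}
\item $\{\mathbf 1,X\}$: closure would force $b=c=0$. This contradicts $qc-pb=1$ from Lemma \ref{lmm:2a}, which is where the fusion ring hypothesis is used.
\item $\{\mathbf 1,Y\}$: the coefficient of $X$ in $Y^2$ is $d>0$.
\item $\{\mathbf 1,Z\}$: the coefficient of $X$ in $Z^2$ is $f>0$.
\item $\{\mathbf 1,X,Y\}$: the coefficient of $Z$ in $XY$ is $e>0$.
\item $\{\mathbf 1,X,Z\}$: the coefficient of $Y$ in $XZ$ is $e>0$.
\item $\{\mathbf 1,Y,Z\}$: the coefficient of $X$ in $Y^2$ is $d>0$.
\end{itemize}
Hence no proper nontrivial fusion subring exists, and $\mathcal R$ is simple.

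There is no real obstacle. The only case not settled by the positivity of $d,e,f$ alone is $\{\mathbf 1,X\}$, and Lemma \ref{lmm:2a} handles it. The converse direction, $(3)\Rightarrow(1)$, is trivial.
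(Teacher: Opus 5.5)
Your proposal is correct and is essentially the paper's argument. Both use $(b,c)\neq(0,0)$ from Lemma~\ref{lmm:2a} so that $X^2$ contains $Y$ or $Z$, and the positivity of $d$, $e$, $f$ so that $Y^2$ and $Z^2$ contain $X$ while $XY$ contains $Z$ and $XZ$ contains $Y$; your explicit check of the six candidate subsets just spells out what the paper leaves implicit.
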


\begin{proof}
We have $(b,c)\neq (0, 0)$ according to Lemma \ref{lmm:2a}. 
Thus, $X^2=\mathbf{1}+aX+bY+cZ$ contains either $Y$ or $Z$. 
The fact that the integers $d$ and $f$  are positive shows that $Y^2=\mathbf{1}+dX+gY+hZ$ and $Z^2=\mathbf{1}+fX+iY+jZ$ contain $X$. 
Finally, the positivity of $e$ proves that  $XY=bX+dY+eZ$ contains $Z$ and $XZ=cX+eY+fZ$ contains $Y$. 
It follows that $\mathcal{R}$ is simple.
\end{proof}

Recall that $\mathcal{R}$ is a fusion ring if and only if the matrices $N_X, N_Y$, and $N_Z$ commute pairwise. 
This criterion tells us that we need to perform six matrix multiplications, which is too cumbersome due to the complexity of calculations. The next lemma explains how to reduce this number to four. 

\begin{lmm} \label{lmm:4computations}
Assume that $N_X$ commutes with both $N_Y$ and $N_Z$. Then $N_Y$ and $N_Z$ commute. 
\end{lmm}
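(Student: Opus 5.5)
The plan is to study the commutator $C=N_YN_Z-N_ZN_Y$ and show that it vanishes, using three facts: it is antisymmetric, it commutes with $N_X$, and it kills the unit vector. Since $\mathcal R$ is self-dual, the matrices in \eqref{eq:matrices} are symmetric, so $C^T=N_ZN_Y-N_YN_Z=-C$, and $C$ is antisymmetric. The Jacobi identity $[N_X,[N_Y,N_Z]]=[[N_X,N_Y],N_Z]+[N_Y,[N_X,N_Z]]$ and the hypotheses give $[N_X,C]=0$.

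Next, let $e_1,\dots,e_4$ be the standard basis vectors. By symmetry, $N_Ye_1=e_3$ and $N_Ze_1=e_4$. Then
\[
Ce_1=N_Ye_4-N_Ze_3,
\]
which is the fourth column of $N_Y$ minus the third column of $N_Z$. Both columns equal $(0,e,h,i)^T$, so $Ce_1=0$. Because $C$ commutes with $N_X$, it follows that $CN_X^ke_1=N_X^kCe_1=0$ for all $k\ge 0$. Now $N_Xe_1=e_2$ and $N_X^2e_1=N_Xe_2=(1,a,b,c)^T$. Hence $\ker C$ contains $e_1$, $e_2$ and $(0,0,b,c)^T$.

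The remaining point, and the only one needing care, is that $(b,c)\neq(0,0)$; here Lemma~\ref{lmm:2a} cannot be quoted directly, since it assumes $\mathcal R$ is a fusion ring. However, its proof only uses that $N_Y$ and $N_Z$ commute with $N_X$: they preserve the one-dimensional kernel of $N_X$, and then integrality of the eigenvalues gives $qc-pb=1$. So the same argument applies under the present hypotheses and yields $qc-pb=1$, and in particular $(b,c)\neq(0,0)$.

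It follows that $\ker C$ has dimension at least $3$, so $C$ has rank at most $1$. A real antisymmetric matrix has even rank, so $C=0$. Therefore $N_Y$ and $N_Z$ commute.
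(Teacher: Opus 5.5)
Your proof is correct. Its first half matches the paper's, and the two routes differ only in the last step.

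\textbf{Shared part.} Both arguments use that the commutator is skew-symmetric, and that its first column vanishes because $N_Ye_4=N_Ze_3=(0,e,h,i)^T$. Both then use that it commutes with $N_X$. In your version this is $N_Xe_1=e_2$; in the paper it is the comparison of first rows of $N_XA$ and $AN_X$. Either way, the second row and column are killed, leaving one unknown $\gamma$ in the $(3,4)$ position.

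\textbf{Different final step.} The paper compares the $(4,4)$-entries of $N_XA$ and $AN_X$, which gives $2e\gamma=0$. It then concludes from $e>0$, which is immediate from $e^2=df>0$. You instead use the kernel vector $(0,0,b,c)^T$ coming from $N_X^2e_1$. This gives $\gamma b=\gamma c=0$, which is equivalent to your rank-at-most-one plus even-rank argument. It therefore requires $(b,c)\neq(0,0)$.

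\textbf{Your use of Lemma~\ref{lmm:2a}.} You rightly note that the lemma cannot be cited as stated. Its proof uses only that $N_Y$ and $N_Z$ commute with $N_X$, together with $d>0$, $p>0$ and $\det(N_X)=0$, so it transfers. In fact you only need $qc-pb\neq 0$, not $qc-pb=1$. That follows already from the first coordinate of $N_Y\mathbf v$ being $p\neq0$.

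\textbf{What each route buys.} The paper's route is more self-contained: it needs nothing beyond the positivity of $e$ and avoids the kernel argument of Lemma~\ref{lmm:2a}. Yours is more conceptual: $\ker C$ is an $N_X$-invariant subspace containing $e_1$, so it contains the whole $N_X$-cyclic subspace generated by $e_1$. It would also survive $e=0$, provided $(b,c)\neq(0,0)$ were known by other means.
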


\begin{proof}
For brevity, put $A=N_YN_Z-N_ZN_Y$ and let us show that it is the zero matrix. 
One has \[A(1,0,0,0)^T=N_Y(0,0,0,1)^T-N_Z(0,0,1,0)^T=(0,e,h,i)^T-(0,e,h,i)^T=(0,0,0,0)^T.\]
Consequently, the first column of $A$ is the zero vector. 
By observing that both $N_Y$ and $N_Z$ are symmetric, we infer that $A^T=-A$, that is, $A$ is skew-symmetric. 
The entries located in the first row or on the main diagonal of $A$ are therefore $0$. Hence, there are integers $\alpha,\beta$, and $\gamma$ for which \[A=\begin{pmatrix} 
0 & 0 & 0 & 0 \\ 
0 & 0 & \alpha & \beta \\ 
0 & -\alpha & 0 & \gamma \\ 
0 & -\beta & -\gamma & 0
 \end{pmatrix}.\] 
 
Next, $A$ commutes with $N_X$ since the latter commutes with both $N_Y$ and $N_Z$. The first row of $N_XA$ being $(0, 0, \alpha, \beta)$, while that of $AN_X$ is $(0,0,0,0)$, we deduce that $\alpha=\beta=0$.
Finally, the $(4,4)$-entry of $N_XA$ is $e\gamma$, while that of $AN_X$ is $-e\gamma$; whence $\gamma=0$ since $e>0$. 
\end{proof}

Thanks to Lemma \ref{lmm:(1)<=>(2)}, Theorem \ref{thm:main} would follow if $(1)$ and $(2)$ are equivalent. 
By Lemma \ref{lmm:2a}, if $qc-pb\neq 1$, then neither $(1)$ nor $(2)$ can hold; they are therefore equivalent. 
We now suppose that $qc-pb= 1$.  
Recall that $(2a)$ and $(2c)$ imply $(2b)$ by Remark \ref{rem:divisibility}. 
Thus, $(1)$ and $(2)$ are equivalent if and only if $(1)$ and $(2c)$ are. 
The proof of the latter is the purpose of the next subsection.
 
 Recall that Lagrange identity yields 
 \begin{equation} \label{cor:Legendre}
 (q^2+p^2)(b^2+c^2)-(bq+cp)^2=(qc-pb)^2=1.
 \end{equation}
 
\subsection{The linear matrix systems} \label{subsec:linearmatrix}
Given a pair $\mathbf{v}=(v_1,v_2)$ of complex numbers, we set \[\Theta(\mathbf{v}) = \begin{pmatrix}
v_1g+v_2h & v_1h+v_2i  \\ 
v_1 h +v_2 i & v_1 i+v_2 j
 \end{pmatrix}.\]  

 \begin{lmm} \label{lmm:eqcommute}
 The matrix $N_X$ commutes with both $N_Y$ and $N_Z$ if and only if  \[ 
 \Delta(\mathbf{v}):=\Theta(\mathbf{v})\begin{pmatrix}
b  \\ 
c 
 \end{pmatrix} - (bv_1+cv_2)\begin{pmatrix}
b  \\ 
c 
 \end{pmatrix}-(b^2+c^2-\kappa)(qv_1+pv_2)\begin{pmatrix}
q  \\ 
p 
 \end{pmatrix} +\mathbf{v}^T  \] and  \[\Omega(\mathbf{v}):= \begin{pmatrix}
q^2 & pq  \\ 
pq & p^2
 \end{pmatrix} \Theta(\mathbf{v}) - \Theta(\mathbf{v}) \begin{pmatrix}
q^2 & pq  \\ 
pq & p^2
 \end{pmatrix} + (qv_1+pv_2)\begin{pmatrix}
0 & -1  \\ 
1 & 0
 \end{pmatrix}\]
 are zero matrices for all pairs of complex numbers $\mathbf{v}=(v_1,v_2)$.
 \end{lmm}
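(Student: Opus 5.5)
The plan is to compute the two commutators $[N_X,N_Y]$ and $[N_X,N_Z]$ directly, and to package their entries as functions of a pair $\mathbf{v}=(v_1,v_2)$. Both $\Delta(\mathbf{v})$ and $\Omega(\mathbf{v})$ are linear in $\mathbf{v}$. Hence it suffices to show that the entries of $\Delta(1,0),\Delta(0,1),\Omega(1,0),\Omega(0,1)$ reproduce, up to sign and repetition, exactly the independent entries of the two commutators. Note that $\mathbf{v}=(1,0)$ should correspond to $N_Y$, since then $\Theta$ is the lower-right block $\begin{pmatrix}g&h\\h&i\end{pmatrix}$ restricted appropriately. Likewise $\mathbf{v}=(0,1)$ should correspond to $N_Z$.

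First, I would write each of $N_Y$ and $N_Z$ in a uniform way. For $\mathbf{v}=(v_1,v_2)$, set $M(\mathbf{v})=v_1N_Y+v_2N_Z$. Its entries are:
\begin{itemize}
\item first row and column $(0,0,v_1,v_2)$;
\item $(2,2)$-entry $bv_1+cv_2$;
\item $(2,3),(2,4)$-entries $\lambda q(qv_1+pv_2)$ and $\lambda p(qv_1+pv_2)$, using $d=\lambda q^2$, $e=\lambda pq$, $f=\lambda p^2$;
\item lower-right $2\times2$ block $\Theta(\mathbf{v})$.
\end{itemize}
Similarly I would write $N_X$ in block form. Its first row is $(0,1,0,0)$, its $(2,2)$-entry is $a$, the remaining part of its second row is $(b,c)$, and its lower-right block is $\lambda\begin{pmatrix}q^2&pq\\pq&p^2\end{pmatrix}=\lambda P$. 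Then I would compute $[N_X,M(\mathbf{v})]$ blockwise with respect to the splitting $1+1+2$. This commutator is skew-symmetric, since both matrices are symmetric. So only the strictly upper entries matter: the $(1,2)$ entry, the $(1,\{3,4\})$ block, the $(2,\{3,4\})$ block, and the off-diagonal entry of the $2\times 2$ block.

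Second, I would identify the pieces. The $(1,2)$ and $(1,\{3,4\})$ entries should vanish automatically or reduce to trivial identities; for example, the $(1,3)$ entry is $b v_1+\dots$ minus the same expression. The off-diagonal part of the lower block is $\lambda(P\Theta-\Theta P)$ plus the cross terms $(b,c)^T(\lambda q(qv_1+pv_2),\lambda p(qv_1+pv_2))$ minus their transpose. These cross terms are $\lambda(qv_1+pv_2)$ times $\begin{pmatrix}0&bp-cq\\cq-bp&0\end{pmatrix}$. Using Lemma \ref{lmm:2a}, i.e. $qc-pb=1$ (assumed in this subsection), this becomes $\lambda$ times the antisymmetric term in $\Omega(\mathbf{v})$. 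Dividing by $\lambda>0$ gives exactly $\Omega(\mathbf{v})=0$.

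The $(2,\{3,4\})$ row of the commutator gives $\Delta(\mathbf{v})$. Its entries are $(b,c)\Theta(\mathbf{v})$, plus $a\lambda(qv_1+pv_2)(q,p)$, plus $\mathbf{v}$ (coming from the unit entries), minus $(bv_1+cv_2)(b,c)$, minus $\lambda^2(qv_1+pv_2)(q,p)P$. Now use $(q,p)P=(p^2+q^2)(q,p)$ together with $d+f=\lambda(p^2+q^2)$. Then the coefficient of $(qv_1+pv_2)(q,p)$ becomes $\lambda a-\lambda(d+f)=\kappa-b^2-c^2$, by the definition of $\kappa$. This matches $\Delta(\mathbf{v})$ after transposition.

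The main obstacle is the bookkeeping in this second step: getting signs and transposes right, and checking that no entry of the commutator escapes the two packaged conditions. I would double-check this by verifying that the $(3,3)$, $(4,4)$ and $(2,2)$ entries vanish by skew-symmetry. Since every entry of the two commutators is accounted for, the equivalence follows in both directions.
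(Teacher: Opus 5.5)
Your proposal is correct and is essentially the paper's proof. Both use the pencil $M(\mathbf{v})=v_1N_Y+v_2N_Z$, the skew-symmetry of $N_XM(\mathbf{v})-M(\mathbf{v})N_X$ and its vanishing first row, identify the lower $2\times 2$ block with $\lambda\Omega(\mathbf{v})$ via the standing assumption $qc-pb=1$, and read $\Delta(\mathbf{v})$ off the $(2,\{3,4\})$ block via $e^2=df$ and $\lambda(a-d-f)=\kappa-b^2-c^2$. The only slip is cosmetic: the $(1,3)$ entry is $(dv_1+ev_2)-(dv_1+ev_2)$, whereas it is the $(1,2)$ entry that reads $(bv_1+cv_2)-(bv_1+cv_2)$.
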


\begin{proof}
Clearly, $N_X$ commutes with both $N_Y$ and $N_Z$ if and only if $A=N_XM(\mathbf{v})-M(\mathbf{v})N_X$ is the zero matrix for all pairs of complex numbers $\mathbf{v}=(v_1, v_2)$, where \[M(\mathbf{v}):=v_1N_Y+v_2N_Z = \begin{pmatrix} 
0 & 0 & v_1 & v_2 \\ 
0 & v_1b+v_2c & v_1d+v_2e & v_1e+v_2f \\ 
v_1 &v_1d+v_2e & v_1g+v_2h & v_1h+v_2i\\ 
v_2 & v_1e+v_2f & v_1h +v_2i & v_1i+v_2j
 \end{pmatrix}. \] 
 Note that the lemma would immediately follow once the equality \[A= \begin{pmatrix} 
0 & 0 & 0 \\ 
0 & 0 & \Delta(\mathbf{v})^T \\ 
0 & - \Delta(\mathbf{v}) &\lambda \Omega(\mathbf{v})
 \end{pmatrix} \] is proved.
 Because $N_X$ and $M(\mathbf{v})$ are symmetric matrices, it follows that $A$ is skew-symmetric, and so all entries on the main diagonal are $0$.   
 Next, 
  \[ A (1,0,0,0)^T= N_X (0,0,v_1,v_2)^T - M(\mathbf{v})(0,1,0,0)^T=(0,0,0,0)^T,\] which proves that the first column of $A$, and therefore its first row, contains only $0$. 

Recall that $qc-pb=1$. 
We now show that the block of $A$ given by the entries $(3,3), (3,4), (4,3)$, and $(4,4)$ is equal to $\lambda \Omega(\mathbf{v})$.
Since these two matrices are skew-symmetric, it suffices to prove that the $(4,3)$-entry of $A$ matches the $(2,1)$-entry of $\lambda \Omega(\mathbf{v})$, namely \[\lambda  \left(pq(v_1g+v_2h)+p^2(v_1h+v_2i)-q^2(v_1h+v_2i)-pq(v_1i+v_2j)+(qv_1+pv_2)(qc-pb)\right).\]
We infer the desired equality since a direct computation shows that the $(4,3)$-entry of $A$ equals \[e(v_1g+v_2h)+f(v_1h+v_2i)-d(v_1h+v_2i)-e(v_1i+v_2j)+c(v_1d+v_2e)-b(v_1e+v_2f).\]
 
We now show that the block of $A$ given by the entries $(3,2)$ and $(4,2)$ is equal to $-\Delta(\mathbf{v})$. 
An easy calculation establishes that this block is equal to \[ -\mathbf{v}^T+(v_1b+v_2c)\begin{pmatrix} 
b \\ 
c 
\end{pmatrix} - \Theta(\mathbf{v}) \begin{pmatrix} 
b \\ 
c 
\end{pmatrix} + \begin{pmatrix} 
(d^2+e^2-ad)v_1+e(d+f-a)v_2 \\ 

\\

e(d+f-a)v_1+(e^2+f^2-af)v_2
\end{pmatrix}.  \] 
Using the relations $e^2=df$ and $(d+f-a)\lambda =b^2+c^2-\kappa$ easily yields the desired formula. 
\end{proof}

  This lemma shows that it is relevant to study the system of linear matrix equations \[ (S_\mathbf{v}) \; \; \begin{cases} 
M\begin{pmatrix}
b  \\ 
c 
 \end{pmatrix} = (bv_1+cv_2)\begin{pmatrix}
b  \\ 
c 
 \end{pmatrix}+(b^2+c^2-\kappa)(qv_1+pv_2)\begin{pmatrix}
q  \\ 
p 
 \end{pmatrix} -\mathbf{v}^T \\ 
 
 \\

 \begin{pmatrix}
q^2 & pq  \\ 
pq & p^2
 \end{pmatrix} M- M\begin{pmatrix}
q^2 & pq  \\ 
pq & p^2
 \end{pmatrix} = (qv_1+pv_2) \begin{pmatrix}
0 & 1  \\ 
-1 & 0
 \end{pmatrix} 
 \end{cases}, \] with $\mathbf{v}=(v_1,v_2)$ a pair of complex numbers. 
 Clearly, the set of solutions is either empty or an affine space. 
  For the convenience of the reader, we recall that $qc-pb=1$ and that \[ \tau =\frac{c^2+b^2+(p^2+q^2)\kappa}{bq+pc}-(bq+pc), \qquad R =\begin{pmatrix}
   2qb-q^2\tau&qc+bp-pq\tau \\
   pb+cq-pq\tau&2cp-p^2\tau
   \end{pmatrix},\] and  \[ K=\begin{pmatrix}
   b^2-1+q^2(c^2+b^2-\kappa)&bc+pq(c^2+b^2-\kappa)\\
   bc+pq(c^2+b^2-\kappa)&c^2-1+p^2(c^2+b^2-\kappa)
   \end{pmatrix}.\]

   \begin{lmm} \label{lmm:solvesystem}
  Let $\mathbf{v}=(v_1,v_2)$ be a pair of complex numbers.
  Then the system $(S_\mathbf{v})$ has a unique solution given by $M=v_1(cR-pK)+v_2(-bR+qK)$.
  \end{lmm}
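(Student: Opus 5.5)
The plan is to treat $(S_\mathbf{v})$ as a linear system in an unknown $2\times 2$ matrix $M$. I will first prove that it has at most one solution, and then check that the proposed matrix solves it. By linearity in $\mathbf{v}$, it suffices to handle $\mathbf{v}=(1,0)$ and $\mathbf{v}=(0,1)$: the right-hand sides of both equations are linear in $(v_1,v_2)$, and so is the proposed $M$. Throughout, write $\mathbf{w}=(q,p)^T$ and $P=\mathbf{w}\mathbf{w}^T=\begin{pmatrix} q^2&pq\\ pq&p^2\end{pmatrix}$.

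\emph{Uniqueness.} The difference $D$ of two solutions satisfies $PD-DP=0$ and $D(b,c)^T=0$. Since $p,q>0$, the matrix $P$ has the two distinct eigenvalues $0$ and $p^2+q^2$, so it is non-scalar. Hence its commutant in the space of $2\times2$ matrices is two-dimensional, spanned by $I$ and $P$, and we can write $D=\alpha I+\beta P$. Then
\[
D(b,c)^T=\alpha(b,c)^T+\beta(qb+pc)\,\mathbf{w}=0 .
\]
The vectors $(b,c)^T$ and $\mathbf{w}$ are linearly independent, because $\det\begin{pmatrix} b&q\\ c&p\end{pmatrix}=pb-qc=-1$ by Lemma~\ref{lmm:2a}. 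This forces $\alpha=0$ and $\beta(qb+pc)=0$. Since $qb+pc>0$ (as noted in Remark~\ref{rem:divisibility}), we get $\beta=0$, so $D=0$.

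\emph{Existence by verification.} I will substitute $M_1=cR-pK$ (for $\mathbf{v}=(1,0)$) and $M_2=-bR+qK$ (for $\mathbf{v}=(0,1)$) into both equations. The commutator equation is the easier one. The multiples of $I$ inside $R$ and $K$ drop out, and so do the multiples of $P$ (the $\tau$-term of $R$ is $-\tau P$, and the $(c^2+b^2-\kappa)$-term of $K$ is $(c^2+b^2-\kappa)P$). What remains in $R$ is
\[
\mathbf{w}(b,c)+(b,c)^T\mathbf{w}^T ,
\]
and what remains in $K$ is $(b,c)^T(b,c)$. For a rank-one symmetric matrix $P=\mathbf{w}\mathbf{w}^T$ and a vector $\mathbf{u}$, one has
\[
[P,\mathbf{u}\mathbf{w}^T+\mathbf{w}\mathbf{u}^T]=\lVert\mathbf{w}\rVert^2\bigl(\mathbf{w}\mathbf{u}^T-\mathbf{u}\mathbf{w}^T\bigr)-(\mathbf{w}^T\mathbf{u})\bigl(\mathbf{w}\mathbf{u}^T-\mathbf{u}\mathbf{w}^T\bigr),
\]
together with an analogous identity for $[P,\mathbf{u}\mathbf{u}^T]$. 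Each such commutator is a multiple of $\begin{pmatrix}0&1\\-1&0\end{pmatrix}$, with coefficient an explicit $2\times2$ determinant involving $\mathbf{w}$ and $(b,c)^T$. I will then use $qc-pb=1$ to show that these coefficients combine to $q$ for $M_1$ and to $p$ for $M_2$, as required.

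For the first equation, I compute $R(b,c)^T$ and $K(b,c)^T$ directly. Each is a combination of $(b,c)^T$ and $\mathbf{w}$ whose coefficients involve $b^2+c^2$, $bq+pc$, $\tau$ and $\kappa$. Next I substitute the definition of $\tau$, that is,
\[
(bq+pc)\bigl(\tau+bq+pc\bigr)=c^2+b^2+(p^2+q^2)\kappa ,
\]
and the Lagrange identity \eqref{cor:Legendre}. The aim is to match the right-hand side $(b,c)\,b+(b^2+c^2-\kappa)\,q\,\mathbf{w}-(1,0)^T$ for $M_1$, and the corresponding one for $M_2$. Because $(b,c)^T$ and $\mathbf{w}$ form a basis, it is enough to compare coefficients in this basis, rather than entries.

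I expect this last matching to be the main obstacle. It is a purely computational step, and the definition of $\tau$ has to enter in exactly the right place. If the bookkeeping gets out of hand, I would fall back on deriving the solution constructively instead. I would take the particular solution $M_0=(\mathbf{w}\mathbf{w}_\perp^T+\mathbf{w}_\perp\mathbf{w}^T)/\lVert\mathbf{w}\rVert^4$ of $[P,M_0]=J$, where $\mathbf{w}_\perp=(-p,q)^T$ and $J=\begin{pmatrix}0&1\\-1&0\end{pmatrix}$. Then I would write $M=(qv_1+pv_2)M_0+\alpha I+\beta P$ and solve the first equation for $\alpha$ and $\beta$ by Cramer's rule, using the determinant $-1$. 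Finally I would recognize the result as the stated formula.
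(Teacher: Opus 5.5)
Your strategy is sound, and so are your decompositions $R=-\tau P+\mathbf{w}\mathbf{u}^T+\mathbf{u}\mathbf{w}^T$ and $K=\mathbf{u}\mathbf{u}^T-I+(b^2+c^2-\kappa)P$, where $\mathbf{u}=(b,c)^T$. The strategy is to prove uniqueness and then check $cR-pK$ and $-bR+qK$ using linearity in $\mathbf{v}$. However, the commutator identity you display is false, and the check of the second equation fails if you use it. Since $P\mathbf{u}\mathbf{w}^T=(\mathbf{w}^T\mathbf{u})P=\mathbf{w}\mathbf{u}^TP$, these two contributions cancel, and the correct formula is
\[
[P,\mathbf{u}\mathbf{w}^T+\mathbf{w}\mathbf{u}^T]=\lVert\mathbf{w}\rVert^2\bigl(\mathbf{w}\mathbf{u}^T-\mathbf{u}\mathbf{w}^T\bigr).
\]
It has no term $-(\mathbf{w}^T\mathbf{u})(\mathbf{w}\mathbf{u}^T-\mathbf{u}\mathbf{w}^T)$. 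For example, with $\mathbf{w}=(1,0)^T$ and $\mathbf{u}=(1,1)^T$, the left side is $J=\begin{pmatrix}0&1\\-1&0\end{pmatrix}$, while your right side is $0$. Your version would give $[P,R]=(p^2+q^2-qb-pc)J$, and then $[P,cR-pK]=\bigl(q-c(qb+pc)\bigr)J\neq qJ$. With the correct identity, you have $\mathbf{w}\mathbf{u}^T-\mathbf{u}\mathbf{w}^T=(qc-pb)J=J$ and $[P,\mathbf{u}\mathbf{u}^T]=(\mathbf{w}^T\mathbf{u})(\mathbf{w}\mathbf{u}^T-\mathbf{u}\mathbf{w}^T)=(qb+pc)J$. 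These give $[P,R]=(p^2+q^2)J$ and $[P,K]=(qb+pc)J$, hence $[P,cR-pK]=q(qc-pb)J=qJ$ and $[P,-bR+qK]=p(qc-pb)J=pJ$, as required.

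Once this is repaired, your proof is essentially the paper's. For uniqueness, the paper multiplies $PM=MP$ by $\mathbf{u}$ to get $(qb+pc)M\mathbf{w}=0$, so $M$ kills the basis $\{\mathbf{u},\mathbf{w}\}$. Your commutant argument is an equally short alternative. For existence, the paper tests $\mathbf{v}=(b,c)$ and $\mathbf{v}=(q,p)$ instead of the standard basis. Since $b(cR-pK)+c(-bR+qK)=(qc-pb)K=K$ and $q(cR-pK)+p(-bR+qK)=R$, the targets are just $K$ and $R$. The first equation for $K$ then reads $K\mathbf{u}=(b^2+c^2-1)\mathbf{u}+(b^2+c^2-\kappa)(qb+pc)\mathbf{w}$, which is immediate from your decomposition. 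The first equation for $R$ reads $R\mathbf{u}=(qb+pc)\mathbf{u}+\bigl((qb+pc)^2-(p^2+q^2)\kappa\bigr)\mathbf{w}$; this follows from the definition of $\tau$ and matches the right-hand side by \eqref{cor:Legendre}. Your standard-basis coefficient matching is the same computation once you write $(1,0)^T=c\mathbf{w}-p\mathbf{u}$ and $(0,1)^T=q\mathbf{u}-b\mathbf{w}$. So the obstacle you anticipate takes only a few lines, and the fallback via $M_0$ is unnecessary. Finally, $qc-pb=1$ is the standing hypothesis of this subsection, not a consequence of Lemma~\ref{lmm:2a}. That lemma assumes $\mathcal{R}$ is a fusion ring, which is exactly what the present lemma is used to establish.
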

   
   \begin{proof}
  For uniqueness, it suffices to claim that the zero matrix is the unique matrix satisfying \[ 
M\begin{pmatrix}
b  \\ 
c 
 \end{pmatrix} = \begin{pmatrix}
0  \\ 
0 
 \end{pmatrix}  \; \; \text{and} \; \;  \begin{pmatrix}
q^2 & pq  \\ 
pq & p^2
 \end{pmatrix} M=M\begin{pmatrix}
q^2 & pq  \\ 
pq & p^2
 \end{pmatrix}. \]
Multiplying the second equation by $(b,c)^T$ shows that $(qb+pc)M (q, p)^T=(0,0)^T$. 
The uniqueness follows since $qb+pc>0$ and the matrix of $M$ in the basis $\{ (b,c)^T, (q, p)^T\}$ is the zero matrix. 

For existence, the system $(S_\mathbf{v})$ and the proposed solution are linear in $\mathbf{v}$.
It then suffices to show the lemma for $\mathbf{v}=(b,c)$ and $\mathbf{v}=(q,p)$, for which the
expected solutions are $K$ and $R$, respectively.

Let us start with the case $\mathbf{v}=(b,c)$. 
 An easy computation ensures that $K$ satisfies the first equation. 
 For the second one, we use the symmetry of $K$ to infer that the matrix \[ \begin{pmatrix}
q^2 & pq  \\ 
pq & p^2
 \end{pmatrix} K- K\begin{pmatrix}
q^2 & pq  \\ 
pq & p^2
 \end{pmatrix} \] is skew-symmetric. 
 It is then enough to prove that its $(1,2)$-entry is $qb+pc$, which follows directly.
 
We now deal with the case $\mathbf{v}=(q, p)$. 
  A direct calculation provides \[R\begin{pmatrix}
b  \\ 
c 
 \end{pmatrix}= \begin{pmatrix}
b(qb+cp)+q\left( (bq+pc)^2-(p^2+q^2)\kappa \right) \\ 

\\

c(bq+cp)+p\left((bq+pc)^2-(p^2+q^2)\kappa \right) 
 \end{pmatrix},\]
 and \eqref{cor:Legendre} claims that $R$ satisfies the first equation. 
 Again, thanks to the symmetry of $R$, the latter would satisfy the second equation if the $(1,2)$-entry of the matrix \[\begin{pmatrix}
q^2 & pq  \\ 
pq & p^2
 \end{pmatrix} R- R\begin{pmatrix}
q^2 & pq  \\ 
pq & p^2
 \end{pmatrix}\] is $q^2+p^2$, which can be checked directly. 
   \end{proof}
   
  We now deduce Theorem \ref{thm:main}. 
 Recall that $\mathcal{R}$ is a fusion ring if and only if $N_X$ commutes with both $N_Y$ and $N_Z$. 
Since the system $(S_\mathbf{v})$ is linear in $\mathbf{v}$, Lemma \ref{lmm:eqcommute} tells us that $\mathcal{R}$ is a fusion ring if and only if $\Theta(1,0)$ and $\Theta(0,1)$ are solutions of $(S_{(1,0)})$ and $(S_{(0,1)})$, respectively. Finally, the definition of $\Theta(\mathbf{v})$, together with Lemma \ref{lmm:solvesystem}, ensures that $\mathcal{R}$ is a fusion ring if and only if 
\begin{flalign*}
 && \begin{pmatrix}
g & h  \\ 
h & i
 \end{pmatrix}= cR-pK \; \; \text{and} \; \; \begin{pmatrix} 
h & i  \\ 
i & j
 \end{pmatrix}= -bR+qK. && \square
 \end{flalign*}
   
\subsection{The integral specialization} \label{ss-section:integral case} 
Theorem \ref{thm:main} classifies all self-dual vanishing fusion rings of rank four with $df>0$, but it does not tell us anything about the integral case. The current subsection will show that this case is characterized by the simple condition $\kappa=\lambda ^2$. 

We suppose in this subsection that $\mathcal{R}$ is a fusion ring and recall that $e^2=df>0$. 
A direct computation shows that $\chi(T)$, the characteristic polynomial of $N_X$, is \[ \chi(T)= T\left(T^3-(a+d+f)T^2+(ad+af-b^2-c^2-1)T+dc^2+fb^2-2bce+d+f\right). \] 
By observing that $fb^2-2ebc+dc^2=\lambda (pb-qc)^2=\lambda $, we simply have 
\begin{equation} \label{eq:polychar}
 \chi(T)= T\left(T^3-(a+d+f)T^2+(ad+af-b^2-c^2-1)T+d+f+\lambda \right). 
 \end{equation}

\begin{lmm} \label{lmm:kappaequFPdim}
We have $\kappa=\lambda ^2$ if and only if $\FPdim(X)=d+f+\lambda $. In addition, if one of these equivalent conditions is satisfied, then \[\chi(T)=T(T-d-f-\lambda )(T^2-(a-\lambda )T-1). \] 
\end{lmm}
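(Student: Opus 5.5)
We will work directly with the cubic factor of \eqref{eq:polychar},
\[
\psi(T)=T^3-(a+d+f)T^2+(ad+af-b^2-c^2-1)T+d+f+\lambda,
\]
so that $\chi(T)=T\psi(T)$. The first step is to rewrite the linear coefficient in terms of $\kappa$. By definition, $\kappa=b^2+c^2+(a-f-d)\lambda$, so $b^2+c^2=\kappa-(a-d-f)\lambda$. Set $s=d+f$; then
\[
ad+af-b^2-c^2-1=as-\kappa+(a-s)\lambda-1 .
\]
Next we evaluate $\psi$ at $t:=s+\lambda$. Expanding $t^3-(a+s)t^2+(as+(a-s)\lambda-\kappa-1)t+t$ should collapse: the constant term $s+\lambda=t$ cancels the $-t$ coming from the $-1$ in the linear coefficient. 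One expects
\[
\psi(s+\lambda)=(s+\lambda)(\lambda^2-\kappa).
\]
Indeed, dividing by $t$ leaves $t^2-(a+s)t+as+(a-s)\lambda-\kappa=(t-a)(t-s)+(a-s)\lambda-\kappa=(t-a)\lambda+(a-s)\lambda-\kappa=\lambda^2-\kappa$. Since $s+\lambda>0$, it follows that $\kappa=\lambda^2$ if and only if $d+f+\lambda$ is a root of $\chi$.

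For the direction $\FPdim(X)=d+f+\lambda\Rightarrow\kappa=\lambda^2$, note that $\FPdim(X)$ is an eigenvalue of $N_X$ (Perron--Frobenius), hence a root of $\chi$. It is nonzero, so it is a root of $\psi$, and the identity above forces $\kappa=\lambda^2$.

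For the converse, assume $\kappa=\lambda^2$. Then $T-(s+\lambda)$ divides $\psi$, and polynomial division (matching the $T^2$ and constant coefficients) gives
\[
\psi(T)=(T-s-\lambda)(T^2-(a-\lambda)T-1).
\]
Here the $T^2$ coefficient is $-(s+\lambda)-(a-\lambda)=-(a+s)$ and the constant term is $s+\lambda$, both as required. It remains to check the $T$ coefficient: $-1+(s+\lambda)(a-\lambda)$ should equal $as+(a-s)\lambda-\lambda^2-1$, and it does. This yields the claimed factorization of $\chi$.

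The main obstacle is showing that $s+\lambda$ is the spectral radius rather than merely some root. The other nonzero roots are the roots of $T^2-(a-\lambda)T-1$, namely $\mu_\pm=\frac{(a-\lambda)\pm\sqrt{(a-\lambda)^2+4}}{2}$, with $\mu_+\mu_-=-1$. We must show $|\mu_\pm|<s+\lambda$, that is, $\max|\mu_\pm|=\frac{|a-\lambda|+\sqrt{(a-\lambda)^2+4}}{2}<s+\lambda$. Equivalently, $u^2-|a-\lambda|u-1>0$ at $u=s+\lambda$, i.e. $(s+\lambda)(s+\lambda-|a-\lambda|)>1$.

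To get a bound on $a$, use $\kappa=\lambda^2$ to write $a\lambda=\lambda^2+s\lambda-b^2-c^2$, so that $a=\lambda+s-(b^2+c^2)/\lambda$. This gives $a-\lambda\le s$, and $a\ge0$ gives $a-\lambda\ge-\lambda$. If $a\ge\lambda$, then $s+\lambda-(a-\lambda)=\lambda+(b^2+c^2)/\lambda$. Since $(b,c)\neq(0,0)$ by Lemma \ref{lmm:2a}, this quantity is positive, and its product with $s+\lambda\ge 3$ (as $s\ge 2$) exceeds $1$. If $a<\lambda$, then $s+\lambda-(\lambda-a)=s+a\ge 2$, so the product is at least $2\cdot 3>1$. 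Hence $s+\lambda$ strictly dominates all other eigenvalues in absolute value, and $\FPdim(X)=d+f+\lambda$.
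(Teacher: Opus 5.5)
Your proof is correct and follows the paper's argument. You evaluate the cubic factor at $d+f+\lambda$ to get $(d+f+\lambda)(\lambda^2-\kappa)$, obtain the factorization (the paper via Vieta, you by matching coefficients), and then check that $d+f+\lambda$ dominates the roots of $T^2-(a-\lambda)T-1$. The only difference is in that last step: the paper uses Perron--Frobenius (the spectral radius is the largest real eigenvalue), so it only needs $(d+f+\lambda)\bigl(d+f+\lambda-(a-\lambda)\bigr)\geq 1$ and avoids your case split on the sign of $a-\lambda$, whereas your absolute-value bound gives strict dominance directly from the definition of spectral radius. Incidentally, your appeal to $(b,c)\neq(0,0)$ is unnecessary, since $\lambda+(b^2+c^2)/\lambda\geq\lambda\geq 1$.
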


\textit{Proof.}
First, it follows from \eqref{eq:polychar} that $d+f+\lambda $ is a root of $\chi(T)$ if and only if $d+f+\lambda$ is a root of the polynomial $T^2-(a+d+f)T+ad+af-b^2-c^2$ if and only if $\kappa=\lambda^2$.
In particular, this shows the converse. We now suppose that $\kappa=\lambda^2$. 

If $d+f+\lambda, \beta$, and $\gamma$ are the roots of $\chi(T)/T$, then $(d+f+\lambda)+\beta+\gamma= a+d+f$ and $(d+f+\lambda)\beta\gamma=-(d+f+\lambda )$. Equivalently, $\beta+\gamma=a-\lambda $ and $\beta\gamma=-1$, which proves that \[\chi(T)=T(T-d-f-\lambda )(T^2-(a-\lambda )T-1). \] 

The roots of $T^2-(a-\lambda )T-1$, namely $\beta$ and $\gamma$, are real since the discriminant is positive and have opposite signs.
By Frobenius--Perron theory, $\FPdim(X)$ is the largest positive real root of $\chi$. 
Consequently, $d+f+\lambda\geq \max\{\beta, \gamma\}$ if and only if $T^2-(a-\lambda )T-1$ is a nonnegative number when evaluating at $T=d+f+\lambda$.
This can be done by noticing that 
\begin{flalign*}
&& (d+f+\lambda) \left(d+f+\lambda -(a-\lambda)\right) \geq 3 \left( \frac{b^2+c^2-\kappa}{\lambda }+2\lambda\right) =3 \left(\frac{b^2+c^2}{\lambda}+\lambda\right) \geq 3. && \square
\end{flalign*} 

\begin{prop} \label{prop:firstcharinteger}
The fusion ring $\mathcal{R}$ is integral if and only if $\kappa=\lambda ^2$. 
Moreover, if one of these equivalent properties holds, then \[\left(\FPdim(X), \FPdim(Y), \FPdim(Z)\right)=\left(\lambda(1+p^2+q^2), \; b+q(bq+pc), \; c+p(bq+pc)\right).\]
\end{prop}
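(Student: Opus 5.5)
The proof splits into the two implications. Both rest on a single computation: the Frobenius--Perron eigenvector equations for $N_X$. Put $x=\FPdim(X)$, $y=\FPdim(Y)$, $z=\FPdim(Z)$. Since $\mathcal R$ is a commutative fusion ring, $\FPdim$ is a ring character. Hence $(1,x,y,z)^T$ is an eigenvector of $N_X$ for the eigenvalue $x$, and the last two rows of this eigen-equation read
\[
 (x-d)y-ez=bx,\qquad -ey+(x-f)z=cx .
\]
The determinant of this $2\times 2$ system is $(x-d)(x-f)-e^2=x(x-d-f)$, using $e^2=df$.

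For the implication $\kappa=\lambda^2\Rightarrow$ integral, I will use Lemma \ref{lmm:kappaequFPdim}, which gives $x=d+f+\lambda=\lambda(1+p^2+q^2)$, an integer. Then $x-d-f=\lambda\neq 0$ and $x>0$, so the system above has a unique solution, and Cramer's rule gives
\[
 y=\frac{b(x-f)+ce}{\lambda},\qquad z=\frac{c(x-d)+be}{\lambda}.
\]
Substituting $x-f=\lambda(1+q^2)$, $x-d=\lambda(1+p^2)$ and $e=\lambda pq$ yields $y=b(1+q^2)+pqc=b+q(bq+pc)$ and $z=c+p(bq+pc)$. Both are integers, so $\mathcal R$ is integral, and these values are exactly the announced dimensions.

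For the converse, assume $\mathcal R$ is integral. I will show $x=d+f+\lambda$ and then conclude with Lemma \ref{lmm:kappaequFPdim}.
\begin{enumerate}
\item Since $x$ is a nonzero integer root of $\chi(T)$ in \eqref{eq:polychar}, it is a root of the monic integer cubic $\chi(T)/T$, whose constant term is $d+f+\lambda$. Hence $x$ divides $d+f+\lambda$.
\item The matrix $N_X$ is nonnegative and irreducible: its connectivity is guaranteed by the entries $1$, $d$, $e$, $f>0$. It dominates entrywise the matrix that keeps only the lower-right block $\begin{pmatrix} d&e\\ e&f\end{pmatrix}$ and is zero elsewhere. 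That block has rank one, so its Perron root is $d+f$.
\item The strict Perron--Frobenius monotonicity for irreducible matrices then gives $x>d+f$.
\item Since $d\ge\lambda$, we have $d+f\ge (d+f+\lambda)/2$. Therefore $x$ is a divisor of $d+f+\lambda$ strictly larger than half of it, which forces $x=d+f+\lambda$.
\end{enumerate}
Lemma \ref{lmm:kappaequFPdim} then yields $\kappa=\lambda^2$, and the computation of the first direction gives the formula for the dimensions.

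The main obstacle is this converse direction. A first idea, using orthogonality between the $\FPdim$ character and the character $(1,0,p,-q)$ coming from the kernel vector of Lemma \ref{lmm:2a}, turns out to hold identically once $qc-pb=1$ is used, so it gives no information. The argument must instead combine the divisibility from the constant term of $\chi$ with the strict Perron--Frobenius bound $x>d+f$. The point to check carefully is the strictness of that inequality, which relies on the irreducibility of $N_X$.
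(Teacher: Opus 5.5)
Your first direction, and the divisibility half of the converse ($x$ divides the constant term $d+f+\lambda$, then $x>\tfrac{d+f+\lambda}{2}$ forces equality), match the paper's proof. You differ in how you obtain the key inequality $x>d+f$.

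The paper stays inside the eigen-equations. It multiplies the two equations by $q$ and $p$, adds them, and uses $qe=pd$ and $pe=qf$ to get
\[
(x-d-f)(qy+pz)=x(bq+pc)>0 .
\]
So $x>d+f$ follows from the positivity of $y,z$ and of $bq+pc$, with no spectral theory at all.

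You instead compare $N_X$ entrywise with its rank-one lower-right block, whose Perron root is $d+f$ because $e^2=df$. You then apply strict monotonicity of the spectral radius for irreducible nonnegative matrices. This is correct and conceptually clean; it is the same comparison theorem the paper later uses in Proposition \ref{prop:determinant-general}. The cost is that it requires irreducibility of $N_X$, which the paper's identity never needs.

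That irreducibility check is where your write-up is wrong as stated. The entries $1$, $d$, $e$, $f$ do not make the graph of $N_X$ connected. The entries $d$ and $f$ are diagonal, so they are self-loops. The entry $1$ only joins $\mathbf{1}$ and $X$, and $e$ only joins $Y$ and $Z$. If $b=c=0$, then $N_X$ would be block diagonal on $\{\mathbf{1},X\}$ and $\{Y,Z\}$, and the inequality $x\geq d+f$ could be an equality.

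What you need is $b>0$ or $c>0$. This follows from $qc-pb=1$ (Lemma \ref{lmm:2a}). The paper relies on exactly the same fact in the form $bq+pc>0$. With this one-line repair, your proof is complete.
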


\begin{proof}
To ease notation, put $x=\FPdim(X), y=\FPdim(Y)$, and $z=\FPdim(Z)$. 
The dimension equations of relations $XY=bX+dY+eZ$ and $XZ=cX+eY+fZ$ yield  \[ (S) \; \; \begin{cases} (x-d)y-ez=bx \\ 
 -ey + (x-f)z=cx 
 \end{cases}. \]

If $\kappa=\lambda ^2$, then $x=d+f+\lambda=\lambda(1+p^2+q^2)$ by Lemma \ref{lmm:kappaequFPdim} and we obtain a linear system of two equations in two unknowns. 
The determinant of its associated matrix is \[(f+\lambda )(d+\lambda )-e^2= \lambda ^2+(f+d)\lambda =x\lambda .\] We therefore have \[ \begin{pmatrix} 
y \\ 
z 
\end{pmatrix}= \frac{1}{\lambda } \begin{pmatrix} d+\lambda  & e \\ 
e & f+\lambda 
\end{pmatrix}  \begin{pmatrix} 
b \\ 
c 
\end{pmatrix} = \begin{pmatrix} q^2+1& pq \\ 
pq & p^2+1
\end{pmatrix}  \begin{pmatrix} 
b \\ 
c 
\end{pmatrix}\] and $\mathcal{R}$ is integral. Furthermore, the values of $y$ and $z$ correspond to those claimed in the statement. 

We now assume that $\mathcal{R}$ is integral so that $x$ is an integer. Multiplying the first equation of $(S)$ by $q$ and the second by $p$ yield $qxy-qdy-qez-epy+pxz-pfz=x(bq+pc)$. The equalities $qe=pd$ and $ep=fq$ lead to $(x-d-f)(qy+pz)=x(bq+pc)>0$; whence $x>d+f\geq (d+f+\lambda )/2$. 
On the other hand, $x$ must divide the constant term of $\chi(T)/T$, namely $d+f+\lambda $. This forces $x=d+f+\lambda $ and we derive $\kappa=\lambda ^2$ thanks to Lemma \ref{lmm:kappaequFPdim}. 
\end{proof}

We now finish this section by proving a series of corollaries. 

\begin{cor} \label{cor:basicdividesglobal}
If $\mathcal{R}$ is integral, then $\FPdim(X)$ divides $\FPdim(\mathcal{R})$ if and only if $\lambda=1$. 
\end{cor}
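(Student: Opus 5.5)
The plan is to use the explicit dimensions of Proposition~\ref{prop:firstcharinteger} to put $\FPdim(\mathcal R)$ in closed form. Then divisibility by $\FPdim(X)$ reduces to a congruence modulo $\lambda$, which the relation $\kappa=\lambda^2$ settles.

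Since $\mathcal R$ is integral, Proposition~\ref{prop:firstcharinteger} gives $\kappa=\lambda^2$ and
\[
x=\lambda(N+1),\qquad y=b+qs,\qquad z=c+ps,
\]
where I write
\[
x=\FPdim(X),\quad y=\FPdim(Y),\quad z=\FPdim(Z),\quad N=p^2+q^2,\quad M=b^2+c^2,\quad s=bq+pc .
\]
Then $\FPdim(\mathcal R)=1+x^2+y^2+z^2$, and $x$ divides this exactly when it divides $1+y^2+z^2$.

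First I compute $y^2+z^2$. Expanding gives
\[
y^2+z^2=M+2s(bq+pc)+Ns^2=M+(N+2)s^2 .
\]
The Lagrange identity \eqref{cor:Legendre} says $NM=s^2+1$, so I substitute $s^2=NM-1$. This yields the factorization
\[
1+y^2+z^2=1+M+(N+2)(NM-1)=M(N+1)^2-(N+1)=(N+1)\bigl(M(N+1)-1\bigr).
\]
Because $x=\lambda(N+1)$ and $N+1>0$, it follows that $x\mid \FPdim(\mathcal R)$ if and only if $\lambda\mid M(N+1)-1$.

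Finally I use the integrality condition itself. By the definition \eqref{eq:kappapq}, $\kappa=M+(a-f-d)\lambda$. Combined with $\kappa=\lambda^2$, this shows $\lambda\mid M$. Hence $M(N+1)-1\equiv-1\pmod{\lambda}$, so $\lambda$ divides $M(N+1)-1$ exactly when $\lambda\mid 1$, that is, when $\lambda=1$. This gives both directions of the equivalence.

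The only step that is not routine is finding the factorization of $1+y^2+z^2$ through the Lagrange identity. Once that is in hand the rest is immediate, so I do not expect a real obstacle beyond checking that algebra.
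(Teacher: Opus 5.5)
Your proof is correct and is essentially the paper's argument. Both use the dimensions from Proposition~\ref{prop:firstcharinteger} and the Lagrange identity \eqref{cor:Legendre} to factor out $1+p^2+q^2$, then use $\lambda\mid b^2+c^2$ to show the remaining factor is $\equiv -1 \pmod{\lambda}$. The differences are cosmetic: you discard $x^2$ first and substitute $s^2=NM-1$ directly, whereas the paper factors all of $\FPdim(\mathcal R)$ as $(1+p^2+q^2)\bigl(b^2+c^2+\lambda^2(1+p^2+q^2)+(bq+pc)^2\bigr)$ and reapplies the Lagrange identity modulo $\lambda$.
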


\begin{proof}
Since $\mathcal{R}$ is integral, Proposition \ref{prop:firstcharinteger} ensures that $\kappa=\lambda^2$ and that 
\begin{align*}
\FPdim(\mathcal{R}) & =1+\lambda^2(1+p^2+q^2)^2+(b+q(bq+pc))^2+(c+p(bq+pc))^2 \\
& = 1+(bq+pc)^2+b^2+c^2 +\lambda^2(1+p^2+q^2)^2+(1+p^2+q^2)(bq+pc)^2.
\end{align*}
It now arises from \eqref{cor:Legendre} that \[ \FPdim(\mathcal{R}) = (1+p^2+q^2)(b^2+c^2+\lambda^2(1+p^2+q^2)+(bq+pc)^2).\]

Since $\lambda$ divides $\kappa$, and so $b^2+c^2$ by \eqref{eq:kappapq}, a new use of \eqref{cor:Legendre} then shows that the second factor in the last displayed equation is congruent to $-1$ modulo $\lambda$. 
Since $\FPdim(X)=\lambda(1+p^2+q^2)$ by Proposition \ref{prop:firstcharinteger}, we infer that the latter divides $\FPdim(\mathcal{R})$ if and only if $\lambda=1$. 
\end{proof}

\begin{cor} \label{cor:2integers2quad}
If $\mathcal{R}$ is integral, then $\chi$ has two integer roots and two quadratic roots. 
\end{cor}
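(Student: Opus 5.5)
By Proposition \ref{prop:firstcharinteger}, integrality gives $\kappa=\lambda^2$. Lemma \ref{lmm:kappaequFPdim} then factors the characteristic polynomial as
\[
\chi(T)=T(T-d-f-\lambda)(T^2-(a-\lambda)T-1).
\]
The plan is to read off the four roots from this factorization. The roots $0$ and $d+f+\lambda$ are integers. It remains to show that the two roots of $T^2-(a-\lambda)T-1$ are quadratic irrationals, i.e.\ not rational.

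Put $m=a-\lambda\in\mathbb Z$. The discriminant of $T^2-mT-1$ is $m^2+4$. By the rational root theorem, a rational root would be an integer dividing $-1$, so it would be $\pm1$. Substituting gives $1\mp m-1=\mp m$, which vanishes only when $m=0$. So the only case where these roots are rational is $m=0$, that is $a=\lambda$, where the roots are $\pm1$.

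The main obstacle is therefore to rule out $a=\lambda$. With $\kappa=\lambda^2$, the definition \eqref{eq:kappapq} gives
\[
\lambda^2=b^2+c^2+(a-\lambda p^2-\lambda q^2)\lambda .
\]
This rearranges to
\[
a=\lambda(1+p^2+q^2)-\frac{b^2+c^2}{\lambda},
\]
which agrees with the formula for $a$ in Theorem \ref{thm:classintegral}. Setting $a=\lambda$ would force $b^2+c^2=\lambda^2(p^2+q^2)$.

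To exclude this, I would use \eqref{cor:Legendre}, which gives $(p^2+q^2)(b^2+c^2)=(bq+cp)^2+1$. Substituting yields $\lambda^2(p^2+q^2)^2=(bq+cp)^2+1$. The two squares $\lambda(p^2+q^2)$ and $bq+cp$ would then differ by $1$. Since $bq+cp>0$ by Remark \ref{rem:divisibility}, both squares are positive, and this is impossible.

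Hence $m\neq 0$. In that case $m^2+4$ is not a perfect square, since $m^2<m^2+4<(|m|+1)^2$ whenever $|m|\ge 2$, and $m^2+4=5$ when $|m|=1$. So the remaining two roots are conjugate quadratic irrationals, which proves the statement.
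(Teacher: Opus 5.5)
Your proposal is correct and follows essentially the same route as the paper. Both reduce, via Lemma~\ref{lmm:kappaequFPdim}, to the irreducibility of $T^2-(a-\lambda)T-1$, note that a rational root would be $\pm1$ and hence force $a=\lambda$ and $b^2+c^2=\lambda^2(p^2+q^2)$, and then contradict \eqref{cor:Legendre}; the paper writes this last step as the factorization $1=(\lambda(p^2+q^2)-(bq+cp))(\lambda(p^2+q^2)+(bq+cp))$, where you say that two positive squares cannot differ by $1$. Your final check that $m^2+4$ is not a perfect square is harmless but redundant, since the rational root argument already excludes rational roots.
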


\textit{Proof.}
Lemma \ref{lmm:kappaequFPdim} and Proposition \ref{prop:firstcharinteger} reduce the proof to showing that $T^2-(a-\lambda)T-1$ is an irreducible polynomial over $\Q$. If not, then it would have two integer roots whose product is $-1$, that is, $-1$ and $1$, and so $a=\lambda$. 
But then, $\lambda^2=\kappa=b^2+c^2+(\lambda-f-d)\lambda$, which implies $b^2+c^2=(f+d)\lambda=\lambda^2(p^2+q^2)$. 
Equation \eqref{cor:Legendre} now derives a contradiction, since otherwise
\begin{flalign*}
 && 1= (q^2+p^2)(b^2+c^2)-(bq+cp)^2= \left(\lambda(p^2+q^2)-(bq+cp) \right)\left( \lambda(p^2+q^2)+(bq+cp)\right). && \square
 \end{flalign*}

\begin{cor} \label{cor:unique-vanishing}
If $\mathcal{R}$ is integral, then both $N_Y$ and $N_Z$ have nonzero determinant. 
\end{cor}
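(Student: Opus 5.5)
The plan is to use the simultaneous diagonalization of the commuting symmetric matrices $N_X,N_Y,N_Z$. Each basis element then has a value at each of the four ring characters. I will show that a vanishing eigenvalue of $N_Y$ (or $N_Z$) would force two of these values to vanish simultaneously, and then rule that out by a rank count.

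\emph{Step 1: the eigenvalues of $N_X$ are simple.} By Lemma~\ref{lmm:kappaequFPdim} and Proposition~\ref{prop:firstcharinteger}, the eigenvalues of $N_X$ are $0$, $d+f+\lambda$, $\beta$ and $\gamma$. Here $\beta,\gamma$ are the roots of $T^2-(a-\lambda)T-1$, and they are irrational conjugates by Corollary~\ref{cor:2integers2quad}. These four numbers are pairwise distinct: $0$ and $d+f+\lambda$ are integers, while $\beta\neq\gamma$ are irrational. Since $N_X$ is a real symmetric matrix with simple spectrum, its commutant consists of polynomials in $N_X$. Because $N_Y$ is rational, we get $N_Y=P(N_X)$ for some $P\in\Q[T]$, and likewise $N_Z=Q(N_X)$ with $Q\in\Q[T]$.

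\emph{Step 2: locate the possible zero eigenvalues.} The eigenvalues of $N_Y$ are $P(0)$, $P(d+f+\lambda)$, $P(\beta)$ and $P(\gamma)$. By the proof of Lemma~\ref{lmm:2a}, $\ker N_X$ is spanned by $(1,0,p,-q)^T$, and $N_Y$ acts on it by $p>0$; hence $P(0)=p\neq0$. Also $P(d+f+\lambda)=\FPdim(Y)>0$. Since $P$ has rational coefficients and $\beta,\gamma$ are Galois conjugate, $P(\beta)=0$ if and only if $P(\gamma)=0$. So if $\det(N_Y)=0$, then $0$ is an eigenvalue of $N_Y$ of multiplicity two, and since $N_Y$ is symmetric, $N_Y$ would have rank $2$. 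The same argument applies to $N_Z$: its kernel eigenvalue is $-q\neq0$ and its Perron eigenvalue is $\FPdim(Z)>0$.

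\emph{Step 3: rank at least $3$.} This is the final step, and I expect it to be easy. The first and third rows of $N_Y$, namely $(0,0,1,0)$ and $(1,d,g,h)$, are independent. If $N_Y$ had rank $2$, the fourth row $(0,e,h,i)$ would be a combination of these two. Its first entry is $0$, so the coefficient of the third row must vanish, which leaves a multiple of $(0,0,1,0)$. This forces $e=0$, contradicting $e=\lambda pq>0$.

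For $N_Z$, rows one and four, $(0,0,0,1)$ and $(1,f,i,j)$, are independent. The second row $(0,c,e,f)$ would then be a multiple of $(0,0,0,1)$, forcing $e=0$, again a contradiction. Hence both determinants are nonzero.

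The only delicate point is Step 1, namely justifying that the Galois conjugation argument pairs the zeros. This rests on the simplicity of the spectrum of $N_X$, which comes from Corollary~\ref{cor:2integers2quad}.
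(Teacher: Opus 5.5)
Your proof is correct, but it reaches the contradiction by a different mechanism from the paper's.

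\textbf{The paper's route.} It computes $\det(N_Y)=e^2-bi$ to get $b\neq0$. The leading $3\times3$ minor $-b$ then shows that $\ker N_Y$ would be a line spanned by a rational vector $\mathbf{v}$ with $v_2\neq0$ and $v_3=0$. This $N_X$-stable rational line must be an eigenline of $N_X$ for a nonzero rational eigenvalue. By Corollary~\ref{cor:2integers2quad} it is therefore the Frobenius--Perron line, whose third coordinate is nonzero, a contradiction.

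\textbf{Your route.} You write $N_Y=P(N_X)$ with $P\in\Q[T]$ and evaluate $N_Y$ on the two rational eigenlines of $N_X$, getting $p$ and $\FPdim(Y)$. Galois conjugation then forces the other two eigenvalues to vanish together, so a singular $N_Y$ would have a two-dimensional kernel. Your rank bound from three rows, which needs only $e>0$, excludes this.

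\textbf{Comparison.} Both arguments rest on the same input: the simple spectrum of $N_X$ with exactly two rational eigenvalues. The paper shows the kernel would be a rational line and kills it by a coordinate check. You show it would be even-dimensional and kill it by a rank count. The paper's route is more elementary, with no commutant or Galois argument, and it treats $N_Z$ via the $Y\leftrightarrow Z$ symmetry. Yours handles $N_Y$ and $N_Z$ uniformly without the determinant formula. It also gives the full spectrum of $N_Y$ as a by-product; for example, $\det(N_Y)=p\,\FPdim(Y)\,P(\beta)P(\gamma)$, where $P(\beta)P(\gamma)\in\Z$ is the norm of an algebraic integer.
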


\begin{proof}
We only show that $N_Y$ has a nonzero determinant, the second case arising from the first one by swapping $Y$ and $Z$. 
Assume by contradiction that $N_Y$ has determinant zero. 
Expanding the determinant of $N_Y$ along the first row, then the resulting $3\times 3$ determinant along the first column, gives $0=\det(N_Y)=e^2-bi$. 
In particular, $b$ is non-zero since $e$ is a positive integer. 

The leading $3\times3$ minor of $N_Y$ has determinant $-b\ne0$, and so the kernel of $N_Y$ is a one-dimensional vector space. 
Since the entries of $N_Y$ are integers, this kernel is spanned by a nonzero rational element $\mathbf{v}=(v_1,v_2,v_3,v_4)^T$. 
Clearly, \[v_3=0, \; \; bv_2+ev_4=0, \; \; v_1+dv_2+hv_4=0, \; \; \text{and} \; \; ev_2+iv_4=0.\]
We cannot have $v_2=0$, since otherwise we would obtain $v_1=v_4=0$. 

Since $\mathcal{R}$ is a fusion ring, the matrices $N_X$ and $N_Y$ commute. 
In particular, $N_X\mathbf{v}$ lies in the kernel of $N_Y$, and it is therefore collinear to $\mathbf{v}$. 
This coefficient of proportionality must be rational since $N_X$ and $\mathbf{v}$ have rational entries. 
The coefficient in the first row of $N_X\mathbf{v}$ being $v_2\neq 0$, we must have  $N_X\mathbf{v}= \FPdim(X)\mathbf{v}$ since Corollary \ref{cor:2integers2quad} claims that $\FPdim(X)$ is the unique nonzero rational eigenvalue of $N_X$. 
In fact, it also tells us that the eigenvalues of $N_X$ are pairwise distinct, which implies that the eigenspace of $N_X$ associated to the eigenvalue $\FPdim(X)$ is one-dimensional and spanned by $(1, \FPdim(X), \FPdim(Y), \FPdim(Z))$.
We get a contradiction since $v_3=0$.
\end{proof}

\section{The integral classification and the special case \texorpdfstring{$\lambda=1$}{lambda=1}}\label{sec:fibonacci}
In this section, we propose to prove Theorems \ref{thm:classintegral} and \ref{thm:classintegral1}.
Recall that $df>0$, that is, $pq>0$.

\subsection{Two Diophantine equations} \label{ss-sec:Markov}
Example \ref{ex:family} shows that it is easy to construct fusion rings from Theorem \ref{thm:main}. 
However, none of them is integral since $\kappa \equiv 2 \pmod{3}$ while $\lambda^2\not\equiv 2 \pmod{3}$.

In fact, even with the criterion $\kappa=\lambda^2$ at hand, it is quite challenging to find by hand a case where $\mathcal{R}$ is integral. 
This rigidity stems from Markov equations, which admit highly constrained positive integer solutions \cite{Mar}. 
More precisely, we will see that $(\lambda(p^2+q^2), bq+pc)$ is a solution of the Markov equation $x^2+y^2+1=3xy$. 
The special case $\lambda=1$ will also show that $(b,q)$ and $(p,c)$ are solutions of the equation $x^2+y^2-3xy=\pm 1$. 
Recall that $(F_n)_{n\in \Z}$ is the Fibonacci sequence.

\begin{thm} \label{thm:solveMarkov}
The nonnegative integer solutions $(k,x,y)$ with $x\geq y$ of the equation $x^2+y^2+1=kxy$ are $(3,1,1)$ and $(3, F_{2n+1}, F_{2n-1})$, where $n$ is any positive integer. 
\end{thm}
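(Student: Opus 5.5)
The plan is the classical Vieta jumping (Hurwitz descent) argument. First I show that $k=3$ is forced, and then I identify the solutions by descending to a minimal one.

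\emph{Reduction to $k=3$.} Let $(k,x,y)$ be a nonnegative solution with $x\geq y$. Since $x^2+y^2+1>0$, we have $kxy>0$, so $x,y,k\geq 1$.
\begin{itemize}
\item Suppose $x=y$. Then $2x^2+1=kx^2$, so $x^2$ divides $1$. Hence $x=y=1$ and $k=3$.
\item Suppose $x>y$. Consider $x^2-kyx+(y^2+1)=0$ as a quadratic in $x$. Its other root is $x'=ky-x=(y^2+1)/x$, which is a positive integer.
\item Since $x\geq y+1$, we get $x'=(y^2+1)/x\leq (y^2+1)/(y+1)\leq y$, with equality only when $y=0$, which is impossible. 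So $x'<y$ unless $y=1$, where $x'=2/x\leq 1$.
\item Thus $(k,y,x')$ is again a solution with $y\geq x'$ and strictly smaller sum $x+y$.
\end{itemize}
Iterating, the process ends at a solution with equal coordinates. That solution is $(k,1,1)$, which forces $k=3$. Since $k$ is preserved along the descent, $k=3$ for every solution.

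\emph{Identifying the solutions for $k=3$.} The descent map $(x,y)\mapsto(y,3y-x)$ is inverted by the ascent $(x,y)\mapsto(3x-y,x)$. Hence every solution is obtained from $(1,1)$ by repeated ascents, and the ascent chain is unique. Starting from $(1,1)$, it produces $(2,1)$, $(5,2)$, $(13,5)$, and so on. I need the Fibonacci identity $F_{m+2}=3F_m-F_{m-2}$, which follows from $F_{m+2}=F_{m+1}+F_m=2F_m+F_{m-1}$ and $F_{m-1}=F_m-F_{m-2}$.

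The chain starts at $(1,1)=(F_1,F_{-1})$ and then reaches $(2,1)=(F_3,F_1)$. By the identity, the $n$-th ascent gives $(F_{2n+1},F_{2n-1})$ for $n\geq 1$. The chain $(F_{2n+1},F_{2n-1})$, $n\geq 0$, begins at $(F_1,F_{-1})=(1,1)$, so it accounts for both families in the statement.

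Conversely, I check that these pairs really are solutions by verifying $F_{2n+1}^2+F_{2n-1}^2+1=3F_{2n+1}F_{2n-1}$. This can be done by induction along the ascent, since ascents preserve solutions. Alternatively, it follows from Cassini-type identities.

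\emph{Main obstacle.} The main subtlety is the descent inequality at the boundary case $y=1$. There $x'$ may equal $y$: for example, $x=2$ gives $x'=1$. This is fine, because that case terminates at $(1,1)$ rather than continuing the descent. I will also note that $x'\geq 1$ needs justification; it holds because $x'=(y^2+1)/x>0$ and $x'=ky-x$ is an integer.
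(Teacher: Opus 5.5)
Your proof is correct, but it takes a different route from the paper, whose proof is essentially two citations. There, Hurwitz's theorem (\cite[Satz 6]{Hur}, three-variable case) bounds $k\leq 3$. The case $k\leq 2$ is then discarded because $x^2+y^2+1-kxy\geq (x-y)^2+1>0$. Finally, the solutions of $x^2+y^2+1=3xy$ are quoted from \cite{BK}.

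You instead run a single Vieta-jumping descent for arbitrary $k$. Since $k$ is invariant under $(x,y)\mapsto(y,ky-x)$, and the only solution with equal coordinates is $(1,1)$ (which forces $k=3$), you get the bound on $k$ and the full list of solutions at once. Moreover, the inverse ascent $(x,y)\mapsto(3x-y,x)$ is exactly the recurrence $F_{m+2}=3F_m-F_{m-2}$ (recorded in the paper as Lemma~\ref{lmm:Fiboelementary}). So the odd-indexed Fibonacci pairs appear naturally, including the starting pair $(F_1,F_{-1})=(1,1)$.

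The paper's version is shorter and relies on Hurwitz's more general result, which covers arbitrarily many variables. Yours is elementary and self-contained, and it explains where the Fibonacci structure comes from rather than importing it.

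One small slip: equality in $(y^2+1)/(y+1)\leq y$ holds exactly when $y=1$, not when $y=0$. This is harmless, since you treat $y=1$ separately in the next sentence, where necessarily $x=2$ and $x'=1$.
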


\begin{proof}
A result of Hurwitz with $n=3$ gives $k\leq 3$ \cite[Satz 6]{Hur}. Since this diophantine equation clearly has no real solution if $k\leq 2$, we must have $k=3$. We conclude thanks to \cite{BK}. 
\end{proof}

\begin{thm} \label{thm:solveMarkov2}
The nonnegative integer solutions $(x,y)$ with $x\geq y$ of the equation $x^2+y^2-1=3xy$ are $(F_{2n}, F_{2n-2})$, where $n$ is any positive integer. 
\end{thm}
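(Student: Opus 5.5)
We prove Theorem~\ref{thm:solveMarkov2} by Vieta jumping (descent), reducing it to the unique minimal solution and then identifying the orbit with even-indexed Fibonacci numbers.

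First we verify that the pairs $(F_{2n},F_{2n-2})$ are solutions. The identity
\[
F_{m+2}^2+F_m^2-3F_{m+2}F_m=(-1)^{m+1}
\]
follows by induction on $m$. Indeed, writing $Q(x,y)=x^2+y^2-3xy$, one has $Q(3x-y,x)=Q(x,y)$, and $F_{m+4}=3F_{m+2}-F_m$. Alternatively, it follows from Cassini-type identities. With $m=2n-2$ even, we get $Q(F_{2n},F_{2n-2})=-1$, that is, $x^2+y^2-1=3xy$. (For $n=1$ the pair is $(1,0)$, and indeed $1+0-1=0$.)

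For the converse, let $(x,y)$ be a nonnegative solution with $x\ge y$. If $y=0$, then $x^2=1$, so $(x,y)=(1,0)=(F_2,F_0)$. Now suppose $y\ge1$. Viewing the equation as the quadratic $T^2-3yT+(y^2-1)=0$ in $T$, its roots are $x$ and $x'=3y-x=(y^2-1)/x$. Hence $x'$ is an integer with $0\le x'<y$, the strict inequality because $x\ge y$ gives $x'\le (y^2-1)/y<y$. So $(y,x')$ is again a nonnegative solution with first coordinate at least the second, and with strictly smaller first coordinate $y<x$ whenever $y\ge 1$; note that $x=y$ would force $-y^2-1=0$, which is impossible. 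By descent we reach $y=0$, that is, the base solution $(1,0)$.

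Reversing the steps, every solution is obtained from $(1,0)$ by iterating $(x,y)\mapsto(3x-y,x)$. Since $F_{2n+2}=3F_{2n}-F_{2n-2}$ (apply $F_{k+2}=F_{k+1}+F_k$ twice), this map sends $(F_{2n},F_{2n-2})$ to $(F_{2n+2},F_{2n})$, and an induction concludes. The only point requiring care is the descent inequality $0\le x'<y$ together with the integrality of $x'$; both come directly from Vieta's formulas, so we expect no real obstacle.
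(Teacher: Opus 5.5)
Your proof is correct apart from a sign slip, and it takes a different route from the paper. The paper gives no argument of its own here and simply cites \cite{BK}. Your Vieta-jumping descent is self-contained, and all of its steps are sound:
\begin{itemize}
\item integrality of $x'=3y-x=(y^2-1)/x$;
\item the bounds $0\le x'<y$;
\item the exclusion of $x=y$;
\item termination at $(1,0)$;
\item the identification of the orbit of $(1,0)$ under $(x,y)\mapsto(3x-y,x)$ with the pairs $(F_{2n},F_{2n-2})$, via $F_{2n+2}=3F_{2n}-F_{2n-2}$.
\end{itemize}
What your approach buys over the citation is an elementary, checkable proof. Moreover, the same descent run on $x^2+y^2+1=kxy$ shows that a solution with minimal $x+y$ has $x=y=1$, hence $k=3$. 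So it would also recover Theorem~\ref{thm:solveMarkov} without Hurwitz's result, making both Diophantine inputs of the paper self-contained. The citation is shorter but leaves the fact as a black box.

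The slip is in the verification direction. The identity $F_{m+2}^2+F_m^2-3F_{m+2}F_m=(-1)^{m+1}$ is false. For $m=0$ the left side is $Q(1,0)=1$, and for $m=1$ it is $Q(2,1)=-1$, so the correct right side is $(-1)^m$. You then write $Q(F_{2n},F_{2n-2})=-1$, ``that is, $x^2+y^2-1=3xy$''. But $Q=-1$ is the equation $x^2+y^2+1=3xy$ of Theorem~\ref{thm:solveMarkov}, not the one at hand. The two sign errors cancel, which is why your conclusion is right. Your parenthetical check of $(1,0)$ comes out correctly, yet it actually contradicts the identity you stated.

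The cleanest fix is to drop the general identity. Note that $Q(1,0)=1$, and that $(x,y)\mapsto(3x-y,x)$ preserves $Q$ and sends $(F_{2n},F_{2n-2})$ to $(F_{2n+2},F_{2n})$. This is exactly the induction you already run at the end. With that correction the proof is complete.
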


\begin{proof}
See \cite{BK}. 
\end{proof}

\subsection{Proof of Theorem \ref{thm:classintegral}} \label{ss-sec:proofintegralcase}
In case $(b,c)\neq (0,0)$, write $t$ for the quotient of $b^2+c^2+(p^2+q^2)\kappa$ by $bq+pc$ (note that $bq+pc$ is a positive integer since $p$ and $q$ are positive).
\begin{lemma}\label{lem:scale-one}
Assume that $qc-pb=1$ and $\kappa=\lambda^2$. Then  \[ \left( \lambda(p^2+q^2) \right)^2 +(bq+pc)^2+1 = \frac{t}{\lambda} \left(\lambda(p^2+q^2)\right) (bq+pc). \] 
Moreover, if $t=3\lambda$, then $g,h,i$, and $j$ are as in Theorem \ref{thm:main} $(2c)$ if and only if
\[\begin{cases} 
g= q^2\left(p\lambda^2-3\lambda c+b\right)+p+2b+pb^2 \\

\\

h = q\left(pb+c^2-3\lambda pc+p^2\lambda^2\right) \\

\\

i= p \left( qc+c^2-3\lambda pc+ p^2 \lambda^2 \right) \\ 

\\ 

j= (p^2-qc+2)c+ \lambda p^2(3b- \lambda q)-q
\end{cases}. \]
\end{lemma}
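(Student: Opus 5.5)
The first identity is pure algebra. Set $u=\lambda(p^2+q^2)$ and $w=bq+pc$. By definition of $t$ we have $tw=b^2+c^2+(p^2+q^2)\lambda^2$, because $\kappa=\lambda^2$. Multiplying by $u/\lambda=p^2+q^2$ gives
\[
\frac{t}{\lambda}\,uw=(p^2+q^2)(b^2+c^2)+\lambda^2(p^2+q^2)^2 .
\]
Now apply the Lagrange identity \eqref{cor:Legendre}, which reads $(p^2+q^2)(b^2+c^2)=w^2+1$ since $qc-pb=1$. The right-hand side becomes $w^2+1+u^2$, which is the claim.

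For the second part, the plan is to substitute directly into the formulas of Theorem \ref{thm:main} $(2c)$.

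First, with $t=3\lambda$ we get $\tau=t-w=3\lambda-bq-pc$. Also $c^2+b^2-\kappa=b^2+c^2-\lambda^2$.

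Next, compute $cR-pK$ entrywise and check that the $(1,1)$-entry equals the proposed $g$. Concretely,
\[
cR_{11}-pK_{11}=2bcq-cq^2\tau-p(b^2-1)-pq^2(b^2+c^2-\lambda^2).
\]
After substituting $\tau$, the $q^2$-terms are $q^2\bigl(2bc/q\cdots\bigr)$. It is cleaner to reduce everything using $qc=1+pb$ whenever a product $qc$ appears. For instance, $-cq^2\tau=-3\lambda cq^2+bcq^3+pc^2q^2$, and $pc^2q^2=pc\cdot q(qc)=pcq(1+pb)$. I would carry this reduction systematically and compare with $q^2(p\lambda^2-3\lambda c+b)+p+2b+pb^2$.

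The entries $h$ and $i$ come from $(cR-pK)_{12}$ and $(cR-pK)_{22}$. By the remark following Remark \ref{rem:divisibility}, these agree with $(-bR+qK)_{11}$ and $(-bR+qK)_{12}$. Finally, $j=(-bR+qK)_{22}$.

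Since the formulas are linear in $(g,h,i,j)$ given $b,c,p,q,\lambda$, the statement ``if and only if'' is just equality of the two expressions for each entry. So it suffices to verify four polynomial identities in $\mathbb Z[b,c,p,q,\lambda]$ modulo the relation $qc-pb=1$.

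The main obstacle is bookkeeping: the raw expressions have terms of degree up to $5$ in $b,c,p,q$, while the targets are much lower degree. To control this, I would treat each difference (raw minus target) as a polynomial and show that it lies in the ideal generated by $qc-pb-1$. The method is to rewrite every occurrence of $qc$ as $1+pb$ until no monomial contains both $q$ and $c$, then check that the result is zero. The symmetry swapping $Y$ and $Z$, namely $(b,c,p,q)\mapsto(c,b,q,p)$ together with a sign change of $qc-pb$, relates $g$ to $j$ and $h$ to $i$. This can serve as a consistency check, though because of the sign change it is not a free reduction.
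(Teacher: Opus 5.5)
Your proposal is correct and follows essentially the same route as the paper. The first identity comes from the Lagrange identity. For the second part, you substitute $\tau=3\lambda-bq-pc$ and reduce entrywise with $qc=1+pb$; the paper writes this out only for $g$ and calls the other three analogous. Your explicit use of the overlap remark, to cut the six scalar equations of Theorem~\ref{thm:main}~$(2c)$ down to four, makes precise a step the paper leaves implicit.
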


\begin{proof}
The displayed equation is a direct consequence of \eqref{cor:Legendre}. 
Since $\tau=3\lambda-bq-pc$, one has
\begin{align*}
g &= 2bqc-q^2c(3\lambda-bq-pc)-pb^2+p-pq^2(c^2+b^2-\lambda^2) \\ 
& = q^2(p\lambda^2-3\lambda c)+p+2b(1+pb)+q^2b(1+pb)-pb^2-pq^2b^2 \\ 
& = q^2\left(p\lambda^2-3c\lambda+b\right)+p+2b+pb^2
\end{align*}
and the remaining three calculations rely on the same approach. 
\end{proof}

In case $\mathcal{R}$ is a simple fusion ring, Theorem \ref{thm:main} tells us that $t$ is an integer, but we did not say anything more about its value. 
The rigidity of Markov equations actually forces $t$ to be $3\lambda$ if $\mathcal{R}$ is integral.
This explains why we can provide a more explicit classification in the integral case (and also why this is challenging to find one by hand). 

\begin{lemma}\label{lem:scale-onebis}
If $\mathcal{R}$ is a simple integral fusion ring, then $t=3\lambda, b$ and $c$ are positive, and there is an integer $n\geq 3$ for which $\lambda(p^2+q^2)=F_{2n+1}$ and $qb+pc=F_{2n-1}$.  
\end{lemma}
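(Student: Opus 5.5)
We first place ourselves in the setting of the earlier results. Since $\mathcal{R}$ is simple, it is not one of the rings of Sections \ref{sec:nonselfdual} and \ref{sec:selfdualdeg}, so (within the standing assumptions of this section) $df>0$. Theorem \ref{thm:main} then gives $qc-pb=1$ and shows that $t$ is an integer. Integrality gives $\kappa=\lambda^2$ by Proposition \ref{prop:firstcharinteger}. Set $x=\lambda(p^2+q^2)$ and $y=bq+pc$, both positive integers. Lemma \ref{lem:scale-one} gives
\[ x^2+y^2+1=\frac{t}{\lambda}\,xy. \]
Hence $k:=t/\lambda=(x^2+y^2+1)/(xy)$ is a positive rational number.

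The next step is to show that $k$ is an integer, so that Theorem \ref{thm:solveMarkov} applies. We try to show that $\lambda$ divides $t$. Since $t$ is an integer, $t=x^2/(\lambda y)+y/\lambda+1/(\lambda y)$, and therefore $k=(x^2+y^2+1)/(xy)$. Note that $x$ is divisible by $\lambda$ and $x^2=\lambda^2(p^2+q^2)^2$. Write $t\,y=b^2+c^2+(p^2+q^2)\lambda^2$. Also, $\lambda$ divides $b^2+c^2$, because $\lambda\mid\kappa$ and $\kappa=b^2+c^2+(a-f-d)\lambda$. Combining these facts with Lagrange's identity \eqref{cor:Legendre}, $(p^2+q^2)(b^2+c^2)=y^2+1$, we aim to show that $ty\equiv 0 \pmod{\lambda}$ and that $\gcd(y,\lambda)=1$. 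The coprimality is clear, since $y^2\equiv -1\pmod\lambda$. Together these give $\lambda\mid t$, so $k\in\mathbb{Z}$. Checking this divisibility carefully is the main obstacle. If it fails, an alternative is to argue directly that $x^2+y^2+1=kxy$ with rational $k$ forces $xy\mid x^2+y^2+1$: we have $y\mid x^2+1$ because $ty=\lambda^{-1}\cdot(\ldots)$, and $x\mid y^2+1$ because $y^2+1=(p^2+q^2)(b^2+c^2)$ is divisible by $\lambda(p^2+q^2)=x$. A coprimality argument would then finish.

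Once $k$ is an integer, Theorem \ref{thm:solveMarkov} forces $k=3$, so $t=3\lambda$. It also forces $\{x,y\}$ to be $\{1,1\}$ or $\{F_{2n+1},F_{2n-1}\}$ with $n\geq1$. To fix the order, observe that $x=\lambda(p^2+q^2)\ge y$: Lagrange's identity gives $y^2<(p^2+q^2)(b^2+c^2)$, and $b^2+c^2\le$ something of the order of $\lambda(p^2+q^2)\cdot\lambda$. More simply, if $y>x$ then the pair $(y,x)$ would be a consecutive Markov pair, and we rule this out by comparing sizes through $\lambda\mid b^2+c^2$. Hence $x=F_{2n+1}$ and $y=F_{2n-1}$.

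Finally, small $n$ must be excluded, and $b,c>0$ must be shown. If $n\le2$, then $x\in\{1,2,5\}$, which means $p^2+q^2\le5$ with $p,q\ge1$ coprime, so $(p,q)$ is one of $(1,1),(1,2),(2,1)$. In each case we solve $qc-pb=1$ and $qb+pc=y$. Then we compute $g,h,i,j$ from Lemma \ref{lem:scale-one} and find a negative entry, or else find that $b$ or $c$ is not a nonnegative integer. Positivity of $b$ and $c$ for $n\ge3$ goes as follows. We have $c\ge1$ since $qc=1+pb$. If $b=0$, then $q=1$ and $y=pc=p$, so $x=\lambda(p^2+1)=F_{2n+1}$ and $y=p=F_{2n-1}$. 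This contradicts $F_{2n+1}<F_{2n-1}^2+1$ for $n\ge3$ (indeed $F_{2n+1}<3F_{2n-1}$). Hence $b>0$.
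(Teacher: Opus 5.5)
There is a genuine gap at the step where you order the Markov pair, i.e.\ where you claim $x=\lambda(p^2+q^2)>y=bq+pc$. Your sketch there (a bound on $b^2+c^2$ ``of the order of'' $\lambda^2(p^2+q^2)$, or ``comparing sizes through $\lambda\mid b^2+c^2$'') cannot work from the arithmetic relations you use. Take $(\lambda,p,q,b,c)=(1,1,2,5,3)$. Then $\gcd(p,q)=1$, $qc-pb=1$ and $\lambda\mid b^2+c^2$, and with $\kappa=\lambda^2=1$ one gets $t=(34+5)/13=3$. Yet $(x,y)=(5,13)$ has $y>x$.

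The missing ingredient is the nonnegativity of a structure constant, which you never invoke; the paper uses $a$. From $\kappa=\lambda^2$ one has $a=\lambda+x-(b^2+c^2)/\lambda$. The relation $t=3\lambda$ gives $(b^2+c^2)/\lambda=3y-x$, so $a=\lambda+2x-3y$. Since $\lambda\le x/2<x$, the condition $a\ge 0$ forces $y<x$; in the example above $a=-28$. This also discards the Markov solution $(1,1)$, since $x\ge 2$. Your small-$n$ exclusion and your $b>0$ argument both assume $x=F_{2n+1}$ and $y=F_{2n-1}$, so they depend on this repair.

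The divisibility step you call the main obstacle is in fact immediate from the facts you list, exactly as in the paper. Since $ty=b^2+c^2+(p^2+q^2)\lambda^2$ and $\lambda\mid b^2+c^2$, we get $\lambda\mid ty$. Lagrange's identity gives $y^2\equiv-1\pmod{\lambda}$, so $\gcd(y,\lambda)=1$ and hence $\lambda\mid t$; then Theorem~\ref{thm:solveMarkov} yields $t=3\lambda$.

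Once the ordering is fixed, the rest of your argument is correct, though it is organised differently from the paper's.
\begin{itemize}
\item Your direct check of $n\le 2$ works. For $(p,q)=(1,2)$ you get $c=4/5$. The cases $(p,q)=(1,1)$ and $(2,1)$ give $\lambda=1$, $b=0$, $c=1$, hence $h=-1$ by Lemma~\ref{lem:scale-one}. The paper instead proves $b,c>0$ first and then notes that $bq+pc\in\{1,2\}$ is incompatible with positivity and $qc-pb=1$.
\item Your proof of $b>0$ is a valid alternative. You use $F_{2n+1}<F_{2n-1}^2+1$ for $n\ge 3$, whereas the paper observes that $b=0$ forces $q=c=\lambda=1$ and then $h=-1$.
\end{itemize}
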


\begin{proof}
Proposition \ref{prop:firstcharinteger} yields $\kappa=\lambda^2$ and \eqref{eq:kappapq} proves that $\lambda$ divides $b^2+c^2$. 
Equation \eqref{cor:Legendre} now claims that $\lambda$ is coprime to $bq+pc$. 
We infer that $\lambda(bq+pc)$ divides $b^2+c^2+(p^2+q^2)\kappa$, that is, $\lambda$ divides $t$. 
 Lemma \ref{lem:scale-one} now tells us that  $(\lambda(p^2+q^2), bq+pc)$ is a solution of the Markov equation $x^2+y^2+1=(t/\lambda)xy$ and Theorem \ref{thm:solveMarkov} gives $t=3\lambda$. 

We now show the positivity of $b$ and $c$. Since $qc-pb=1$ by Theorem \ref{thm:main} $(2a)$, we cannot have $c=0$. 
If $b=0$, then $q=c=1$. 
Since $\lambda$ divides $b^2+c^2$ by the foregoing, $\lambda$ must be $1$. 
Theorem \ref{thm:main} now shows that $h$ equals the $(1,1)$-entry of $K$, namely $-1$, a contradiction. 

The equality $t=3\lambda$, that is, $b^2+c^2+(p^2+q^2)\lambda^2=3\lambda(bq+pc)$, and \eqref{eq:kappapq} give \[a = \frac{\lambda^2-(b^2+c^2)}{\lambda}+\lambda(p^2+q^2) = \lambda+ 2\lambda (p^2+q^2)-3(bq+pc) < 3\lambda (p^2+q^2)-3(bq+pc).\] The nonnegativity of $a$ yields $\lambda(p^2+q^2) > bq+pc$ and Theorem \ref{thm:solveMarkov} ensures that $\lambda(p^2+q^2)=F_{2n+1}$ and $bq+pc=F_{2n-1}$ for some positive integer $n$.  

If $n=1$, then $bq+pc=1$, contradicting the positivity of $b,c,p$, and $q$. 
If $n=2$, then $bq+pc=2$ and we must have $b=q=p=c=1$, contradicting $qc-pb= 1$. 
Thus $n\geq 3$ and we are done. 
\end{proof}

We now prove Theorem \ref{thm:classintegral}. 
Assume that $\mathcal{R}$ is a simple integral fusion ring. 
The expression for $\lambda$ comes from Lemma \ref{lem:scale-onebis}. 
Then, Proposition \ref{prop:firstcharinteger} gives $\kappa=\lambda^2$ and \eqref{eq:kappapq} leads to the value of $a$. 
We obtain those of $b$ and $c$ by solving the equations $qc-pb=1$ and $qb+pc=F_{2n-1}$, the last one arising from Lemma \ref{lem:scale-onebis}. The expressions for $d,e$, and $f$ are obvious, and those for $g,h,i$, and $j$ follow from Lemma \ref{lem:scale-one} since $t=3\lambda$ according to Lemma \ref{lem:scale-onebis}. 

Conversely, assume that the constants are as in the displayed system and let us show that $\mathcal{R}$ is a simple integral fusion ring. 
Comparing the value of $a$ with that in \eqref{eq:kappapq} yields $\kappa=\lambda^2$. 

 We have $qc-pb=1$, and so Theorem \ref{thm:main} $(2a)$. 
Clearly, $qb+pc=F_{2n-1}$ and $\lambda(p^2+q^2)=F_{2n+1}$. 
Combining Theorem \ref{thm:solveMarkov} and Lemma \ref{lem:scale-one} gives $t=3\lambda$ and Theorem \ref{thm:main} $(2b)$ holds. 
Lemma \ref{lem:scale-one} gives Theorem \ref{thm:main} $(2c)$. 
Theorem \ref{thm:main} and Proposition \ref{prop:firstcharinteger} now prove the desired conclusion. \qed

\subsection{The special case \texorpdfstring{$\lambda=1$}{lambda=1}} \label{ss-sec:casegcd1}
As explained right after Theorem \ref{thm:classintegral}, the constants defined in this theorem can be either negative or nonintegral. 
Remarkably, if $\kappa=\lambda=1$, then for all integers $n\geq 3$, there are parameters $p$ and $q$ making all of them both nonnegative and integral. 

For the convenience of the reader, we record some well-known properties of the Fibonacci sequence that we will use several times in this subsection. 
The first one is a trivial calculation. 

\begin{lmm} \label{lmm:Fiboelementary}
We have $F_n=3F_{n-2}-F_{n-4}$ for all integers $n$. 
\end{lmm}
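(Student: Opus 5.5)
The identity $F_n=3F_{n-2}-F_{n-4}$ follows by applying the defining recurrence $F_{m+2}=F_{m+1}+F_m$ twice and eliminating the odd-indexed neighbours. Since the Fibonacci sequence is defined for all $n\in\Z$ via $F_{n+2}=F_{n+1}+F_n$, equivalently $F_n=F_{n-1}+F_{n-2}$ for every integer $n$, no case distinction on the sign of $n$ is needed.

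Fix an integer $n$. First, I apply the recurrence at index $n$ to get $F_n=F_{n-1}+F_{n-2}$. Next, I apply it at index $n-1$ to rewrite $F_{n-1}=F_{n-2}+F_{n-3}$, which gives $F_n=2F_{n-2}+F_{n-3}$. Finally, the recurrence at index $n-2$, read backwards, gives $F_{n-3}=F_{n-2}-F_{n-4}$. Substituting this yields
\[
F_n=2F_{n-2}+F_{n-2}-F_{n-4}=3F_{n-2}-F_{n-4},
\]
as claimed.

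There is no real obstacle here. The only point to watch is that every use of the recurrence is legitimate for arbitrary integer indices. This holds because the extension of $(F_n)$ to negative indices, as fixed just before Theorem \ref{thm:classintegral}, is defined precisely so that the recurrence holds on all of $\Z$ (for instance $F_{-1}=1$ and $F_{-2}=-1$).
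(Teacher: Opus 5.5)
Your proposal is correct and is essentially the paper's own one-line proof, the chain $F_n=F_{n-1}+F_{n-2}=2F_{n-2}+F_{n-3}=3F_{n-2}-F_{n-4}$. You are also right that this is valid for every integer $n$, because the recurrence holds on all of $\Z$.
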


\begin{proof}
Clear since $F_n=F_{n-1}+F_{n-2}=2F_{n-2}+F_{n-3}=3F_{n-2}-F_{n-4}$. 
\end{proof}

 The next one is Vajda's identity, a proof of which can be found in  \cite[equation (20a)]{Vaj}.

\begin{thm} \label{thm:Vajda}
We have $F_{u+v}F_{u+w}-F_uF_{u+v+w}=(-1)^u F_vF_w$ for all integers $u,v$, and $w$. 
\end{thm}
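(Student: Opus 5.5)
We will prove the identity via Binet's formula, which turns it into a short algebraic cancellation. Let $\varphi=(1+\sqrt5)/2$ and $\psi=(1-\sqrt5)/2$ be the roots of $T^2-T-1$, so that $\varphi+\psi=1$, $\varphi\psi=-1$ and $\varphi-\psi=\sqrt5$.

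\emph{Step 1: Binet's formula on all of $\mathbb Z$.} We first check that $F_n=(\varphi^n-\psi^n)/\sqrt5$ for every $n\in\mathbb Z$. The sequence $G_n=(\varphi^n-\psi^n)/\sqrt5$ satisfies $G_{n+2}=G_{n+1}+G_n$ for all $n\in\mathbb Z$, because $\varphi^2=\varphi+1$ and $\psi^2=\psi+1$. Moreover $G_0=0=F_0$ and $G_1=1=F_1$. The recurrence $F_{n+2}=F_{n+1}+F_n$ determines a bi-infinite sequence uniquely from two consecutive terms, both forwards and backwards via $F_n=F_{n+2}-F_{n+1}$. Hence $G_n=F_n$ for all $n$, and in particular for negative indices such as $F_{-1}=1$, $F_{-2}=-1$. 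This step is the only point needing care, since the identity is claimed for arbitrary integers $u,v,w$, and it is where I expect the main (mild) obstacle.

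\emph{Step 2: expansion and cancellation.} Multiply the left-hand side by $5$ and substitute Binet's formula:
\[
5\bigl(F_{u+v}F_{u+w}-F_uF_{u+v+w}\bigr)=(\varphi^{u+v}-\psi^{u+v})(\varphi^{u+w}-\psi^{u+w})-(\varphi^{u}-\psi^{u})(\varphi^{u+v+w}-\psi^{u+v+w}).
\]
The pure terms $\varphi^{2u+v+w}$ and $\psi^{2u+v+w}$ occur in both products and cancel. Each of the four surviving mixed terms contains the factor $\varphi^u\psi^u=(\varphi\psi)^u=(-1)^u$. Factoring it out leaves
\[
(-1)^u\bigl(\varphi^{v+w}+\psi^{v+w}-\varphi^v\psi^w-\psi^v\varphi^w\bigr)=(-1)^u(\varphi^v-\psi^v)(\varphi^w-\psi^w)=5(-1)^uF_vF_w.
\]
Dividing by $5$ gives the identity.

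As a cross-check, or as an alternative avoiding irrationalities, one can use $Q=\begin{pmatrix}1&1\\1&0\end{pmatrix}$ and the relation $Q^n=\begin{pmatrix}F_{n+1}&F_n\\F_n&F_{n-1}\end{pmatrix}$, valid for all $n\in\mathbb Z$ since $\det Q=-1$. The identity then follows from comparing entries in $Q^{u+v}Q^{u+w}=Q^uQ^{u+v+w}$ together with $\det Q^u=(-1)^u$. The Binet route is shorter, however, so that is the one we will write out.
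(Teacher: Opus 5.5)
Your proof is correct. The paper does not prove this statement at all; it only quotes it from Vajda's book \cite[equation (20a)]{Vaj}. Your Binet computation is a standard, self-contained replacement, and the expansion checks out: once the pure powers $\varphi^{2u+v+w}$ and $\psi^{2u+v+w}$ cancel, the four mixed terms factor as $(\varphi\psi)^u(\varphi^v-\psi^v)(\varphi^w-\psi^w)$.

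What your version adds over the citation is an explicit treatment of negative indices, and the paper genuinely needs them. It applies the identity with $w=l-k=-1$ (when $n$ is odd) in Lemma~\ref{lmm:tracedet}, and with $v=-2$ in the proof of Theorem~\ref{thm:classintegral1}. Your Step~1 covers these cases directly: $(\varphi^n-\psi^n)/\sqrt5$ and $F_n$ satisfy the same two-sided recurrence with the same values at $n=0,1$, and $(\varphi\psi)^u=(-1)^u$ holds for every $u\in\mathbb Z$. The citation, by contrast, leaves the reader to check the range of validity in the source.

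One minor point about your matrix cross-check: as written it is only a sketch. Set $D(u)=F_{u+v}F_{u+w}-F_uF_{u+v+w}$. Comparing $(1,1)$-entries of $Q^{u+v}Q^{u+w}=Q^{u}Q^{u+v+w}$ gives $D(u)=-D(u+1)$. The identity then follows from $D(0)=F_vF_w$ by induction in both directions, and $\det Q^u$ plays no role in that argument. This does not affect the Binet proof you actually write out.
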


The next formula derives from \cite[equation (8)]{Vaj} with $m=n+1$ and $G_n=F_{n+u}$. 

\begin{thm} \label{thm:Vajda2}
We have $F_nF_{n+u}+F_{n+1}F_{n+1+u}= F_{2n+u+1}$ for all integers $n$ and $u$. 
\end{thm}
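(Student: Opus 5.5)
The plan is to derive the identity from the addition formula
\[
F_{m+k}=F_{m+1}F_k+F_mF_{k-1}\qquad (m,k\in\Z),
\]
applied with the shifted choice $m=n$ and $k=n+u+1$. This choice gives
\[
F_{2n+u+1}=F_{n+1}F_{n+u+1}+F_nF_{n+u},
\]
which is exactly the claimed equality. So the proof reduces to establishing the addition formula for all integers $m,k$, including negative ones.

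To prove the addition formula I will use the matrix
\[
Q=\begin{pmatrix}1&1\\1&0\end{pmatrix}.
\]
The first step is to show that
\[
Q^m=\begin{pmatrix}F_{m+1}&F_m\\F_m&F_{m-1}\end{pmatrix}
\]
for every $m\in\Z$.

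For $m\geq 0$ this is a routine induction. The case $m=0$ is the identity matrix, since $F_1=1$, $F_0=0$ and $F_{-1}=1$. The inductive step $Q^{m+1}=Q^mQ$ reproduces the recurrence $F_{m+2}=F_{m+1}+F_m$.

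For negative $m$, note that $\det Q=-1$, so $Q$ is invertible in integer matrices with
\[
Q^{-1}=\begin{pmatrix}0&1\\1&-1\end{pmatrix}.
\]
I then run the induction downward via $Q^{m-1}=Q^mQ^{-1}$. Multiplying out shows the new entries are $F_m$, $F_{m+1}-F_m=F_{m-1}$, and $F_{m-1}-F_{m-2}$ reorganised appropriately. This uses only the recurrence $F_{j}=F_{j+2}-F_{j+1}$, which is how the sequence is extended to negative indices. Alternatively, one can observe that both sides satisfy the same two-term recurrence in $m$, in either direction, and agree at $m=0,1$.

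With the formula for $Q^m$ in hand, I take $Q^{m+k}=Q^mQ^k$ and compare $(1,2)$-entries:
\[
F_{m+k}=F_{m+1}F_k+F_mF_{k-1}.
\]
This is the addition formula, and the substitution above finishes the proof.

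No step is a real obstacle. The only point needing care is the validity for negative indices, which is why I use the invertibility of $Q$ rather than a purely combinatorial argument. As a fallback, the identity could also be extracted from Theorem \ref{thm:Vajda}, or checked directly by noting that for fixed $u$ both sides, as functions of $n$, satisfy a linear recurrence with characteristic roots $\varphi^2,\varphi^{-2},-1$ and agree at three consecutive values of $n$. The matrix route is the one I would write.
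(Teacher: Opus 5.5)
Your proof is correct and follows essentially the paper's route. The paper derives the statement from Vajda's addition formula (8) for a Fibonacci-type sequence $G$, namely $G_{n+m}=F_{m-1}G_n+F_mG_{n+1}$ with $m=n+1$ and $G_k=F_{k+u}$; this is exactly your $F_{m+k}=F_{m+1}F_k+F_mF_{k-1}$ at $m=n$, $k=n+u+1$, and your $Q$-matrix argument makes it self-contained and valid for all integer indices. One small slip in your downward induction: the $(2,2)$-entry of $Q^mQ^{-1}$ is $F_m-F_{m-1}=F_{m-2}$, not "$F_{m-1}-F_{m-2}$".
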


In particular, $F_n^2+F_{n+1}^2=F_{2n+1}$ for all integers $n$. 
Brown roughly proved the converse of this identity in \cite{Bro}, in answer to Problem B-$62$ posed by Brother Alfred. 
The condition $n,i,j\geq 2$ is imposed in order to rule out the case $F_1=F_2=1$. 

\begin{thm} \label{thm:Brown}
If $F_{2n+1}= F_i^2+F_j^2$ for some integers $n,i,j\geq 2$, then $(i,j)\in \{(n,n+1), (n+1,n)\}$. 
\end{thm}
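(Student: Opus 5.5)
We will prove this directly from the monotonicity of the Fibonacci sequence, rather than through Brown's original argument. By symmetry, assume $i\leq j$. Two facts do all the work. First, the special case $u=0$ of Theorem \ref{thm:Vajda2}, noted right after it, gives $F_{2n+1}=F_n^2+F_{n+1}^2$. Second, the sequence $(F_m)_{m\geq 2}$ is strictly increasing and positive, since $F_2=1<F_3=2$ and $F_{m+1}=F_m+F_{m-1}>F_m$ for $m\geq 3$. This second fact is where the hypotheses $n,i,j\geq 2$ are used: they exclude the coincidence $F_1=F_2$. The plan is to pin down $j$ by a two-sided size comparison and then read off $i$.

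\emph{Upper bound on $j$.} Suppose $j\geq n+2$. Then
\[F_j^2\geq F_{n+2}^2=(F_n+F_{n+1})^2>F_n^2+F_{n+1}^2=F_{2n+1}.\]
This contradicts $F_{2n+1}=F_i^2+F_j^2\geq F_j^2$. Hence $j\leq n+1$.

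\emph{Lower bound on $j$.} Suppose $j\leq n$. Then $i\leq j\leq n$, and monotonicity together with $F_n<F_{n+1}$ (valid because $n\geq 2$) gives
\[F_i^2+F_j^2\leq 2F_n^2<F_n^2+F_{n+1}^2=F_{2n+1}.\]
This is again a contradiction. Hence $j=n+1$.

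\emph{Conclusion.} Substituting $j=n+1$ gives $F_i^2=F_{2n+1}-F_{n+1}^2=F_n^2$. Both values are positive, so $F_i=F_n$. Since $i,n\geq 2$ and the sequence is injective on indices $\geq 2$, we get $i=n$. Undoing the normalization $i\leq j$ yields $(i,j)\in\{(n,n+1),(n+1,n)\}$.

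There is no substantial obstacle. The only care needed is tracking where $n,i,j\geq 2$ enter: the strict inequality $F_n<F_{n+1}$ uses $n\geq 2$, and the final injectivity uses $i,n\geq 2$.
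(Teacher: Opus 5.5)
Your proof is correct. It takes a different route from the paper only in that the paper gives no argument: it simply cites Brown's solution to Problem B-62, which it describes as ``roughly'' the converse of $F_n^2+F_{n+1}^2=F_{2n+1}$. Using that reference therefore implicitly requires adapting an external result to the exact form stated here.

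You instead exploit the fact that the target index $2n+1$ is fixed in the statement and that the identity $F_{2n+1}=F_n^2+F_{n+1}^2$ is already available from Theorem~\ref{thm:Vajda2}. After normalizing $i\le j$, the chain $2F_n^2<F_{2n+1}<F_{n+2}^2$ squeezes $j$ to $n+1$, and then $F_i=F_n$ forces $i=n$. Each step is sound. The hypotheses $n,i,j\ge 2$ are used exactly where you say: for strict monotonicity from index $2$ onward, for $F_n<F_{n+1}$, and for injectivity in the last step.

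Your route gives a short, self-contained proof that makes the paper independent of the reference. In the only place the result is used (Lemma~\ref{lmm:tracedet}), the left-hand side is prescribed to be $F_{2n+1}$, so nothing beyond your squeeze is needed there. An outside citation would only be warranted for a stronger statement in which the index of the Fibonacci number on the left is not fixed in advance.
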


Basically, the result below states that the additional constraint $\lambda=1$ in Theorem \ref{thm:classintegral} is enough to force $b,c,p$, and $q$ to be Fibonacci numbers. 

\begin{lmm} \label{lmm:tracedet}
Assume that $\kappa=1$ and that there exists an integer $n\geq 2$ such that $p^2+q^2=F_{2n+1}$. 
Denote by $l$ and $k$ the odd and even integers in $\{n,n+1\}$, respectively. 
Then $q>b, qc-pb=1$, and $t=3$ if and only if $(p,q,c,b)=(F_l, F_k, F_{l-2}, F_{k-2})$. 
\end{lmm}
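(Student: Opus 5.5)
The plan is to prove both directions, using the Lagrange identity \eqref{cor:Legendre}, Brown's theorem (Theorem \ref{thm:Brown}), and the Markov-type equation of Theorem \ref{thm:solveMarkov2}. Here $\kappa=1$ forces $\lambda=1$, since $\lambda^2=\kappa$ in our setting; more to the point, $t$ is the quotient of $b^2+c^2+(p^2+q^2)$ by $bq+pc$.

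\emph{Backward direction.} Set $(p,q,c,b)=(F_l,F_k,F_{l-2},F_{k-2})$. Then $p^2+q^2=F_n^2+F_{n+1}^2=F_{2n+1}$ by Theorem \ref{thm:Vajda2}. The identity $qc-pb=F_kF_{l-2}-F_lF_{k-2}=1$ follows from Vajda's identity (Theorem \ref{thm:Vajda}): write $\{l,k\}=\{n,n+1\}$, pick $u$ so that the two products match $F_{u+v}F_{u+w}-F_uF_{u+v+w}$ with $v=2$, $w=\pm1$ or similar, and track the sign $(-1)^u$ using the parity of $l$ and $k$. For $t=3$, compute $bq+pc=F_{k-2}F_k+F_{l-2}F_l=F_{2n-1}$ by Theorem \ref{thm:Vajda2} with $u=-2$. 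Also $b^2+c^2=F_{n-2}^2+F_{n-1}^2=F_{2n-3}$. The equation $F_{2n-3}+F_{2n+1}=3F_{2n-1}$ is Lemma \ref{lmm:Fiboelementary}. Finally $q>b$ holds because $F_k>F_{k-2}$ for $k\ge 2$; the case $k=2$, $F_0=0$, is fine.

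\emph{Forward direction.} Assume $q>b$, $qc-pb=1$ and $t=3$. The condition $t=3$ reads $b^2+c^2+p^2+q^2=3(bq+pc)$. Combined with $qc-pb=1$, I would split this into two separate equations $q^2+b^2-3qb=\epsilon$ and $p^2+c^2-3pc=-\epsilon$ with $\epsilon=\pm1$, as hinted in Subsection \ref{ss-sec:Markov}. The natural route is to consider the quantity $D=q^2+b^2-3qb$. By the $t=3$ equation, $p^2+c^2-3pc=-D$. We then need $D=\pm1$.

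\textbf{This is the main obstacle.} My first attempt would be the Lagrange identity $(p^2+q^2)(b^2+c^2)=(bq+pc)^2+1$ together with $p^2+q^2=F_{2n+1}$, $bq+pc=F_{2n-1}$ and $b^2+c^2=F_{2n-3}$. Here $bq+pc=F_{2n-1}$ comes from Theorem \ref{thm:solveMarkov} applied as in Lemma \ref{lem:scale-onebis}; this needs $p^2+q^2>bq+pc$, which should follow from $b^2+c^2\ge0$ and the $t=3$ equation. Then $b^2+c^2=3F_{2n-1}-F_{2n+1}=F_{2n-3}$.

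If that does not close the gap, I would instead use Brown's theorem directly. Since $p^2+q^2=F_{2n+1}$ with $p,q\ge1$, Theorem \ref{thm:Brown} gives $\{p,q\}=\{F_n,F_{n+1}\}$, after handling the degenerate values $p$ or $q$ equal to $1=F_1=F_2$ separately. Similarly $b^2+c^2=F_{2n-3}$ gives $\{b,c\}=\{F_{n-2},F_{n-1}\}$, again modulo small cases.

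It then remains to decide which assignment is correct. The condition $qc-pb=1$, evaluated via Vajda's identity with the parity of $n$, selects $q=F_k$ and $p=F_l$ rather than the reverse, since the wrong choice gives $-1$ or a nonunit. The pairing of $b$ with $q$ and $c$ with $p$ is then forced by $q>b$ and $qc-pb=1$, since the alternative pairings yield $qc-pb\in\{-1\}$ or values other than $1$ by Vajda. Finally, the small cases ($n=2$, or entries equal to $0$ or $1$) must be checked by hand.
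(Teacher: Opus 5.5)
Your backward direction is correct and matches the paper's computation (Vajda with $u=k-2$, $v=2$, $w=l-k$, then Theorem~\ref{thm:Vajda2} and Lemma~\ref{lmm:Fiboelementary}). The forward direction has a genuine gap, exactly where you flag it: nothing in the proposal proves $q^2+b^2-3qb=\pm1$, and neither of your routes replaces that step.

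\begin{itemize}
\item \emph{The Lagrange route.} It yields only $p^2+q^2=F_{2n+1}$, $bq+pc=F_{2n-1}$ and $b^2+c^2=F_{2n-3}$. These do not control $q^2+b^2-3qb$.
\item \emph{The inequality $p^2+q^2>bq+pc$.} Your justification runs the wrong way: $t=3$ and $b^2+c^2\ge0$ only give $p^2+q^2\le 3(bq+pc)$. What works is $c\le p$, which follows from $q>b$ and $qc-pb=1$. Then $(bq+pc)^2+1=(p^2+q^2)(b^2+c^2)<(p^2+q^2)^2$.
\item \emph{The Brown route.} This misreads Theorem~\ref{thm:Brown}. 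It only classifies representations $F_{2n+1}=F_i^2+F_j^2$ whose summands are already squares of Fibonacci numbers. It says nothing about an arbitrary decomposition $p^2+q^2=F_{2n+1}$. For example, $F_{15}=610=13^2+21^2=9^2+23^2$. So $p^2+q^2=F_{2n+1}$ alone does not force $\{p,q\}=\{F_n,F_{n+1}\}$, and likewise for $b^2+c^2=F_{2n-3}$.
\end{itemize}
Your final step choosing the assignment therefore rests on an unproved claim.

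The missing idea is a proof that $p,q,b,c$ are Fibonacci numbers before Brown's theorem is invoked. The paper does this as follows. Since $qc-pb=1$, conjugating $\begin{pmatrix}0&-1\\1&3\end{pmatrix}$ by $\begin{pmatrix}q&b\\p&c\end{pmatrix}\in\mathrm{SL}_2(\Z)$ gives the integer matrix
\[ A=\begin{pmatrix} pq+bc-3pb & -(q^2+b^2-3bq)\\ p^2+c^2-3pc & 3qc-pq-bc\end{pmatrix}. \]
It has trace $3$ and determinant $1$, and it is symmetric precisely because $t=3$. Let $\alpha$ be its $(1,1)$-entry and $\beta=p^2+c^2-3pc$. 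Then $\det A=1$ reads $\alpha(3-\alpha)=1+\beta^2$. Since $\alpha(3-\alpha)\le 2$ for every integer $\alpha$, this forces $\beta=\pm1$, and symmetry gives $q^2+b^2-3bq=-\beta$.

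Next one shows $p>c$. If $c>p$, then $1=qc-pb\ge 1+b+p$, which is impossible. If $c=p$, then $p=c=q=1$ and $b=0$, so $n=1$, again a contradiction. Theorems~\ref{thm:solveMarkov} and~\ref{thm:solveMarkov2} then give $(p,q,c,b)=(F_r,F_s,F_{r-2},F_{s-2})$ with $r,s\ge2$. Only at this point does Theorem~\ref{thm:Brown} apply, giving $\{r,s\}=\{n,n+1\}$. Finally, Theorem~\ref{thm:Vajda} with $u=s-2$, $v=2$, $w=r-s$ gives $1=(-1)^s$, so $s=k$ and $r=l$. This last parity step is what your assignment argument was aiming for, but without the Markov step it has nothing to act on.
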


\begin{proof}
Conversely, it is clear that $q>b$. We have $l-k=\pm 1$, and so $F_{l-k}=1$. 
Thus, Theorem \ref{thm:Vajda} applied to $u=k-2, v=2$, and $w=l-k$ gives $qc-pb=F_kF_{l-2}-F_lF_{k-2}=(-1)^k=1$ since $k$ is even. 
On the one hand, Theorem \ref{thm:Vajda2} and Lemma \ref{lmm:Fiboelementary} provide \[ b^2+c^2+p^2+q^2=F_{n-2}^2+F_{n-1}^2+F_n^2+F_{n+1}^2=F_{2n-3}+F_{2n+1}=3F_{2n-1}.\] 
On the other hand, Theorem \ref{thm:Vajda2} with $u=-2$ yields $3(bq+pc)=3(F_nF_{n-2}+F_{n+1}F_{n-1})=3F_{2n-1}$. As $\kappa=1$ by assumption, we get $t=3$ and the converse follows. 

We now show the forward direction. 
Let us consider the matrix
\[
 A=\begin{pmatrix}q&b\\p&c\end{pmatrix}
   \begin{pmatrix}0&-1\\1&3\end{pmatrix}
   \begin{pmatrix} c & -b\\-p&q\end{pmatrix}=\begin{pmatrix} pq+bc-3pb & -q^2-b^2+3bq \\ p^2+c^2-3pc & -pq-bc+3qc\end{pmatrix}.
\]
It has trace $3$, determinant $1$, and is symmetric since $\kappa=1$ and $t=3$. 
If $\alpha= pq+bc-3pb$ and $\beta= p^2+c^2-3pc$, then $\alpha(3-\alpha)=1+\beta^2>0$; whence $\beta=\pm 1$. 
Note that $q^2+b^2-3bq=-\beta=\mp 1$. 

We have $p>c$. 
Indeed, if $c>p$, then $1=qc-pb\geq (b+1)(p+1)-pb=1+b+p$, leading to $b=p=0$, a contradiction. 
If $c=p$, then we must have $c=1$.  
Thus, $\beta=-1, q-b=1$, and $q^2+b^2-3bq=1$, implying $b=0$ and $q=1$. 
Hence, $F_{2n+1}=2$, that is $n=1$, a contradiction. 

 Theorems \ref{thm:solveMarkov} and \ref{thm:solveMarkov2} now provide integers $r$ and $s$ such that $(p,q,c,b)=(F_r, F_s, F_{r- 2}, F_{s-2})$. 
Theorem \ref{thm:Brown} establishes that $(r,s)\in \{(n,n+1), (n+1,n)\}$.
In particular, $r-s=\pm 1$. 
Finally, Theorem \ref{thm:Vajda} with $u=s-2, v=2$, and $w=r-s$ yields $1=F_sF_{r-2}-F_{s-2}F_r=(-1)^s$. 
This forces $s$ to be the even integer belonging to $\{n,n+1\}$, namely $k$.
We derive $r=l$.
\end{proof}

\begin{lmm} \label{lmm:lmmnonneg}
Suppose that $\lambda=1$.
Let $n\geq 3$ be an integer, and let $l$ and $k$ be the odd and even integers in $\{n,n+1\}$, respectively.  
If $(p,q,c,b)=(F_l, F_k, F_{l-2}, F_{k-2})$, then $a, g, h, i$, and $j$ are as in Theorem \ref{thm:classintegral} if and only if \[ \begin{cases} 
a=1+F_{2n+1}-F_{2n-3} \\ 

\\ 

g= F_k^2(F_{k-2}-F_{l-4})+F_l+2F_{k-2}+F_lF_{k-2}^2 \\ 

\\ 

h=F_k(F_{k-2}F_l-1), \; \; \; \; \; \; i=F_l(F_kF_{l-2}-1) \\ 

\\ 

j= F_{l-2}(F_l^2-F_{l-2}F_k+2)+F_l^2F_{k-4}-F_k
\end{cases}. \] 
In addition, all these integers are nonnegative. 
\end{lmm}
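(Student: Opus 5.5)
The plan is to substitute $\lambda=1$ and $(p,q,c,b)=(F_l,F_k,F_{l-2},F_{k-2})$ into the formulas of Theorem \ref{thm:classintegral}, simplify each one with Fibonacci identities, and then check nonnegativity term by term.

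\textbf{The constant $a$.} With $\lambda=1$, Theorem \ref{thm:classintegral} gives $a=1+p^2+q^2-(b^2+c^2)$. Since $\{l,k\}=\{n,n+1\}$, Theorem \ref{thm:Vajda2} yields
\[
p^2+q^2=F_n^2+F_{n+1}^2=F_{2n+1}\quad\text{and}\quad b^2+c^2=F_{n-2}^2+F_{n-1}^2=F_{2n-3}.
\]
Hence $a=1+F_{2n+1}-F_{2n-3}$.

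\textbf{The constants $g,h,i,j$.}
\begin{itemize}
\item For $g$: put $\lambda=1$ in $g=q^2(p-3c+b)+p+2b+pb^2$. Lemma \ref{lmm:Fiboelementary} with index $l$ gives $p-3c=F_l-3F_{l-2}=-F_{l-4}$, so $g=F_k^2(F_{k-2}-F_{l-4})+F_l+2F_{k-2}+F_lF_{k-2}^2$.
\item For $h$: the formula is $h=q(pb+c^2-3pc+p^2)$. Lemma \ref{lmm:tracedet} shows that $qc-pb=1$ and $t=3$ hold for these Fibonacci values. In the proof of that lemma the symmetric matrix $A$ has $(2,1)$-entry $\beta=p^2+c^2-3pc$ equal to $-1$. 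I would re-derive this directly from Theorem \ref{thm:solveMarkov2} / Cassini-type identities. Then $pb+c^2-3pc+p^2=pb-1$, giving $h=F_k(F_{k-2}F_l-1)$.
\item For $i$: similarly $i=p(qc+c^2-3pc+p^2)=p(qc-1)=F_l(F_kF_{l-2}-1)$.
\item For $j$: the formula is $j=(p^2-qc+2)c+p^2(3b-q)-q$. Lemma \ref{lmm:Fiboelementary} with index $k$ gives $3b-q=3F_{k-2}-F_k=F_{k-4}$, which yields the stated $j$.
\end{itemize}
The identity $p^2+c^2-3pc=-1$ can also be checked directly. Vajda's identity (Theorem \ref{thm:Vajda}) applied to $F_l^2-3F_lF_{l-2}+F_{l-2}^2$ gives $-F_lF_{l-4}+F_{l-2}^2=(-1)^{l-1}\cdot(\pm1)$, using $3F_{l-2}=F_l+F_{l-4}$. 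Since $l$ is odd, this sign works out to $-1$.

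\textbf{Nonnegativity.} For $a$, $F_{2n+1}>F_{2n-3}$. For $h$ and $i$, $n\ge3$ gives $F_{k-2},F_{l-2}\ge1$ and $F_l,F_k\ge2$, so $F_{k-2}F_l\ge1$ and $F_kF_{l-2}\ge1$.

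The main obstacle is $g$ and $j$, where $F_{l-4}$ and the term $-F_{l-2}F_k$ can make pieces negative.
\begin{itemize}
\item For $g$: since $|l-k|=1$, we have $F_{l-4}\le F_{k-2}$ whenever $l-4\le k-2$ and $l-4\ge1$, so $g\ge0$. For small $l$, $F_{l-4}$ may be $F_{-1}=1$ or $F_0=0$, and I would check these cases directly.
\item For $j$: compare $F_l^2+2$ with $F_{l-2}F_k$. We have $F_k\le F_{l+1}<3F_{l-1}$ and $F_{l-2}F_k\le F_{l-2}F_{l+1}\le F_l^2+1$ by a Vajda-type identity. Also $F_l^2F_{k-4}\ge F_k$ as soon as $F_{k-4}\ge1$.
\end{itemize}
The remaining cases are small indices ($n=3,4$, where $k-4\le0$, possibly $F_{-2}=-1$), which I would verify by explicit computation.
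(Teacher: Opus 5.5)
Your proposal is correct and follows essentially the paper's proof. You use the same substitutions via Theorem~\ref{thm:Vajda2} and Lemma~\ref{lmm:Fiboelementary}, the same identity $c^2-3pc+p^2=-1$ for $h$ and $i$, and the same nonnegativity argument for $g$ and $j$, with the cases $n=3,4$ (i.e.\ $k=4$, giving $j=0$ and $j=39$) checked by hand.

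The only difference is that you verify $c^2-3pc+p^2=-1$ directly by Vajda's identity with $u=l-4$, $v=w=2$, giving $F_{l-2}^2-F_lF_{l-4}=(-1)^l=-1$. The paper instead quotes Theorem~\ref{thm:solveMarkov}; Theorem~\ref{thm:solveMarkov2}, which you mention, treats the even-index equation and is not the relevant one here.
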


\textit{Proof.}
Theorem \ref{thm:Vajda2} yields $p^2+q^2=F_n^2+F_{n+1}^2=F_{2n+1}$ and $b^2+c^2=F_{n-2}^2+F_{n-1}^2=F_{2n-3}$. 
This gives the expression for $a$. 
Those for $g$ and $j$ come from Lemma \ref{lmm:Fiboelementary} with $n=l$ and $n=k$, respectively. 
As $l$ is odd, Theorem \ref{thm:solveMarkov} gives $c^2-3pc+p^2=-1$. 
The expressions for $h$ and $i$ follow.

Except for $g$ and $j$, it is clear that all constants are nonnegative.
If $l=3$, which only occurs when $n=3$, and so $k=4$, we have $g=6$ and $j=0$. 
Assume that $l\geq 5$. 
The nonnegativity of $g$ comes from the inequality $F_{k-2}\geq F_{l-4}$ since $ l-k= \pm 1$. 
If $k=4$, then $l=5$ and $n=4$, leading to $j=39$. 
If $k>4$, the nonnegativity of $j$ arises from $F_l^2F_{k-4}-F_k\geq F_l^2-F_k> 0$ and 
\begin{flalign*}
 && F_l^2-F_{l-2}F_k\geq F_l^2-F_{l-2}F_{l+1}=F_l^2-(F_l-F_{l-1})(F_l+F_{l-1})=F_{l-1}^2. && \square
 \end{flalign*}

We now prove Theorem \ref{thm:classintegral1}. 
We have $\lambda=1$.
If $\mathcal{R}$ is a simple integral fusion ring, then Theorem \ref{thm:classintegral} gives $p^2+q^2=F_{2n+1}, b < q$, and $qc-pb=1$. 
We get $t=3$ by Lemma \ref{lem:scale-onebis} and $\kappa=1$ by Proposition \ref{prop:firstcharinteger}. Lemmas \ref{lmm:tracedet} and \ref{lmm:lmmnonneg} yield the desired values of $a,\dots,j$, which are all nonnegative.

Theorem \ref{thm:Vajda2} with $u=-2$ gives $bq+pc=F_{k-2}F_k+F_lF_{l-2}=F_{n-2}F_n+F_{n+1}F_{n-1}=F_{2n-1}$.
The formulas regarding the basic Frobenius--Perron dimensions now follow from Proposition \ref{prop:firstcharinteger}.

Conversely, let us show that the ten structure constants are as in the displayed system of Theorem \ref{thm:classintegral}.
The values of $d$ and $f$ give $(q,p)=(F_k,F_l)$.
Theorem \ref{thm:Vajda2} with $u=0$ tells us that $p^2+q^2=F_n^2+F_{n+1}^2=F_{2n+1}$. 
Next, Theorem \ref{thm:Vajda} with $u=k, v=-2$, and $w=2n+1-k$ provides \[b(p^2+q^2)-qF_{2n-1}= F_{k-2}F_{2n+1}-F_kF_{2n-1}=(-1)^k F_{-2}F_{2n+1-k}=-F_l=-p\] since $k$ is even and $F_l=F_{2n+1-k}$.  
Similarly, Theorem \ref{thm:Vajda} with $u=l, v=-2$, and $w=2n+1-l$ gives $c(p^2+q^2)-pF_{2n-1}=q$. 
The expressions for $d,e$, and $f$ are trivial. 
For the remaining constants $g,h,i$, and $j$, this is obvious by Lemma \ref{lmm:lmmnonneg}. 
\qed

\section{Two applications of our study}\label{subsec:frobenius}

Recall that an integral fusion ring is $1$-Frobenius if all its basic Frobenius--Perron dimensions divide
its global Frobenius--Perron dimension.  We also recall that a fusion ring with unit $\mathbf{1}$ is said to be Burnside if $N_X$ has determinant $0$ for any basis element $X$ satisfying $XX^*\neq \mathbf{1}$. 

We conclude this paper by showing that no simple integral fusion ring of rank four is $1$-Frobenius and that a vanishing fusion ring of rank four is Burnside if and only if it is non-simple. 

We start with the $1$-Frobenius case, for which we provide a brief proof: the starting point is that every simple integral fusion ring of rank four is self-dual (Lemma \ref{lmm:selfdual}). 
If such a ring were $1$-Frobenius, then Proposition \ref{prop:determinant-general}, stated in greater generality because it might be of independent interest, would force the multiplication matrix of every nonunit basis element to have determinant zero, conflicting with Corollary \ref{cor:unique-vanishing}. 
 
 \begin{lemma} \label{lmm:selfdual}
Every simple integral fusion ring of rank four is self-dual.
\end{lemma}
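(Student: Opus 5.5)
We argue by contradiction, assuming $\mathcal{F}$ is a simple integral fusion ring of rank four that is not self-dual. Since rank-four fusion rings are commutative, duality is an involution on the three nonunit basis elements. If it is nontrivial, exactly two of them are swapped and one is fixed, so we may label the basis $\{\mathbf{1},W,V,V^*\}$ with $W=W^*$ and $V\neq V^*$. Write $w=\FPdim(W)$ and $v=\FPdim(V)=\FPdim(V^*)$. The paper's earlier sections only cover the vanishing case, so here we cannot assume any multiplication matrix is singular. Instead we work directly with the fusion rules and integrality.

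The first step uses simplicity to rule out invertible elements. If $W$ were invertible, then $\{\mathbf{1},W\}$ would be a proper fusion subring, since $W^2=\mathbf{1}$ is forced by self-duality. If $V$ were invertible, then $\{\mathbf{1},V,V^*\}$ would be a proper subring, because $V^2$ would be an invertible element different from $\mathbf{1}$ and $V$, hence equal to $V^*$ or $W$. So $v,w\geq 2$.

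Next we use the structure of $VV^*$. It contains $\mathbf{1}$ once and has some multiplicity $m$ of $W$. Frobenius reciprocity forces the multiplicities of $V$ and $V^*$ in $VV^*$ to coincide, say $s$, since $N_{V,V^*,V}=N_{V,V^*,V^*}$ by the duality axiom. This gives
\[ v^2=1+mw+2sv. \]
Write also $W^2=\mathbf{1}+\alpha W+\beta(V+V^*)$, where self-duality of $W$ gives equal coefficients on $V$ and $V^*$. This yields $w^2=1+\alpha w+2\beta v$. Finally $WV=\beta W+\gamma V+\delta V^*$, with $\beta=N_{W,V,W}$ by reciprocity, giving $wv=\beta w+(\gamma+\delta)v$.

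The key step is to combine integrality with the rational structure of the character table. The dual-swapping automorphism acts on the characters: $V\mapsto V^*$ corresponds to complex conjugation. Hence some character takes a nonreal value on $V$, while $W$ has real eigenvalues. Mimicking Lemma~\ref{lem:singular-dual}, I would use that $N_V$ is normal (it commutes with $N_V^T=N_{V^*}$) to diagonalize everything simultaneously. There are four characters: the FP one, one real character $\phi$, and a conjugate pair $\psi,\bar\psi$. Integrality plus Galois invariance forces $\phi$ to be rational-valued, hence integer-valued. The values $\psi(V)$ lie in an imaginary quadratic field, and $\psi(W)$ is an integer.

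Orthogonality then gives concrete equations. Column orthogonality yields $\dim\mathcal{F}=\sum_i\FPdim(X_i)^2$, and it also gives that $1+\phi(W)^{2}+2|\psi(W)|^{2}$ and related sums divide $\FPdim(\mathcal{F})$. These formal codegrees must be integers dividing $\FPdim(\mathcal{F})$ in the integral case. I would aim to show that the character $\phi$, or the pair $\psi,\bar\psi$, produces a nontrivial fusion subring. If some character is constant on a subset, the corresponding kernel spans a subring; for example, $\phi(W)=w$ would make $\{\mathbf{1},W\}$ span the sums of a subring.

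The main obstacle is this last step: excluding all integer solutions of the dimension and orthogonality equations without a vanishing hypothesis. I would first try to show that $\psi(W)=w$ is forced by the codegree divisibility combined with $|\psi(W)|\leq w$, which would give a contradiction through the kernel argument. Failing that, I would bound the parameters via $v^2=1+mw+2sv$ and finish with a finite case analysis, in the style of Lemma~\ref{lem:singular-dual}.
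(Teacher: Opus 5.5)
Your setup is sound, and the two dimension equations you derive already suffice. The proposal has a genuine gap, though: it stops before the decisive step. You describe excluding the remaining integer solutions as ``the main obstacle'' and only sketch strategies for it.

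The character-theoretic route you plan for that step is not available as stated. Divisibility of $\FPdim(\mathcal{F})$ by formal codegrees holds for Grothendieck rings of spherical fusion categories, but it is a categorification constraint, not a consequence of the fusion ring axioms. Here $\mathcal{F}$ is an abstract fusion ring. As recalled in the introduction, no simple integral fusion ring of rank four is categorifiable in characteristic zero, so such constraints cannot be invoked in exactly the situation of the lemma. Also, $1+\phi(W)^2+2|\psi(W)|^2$ is not a formal codegree; the one attached to $\phi$ is $1+\phi(W)^2+2\phi(V)^2$. Finally, the ``kernel'' argument, which is supposed to produce a subring from a character constant on a subset, is never specified.

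The missing idea is a short divisibility argument, and it is exactly the paper's proof.
\begin{itemize}
\item From $w^2=1+\alpha w+2\beta v$, every common divisor of $v$ and $w$ divides $1$, so $\gcd(v,w)=1$.
\item From $wv=\beta w+(\gamma+\delta)v$ you get $v\mid\beta w$, hence $v\mid\beta$.
\item The same equation gives $(v-\beta)w=(\gamma+\delta)v\geq 0$, hence $\beta\leq v$.
\item Simplicity forces $\beta>0$; otherwise $W^2=\mathbf{1}+\alpha W$ and $\{\mathbf{1},W\}$ spans a proper subring.
\item Therefore $\beta=v$, and then $\gamma=\delta=0$.
\end{itemize}
By Frobenius reciprocity, the multiplicity of $W$ in $VV^*$ is $N_{W,V,V}=\gamma$, and the multiplicity of $W$ in $V^2$ is $N_{W,V,V^*}=\delta$. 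Hence $VV^*=\mathbf{1}+s(V+V^*)$ and $V^2\in\mathbb{Z}_{\geq0}V+\mathbb{Z}_{\geq0}V^*$. So $\{\mathbf{1},V,V^*\}$ spans a proper fusion subring, a contradiction.

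No normality, character table, or case analysis is needed. Your preliminary reduction to $v,w\geq 2$ is also unnecessary, and it contains a slip: if $V$ is invertible with $V^2=W$, the proper subring is $\{\mathbf{1},W\}$, not $\{\mathbf{1},V,V^*\}$.
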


\begin{proof}
Suppose otherwise. Up to isomorphism, the distinguished basis is
$\{\mathbf{1},x,x^*,y\}$, with $x\ne x^*$ and $y=y^*$.
Put $a=\operatorname{FPdim}(x)=\operatorname{FPdim}(x^*)$ and $b=\operatorname{FPdim}(y)$. 
Note that both $a$ and $b$ are positive integers since the fusion ring is integral. 

As explained at the beginning of the proof of Lemma \ref{lem:singular-dual}, we have \[ N_x=\begin{pmatrix}
 0&1&0&0\\
 0& \alpha &\beta & \gamma \\
 1& \alpha & \alpha & \delta \\
 0& \delta & \gamma & \varepsilon
 \end{pmatrix}, \; \; N_{x^*}=N_x^T, \; \; \text{and} \; \; N_y=\begin{pmatrix} 
0 & 0 & 0 & 1 \\ 
0 & \delta & \gamma & \varepsilon \\ 
0 & \gamma & \delta & \varepsilon \\ 
1 & \varepsilon & \varepsilon & \zeta
 \end{pmatrix},  
\] where all entries are nonnegative integers. 
We have $\varepsilon>0$, since otherwise $\{1,y\}$ would span a proper fusion subring, contradicting the simplicity assumption. 

The dimension equations in particular give $ab=(\delta+\gamma)a+\varepsilon b$ and $b^2= 1+2\varepsilon a+\zeta b$. The latter implies that $a$ and $b$ are coprime, and the former therefore claims that $a$ divides $\varepsilon$. 
The relation $(a-\varepsilon)b=(\delta+\gamma)a\geq 0$ leads to $a\geq \varepsilon$, and so $a=\varepsilon$. 
As a result, we must have $\gamma=\delta=0$. 

Consequently, $x^2=\alpha x+\beta x^*, (x^*)^2=\beta x+ \alpha x^*,$ and $xx^*=1+\alpha(x+x^*)$, meaning that $\{1, x, x^*\}$ spans a proper fusion subring, a contradiction. 
\end{proof}
 
\begin{proposition} \label{prop:determinant-general}
Let $\mathcal F$ be a simple integral fusion ring of rank $r\geq3$ with basis $\{\mathbf{1}, X_1,\dots, X_{r-1}\}$. 
Let $X_i$ be a self-dual basis element. 
Put $x=\FPdim(X_i)$ and $D=\FPdim(\mathcal F)$. Then $x\gcd(x,D)$ divides $\det(N_{X_i})$ and  $\vert\det(N_{X_i})\vert<x^{r-2}$.
Moreover, if $r=4$ and $x$ divides $D$, then $\mathrm{det}(N_{X_i})=0$.
\end{proposition}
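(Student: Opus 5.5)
The plan is to work spectrally with $N:=N_{X_i}$. Since $X_i$ is self-dual, $N^T=N_{X_i^*}=N$, so $N$ is a symmetric nonnegative integer matrix. Its eigenvalues are real and it is diagonalizable over $\mathbb R$. Simplicity of $\mathcal F$ should make $N$ irreducible: otherwise the basis elements reachable from $\mathbf{1}$ through repeated multiplication by $X_i$ would span a proper nontrivial fusion subring. I will need to check that this subring is closed under products, not just under multiplication by $X_i$. Granting irreducibility, Perron--Frobenius says that $x$ is a simple eigenvalue of $N$, with eigenvector the dimension vector $\mathbf d=(1,d_1,\dots,d_{r-1})^T$, where $d_j=\FPdim(X_j)$ and $\mathbf d^T\mathbf d=D$.

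\emph{Divisibility.} Since $x\in\Z$, the characteristic polynomial factors as $\chi(T)=(T-x)Q(T)$ with $Q\in\Z[T]$ monic. Then $\det(N)=\pm xQ(0)$. Because $x$ is a simple eigenvalue, $xI-N$ has rank $r-1$ and its adjugate has rank one. For symmetric $N$ this gives
\[
\operatorname{adj}(xI-N)=\frac{Q(x)}{D}\,\mathbf d\mathbf d^T .
\]
The adjugate has integer entries and its $(1,1)$-entry is $Q(x)/D$ (as $d_{\mathbf 1}=1$), so $D\mid Q(x)$. Also $Q(x)\equiv Q(0)\pmod x$. Together these give $\gcd(x,D)\mid Q(0)$, hence $x\gcd(x,D)\mid \det(N)$.

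\emph{The bound.} Write $\det(N)=x\prod_{j=2}^{r}\mu_j$, where $\mu_2,\dots,\mu_r$ are the other eigenvalues, so I must show $\prod_j|\mu_j|<x^{r-3}$. First I will try to bound $\sum_j\mu_j^2=\operatorname{tr}(N^2)-x^2=\sum_{j,k}N_{ijk}^2-x^2$ using Frobenius reciprocity and the dimension equations, and then apply AM--GM to $\prod_j\mu_j^2$. If this is too weak, I will instead exploit the special row of $N$ indexed by $\mathbf 1$, which has a single entry $1$. Expanding $\det(N)$ along this row and then along the corresponding column reduces it, up to sign, to an $(r-2)\times(r-2)$ principal minor. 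That minor is obtained by deleting the rows and columns of $\mathbf 1$ and $X_i$. By Cauchy interlacing, its eigenvalues lie strictly inside $(-x,x)$, at least when $N$ is irreducible and not bipartite, and this gives $|\det(N)|<x^{r-2}$ directly. The main obstacle is this strictness and the interlacing bookkeeping. I expect to use that the Perron eigenvector has all entries positive, so deleting coordinates strictly lowers the spectral radius of the compressed matrix. I also expect to rule out $-x$ as an eigenvalue of the minor by the same Perron argument applied to $|{\cdot}|$.

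\emph{The case $r=4$.} If $x\mid D$, then $\gcd(x,D)=x$, so $x^2\mid\det(N)$. The bound gives $|\det(N)|<x^{r-2}=x^2$. Hence $\det(N)=0$.
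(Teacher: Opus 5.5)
Your proposal is correct and follows the paper's proof essentially step by step. You derive irreducibility of $N_{X_i}$ from simplicity, use the Thompson--McEnteggert formula $\operatorname{adj}(xI-N)=\frac{Q(x)}{D}\mathbf d\mathbf d^T$ to get $D\mid Q(x)$ and hence $x\gcd(x,D)\mid\det N$, expand along the row of $\mathbf 1$ to get $\det N=-\det C$, and invoke the strict Perron--Frobenius monotonicity $\rho(C)<x$. The interlacing step and the ``not bipartite'' caveat are unnecessary: $C$ is nonnegative, so every eigenvalue of $C$ has modulus at most $\rho(C)<x$, which gives $|\det C|\le\rho(C)^{r-2}<x^{r-2}$ whatever the periodicity of $N$, exactly as in the paper.
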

\begin{proof}
By relabelling the basis elements if needed, we can assume that $i=1$.  
It is clear that $N_{X_1}$ is both symmetric and nonnegative. 
Since $X_1$ is self-dual and $\mathcal{F}$ is simple, the fusion ring generated by $X_1$ is the whole fusion ring $\mathcal{F}$, that is, the graph of $N_{X_1}$ is connected. 
It follows from \cite[Theorem~1.7.15]{May} that $N_{X_1}$ is irreducible, and so the eigenvalue $x$ is simple \cite[Theorem 1.9.6]{May}.

By definition, $x$ is a root of the characteristic polynomial $\chi$ of $N_{X_1}$; whence $\chi(T)=(T-x)Q(T)$ for some polynomial $Q\in \Z[T]$ satisfying $\chi'(x)=Q(x)$. 
 Also, the column vector \[
 \mathbf{v}=(1, \FPdim(X_1),\ldots,\FPdim(X_{r-1}))^T \] satisfies $N_{X_1}\mathbf{v}=x\mathbf{v}$ and $\mathbf{v}^T\mathbf{v}=D$. 
 Let $I$ be the identity matrix of size $r$. 
 
 Since $N_{X_1}$ is a real symmetric matrix, Thompson-McEnteggert formula \cite{TM, CZ} tells us that the adjugate of the matrix $xI-N_{X_1}$ is $(Q(x)/D) \mathbf{v}\mathbf{v}^T$.
The adjugate of an integer matrix is an integer matrix.
The $(1,1)$-entry of the matrix $\mathbf{v}\mathbf{v}^T$ being $1$, we deduce that $D$ divides $Q(x)$.
Since $x$ also divides $Q(x)-Q(0)$, we infer that $\gcd(x,D)$ divides $Q(0)$. 
The first claim of the statement now follows from the equalities $(-1)^r\det(N_{X_1})=\det(-N_{X_1})=-xQ(0)$.

There is a row vector $w$ with $r-2$ entries and a square matrix $C$ of size $r-2$ such that
\[
 N_{X_1}=\begin{pmatrix}
 0&1&0\\ 1&N_{111}&w\\ 0&w^T&C
 \end{pmatrix}.\] Note that $\det(N_{X_1})=-\det C$. 
 Let $B$ be the matrix obtained from $N_{X_1}$ by replacing all entries outside the block $C$ with $0$.
Since $N_{X_1}$ is irreducible, \cite[Theorem 1.9.7]{May} states that the spectral radius $\rho$ of $B$, which equals that of $C$, is less than $x$. 
The determinant of a square matrix being the product of its eigenvalues counted with multiplicity, we get 
$\vert \det(N_{X_1})\vert= |\det C|\leq\rho^{r-2}<x^{r-2}$.
The second assertion follows since a nonzero multiple of $x^2$
has absolute value at least $x^2$.
\end{proof}

\begin{rem}
The proof uses simplicity only to make $N_{X_1}$ irreducible.
The proposition therefore holds for any integral fusion ring with
an irreducible multiplication matrix at a self-dual nonunit basis
element. 
\end{rem}

We now prove Theorem \ref{thm:frobenius-obstruction}.
First, $\mathcal{F}$ is self-dual by Lemma \ref{lmm:selfdual}. 
Assume that there is a nonunit basis element $X$ such that $\FPdim(X)$ divides $\FPdim(\mathcal{F})$. 
Then $\det(N_X)=0$ according to Proposition \ref{prop:determinant-general}, and so $\mathcal{F}$ is a simple integral self-dual vanishing fusion ring. 
Our study then shows that we are in the setting of Subsection \ref{ss-section:integral case}. 
The first part of the theorem now arises from Corollary \ref{cor:unique-vanishing} and the second from Corollary \ref{cor:basicdividesglobal} and Theorem \ref{thm:classintegral1}. \qed \\

We now prove Theorem \ref{thm:Burnside}. 
Conversely, up to isomorphism, the non-simple vanishing fusion rings of rank four are given by Lemmas \ref{lem:nonself} and \ref{lem:fusionringselfdeg}. 
Those in Lemma \ref{lem:nonself} are Burnside since $N_X$ has determinant zero, while $N_Y$ and $N_Z$ are permutation matrices. 
Those in Lemma \ref{lem:fusionringselfdeg} are also Burnside since $N_X$ and $N_Z$ have determinant zero, while $N_Y$ is a permutation matrix.

Suppose that $\mathcal{R}$ is a nonpointed Burnside fusion ring of rank four. 
It is therefore vanishing and we assume by contradiction that it is simple. 
Our study then claims that $\mathcal{R}$ is self-dual and $df>0$. 
Recall that $0=\det(N_X)=e^2-df$ and that $b$ and $c$ are coprime by Theorem \ref{thm:main} $(2a)$. 

The matrices \eqref{eq:matrices} show that $Y^2$ and $Z^2$ are different from $\mathbf{1}$.  
By definition of the Burnside property, we must have $\det(N_Y)=\det(N_Z)=0$. 
Swapping $X$ and $Y$, respectively $Z$, sends $(a,b,c,d,e,f,g,h,i,j)$ to $(g,d,h,b,e,i,a,c,f,j)$, respectively $(j,i,f,h,e,c,g,d,b,a)$.
The same argument, with $X$ replaced by $Y$ or $Z$, yields $e^2=df=bi=ch$ and $d$ and $h$, as well as $i$ and $f$, are coprime.  
It then follows that $b$ divides $h$, which divides $f$, which divides $b$; whence $b=h=f$. 
Similarly, $c$ divides $i$, which divides $d$, which divides $c$; whence $c=i=d$. 

Since $b$ and $c$ are coprime, we have $p=\sqrt{b}$ and $q=\sqrt{c}$. 
It arises from Theorem \ref{thm:main} $(2a)$ that $1=qc-pb=q^3-p^3=(q-p)(q^2+qp+p^2)$, a contradiction since $p$ and $q$ are positive integers. 
 \qed

\subsection*{Acknowledgments}
The authors thank Dmitry Rybin for sharing an exchange with GPT-Pro via the CUHK-Shenzhen AI Math Problems database.

S\'ebastien Palcoux is supported by the National Natural Science
Foundation of China (NSFC), Grant no.~12471031.
Arnaud Plessis is supported by the Beijing Municipal Natural Science
Foundation (No.~IS25018).
GPT-6 Astra was used as a research assistant. The authors take full
responsibility for the final manuscript.

\end{document}